\documentclass[oneside]{amsart}
\usepackage{geometry}
\usepackage{amssymb, stmaryrd}
\usepackage{esint}
\usepackage{hyperref}
\usepackage{mathrsfs}
\usepackage[normalem]{ulem}
\usepackage{mathtools}
\usepackage{centernot}
\usepackage{bbm}
\usepackage{bm}
\usepackage{dirtytalk}
\usepackage[parfill]{parskip}
\usepackage[colorinlistoftodos,textsize=tiny]{todonotes}
\usepackage[utf8]{inputenc}
\usepackage{algorithm}
\usepackage{algpseudocode}

\usepackage[numbers,sort&compress]{natbib}

\newcommand{\R}{\mathbb{R}}	
\newcommand{\N}{\mathbb{N}}	

\newcommand{\metric}{\mathsf{d}}

\newcommand{\verticalvec}{\mathbf{v}}
\newcommand{\unitnormal}{\boldsymbol{n}}
\newcommand{\cellstrucP}{\mathcal{P}}
\newcommand{\cellstrucQ}{\mathcal{Q}}
\newcommand{\cellstrucW}{\mathcal{W}}
\newcommand{\partialmatching}{\mathfrak{R}}
\newcommand{\Fill}{\mathsf{F}}

\def\Xint#1{\mathchoice
{\XXint\displaystyle\textstyle{#1}}%
{\XXint\textstyle\scriptstyle{#1}}%
{\XXint\scriptstyle\scriptscriptstyle{#1}}%
{\XXint\scriptscriptstyle\scriptscriptstyle{#1}}%
\!\int}
\def\XXint#1#2#3{{\setbox0=\hbox{$#1{#2#3}{\int}$ }
\vcenter{\hbox{$#2#3$ }}\kern-.6\wd0}}

\def\dashint{\Xint-}

\DeclareMathOperator{\VolEx}{\mathscr{E}}
\DeclareMathOperator{\ProjE}{\mathscr{pE}}

\DeclareMathOperator{\variation}{\mathbf{V}}
\DeclareMathOperator{\essvariation}{\mathbf{ess V}}

\DeclareMathOperator{\PM}{\mathsf{PM}}
\DeclareMathOperator{\IM}{\mathsf{IM}}

\DeclareMathOperator{\BV}{\mathrm{BV}}
\DeclareMathOperator{\PC}{\mathrm{PC}}
\DeclareMathOperator{\Iso}{\mathrm{Iso}}
\DeclareMathOperator{\cyl}{\mathfrak{C}}
\DeclareMathOperator*{\aplim}{ap\,lim}

\newcommand{\allardell}{\,\hbox{\vrule height 6pt depth 0pt
		\vrule height 0.4pt depth 0pt width 5pt}\,}

\newcommand{\paren}[1]{\left( #1 \right)}
\newcommand{\brac}[1]{\left[ #1 \right]}

\newcommand{\abs}[1]{\left\vert#1\right\vert}

\newcommand{\set}[1]{\left\{#1\right\}}

\newcommand{\interior}[1]{%
  {\kern0pt#1}^{\mathrm{o}}%
}

\newcounter{condition}
\renewcommand{\thecondition}{\Alph{condition}}

\DeclareMathOperator{\im}{im}

\newtheorem{theorem}{Theorem}
\newtheorem{remark}[theorem]{Remark}
\newtheorem{lemma}[theorem]{Lemma}
\newtheorem{prop}[theorem]{Proposition}
\newtheorem{definition}[theorem]{Definition}

\newtheorem{corollary}[theorem]{Corollary}

\usepackage{tikz}
\usepackage{tikz-cd}
\usetikzlibrary{arrows.meta,calc,patterns.meta}

\title{The Volume Exchange Metric}
\author{Enrique Alvarado}
\address{Department of Mathematical Sciences, The University of Texas at El Paso, El Paso, TX 79968, USA} 
\author{Joseph Jung}
\address{Department of Mathematical Sciences, George Mason University, Fairfax, VA 22030, USA}
\author{Jeremy Mason}
\address{Department of Materials Science and Engineering, University of California, Davis, Davis, CA 95616, USA}
\author{Shrunal Pothagoni}
\address{Department of Mathematical Sciences, George Mason University, Fairfax, VA 22030, USA}
\author{Benjamin Schweinhart}
\address{Department of Mathematical Sciences, George Mason University, Fairfax, VA 22030, USA}
\date{\today}

\begin{document}

\begin{abstract}
The volume exchange metric between cell structures embedded in Euclidean space is introduced.
Intuitively, it measures the minimum amount of volume that must be swept out to continuously transform one cell structure into another.
The main result is that the volume exchange metric coincides with a discrete analogue defined as a minimum over matchings between cells.
Algorithms are described to compute the discrete volume exchange metric and illustrate its effectiveness at distinguishing polycrystalline microstructures in computational experiments. 
\end{abstract}

\maketitle 
%\tableofcontents

\section{Introduction}

\begin{figure}[b]
\center
		\includegraphics[height=0.3\textwidth]{%
			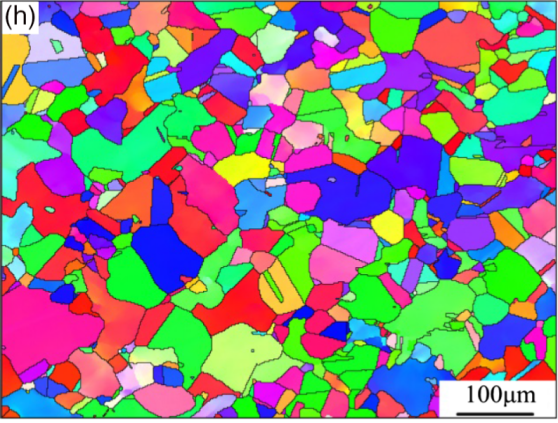} \qquad
	\label{fig:histopathology}{%
		\includegraphics[height=0.3\textwidth]{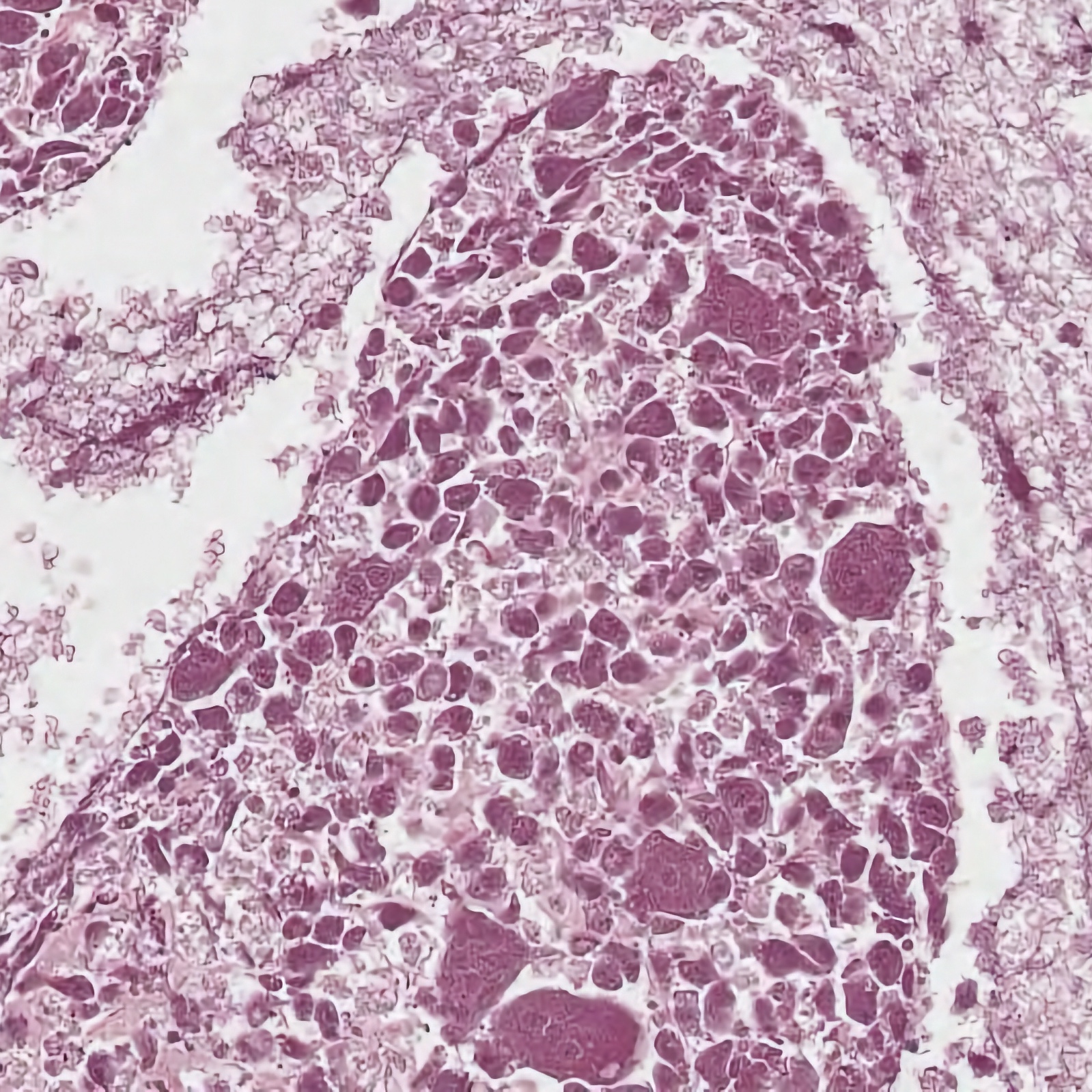}}
\caption{\label{fig:micrograph} (a) A micrograph of a metal showing the polycrystal microstructure, reprinted from Ref.\ \cite{2019jia}, Fig.\ 4, with permission from John Wiley and Sons.
(b) A histopathology slide of cancerous bone tissue~\cite{Leavey2019-js}.}
\end{figure}

Cell structures appear throughout the natural world.
Examples include the decomposition of a polycrystalline material into its constituent grains (Fig.\ \ref{fig:micrograph}a) and the representation of a biological tissue as a union of cells (Fig.\ \ref{fig:micrograph}b).
% See Figure~\ref{fig:micrograph} for two illustrations.
Recent advances in microscopy techniques have made possible the collection of large datasets of two-dimensional images of cell structures, and for some specific applications three-dimensional images are available as well \cite{vasarhelyi2020microcomputed,xu2021open,hu2024situ,oh2025taking}.
% More recently, three-dimensional scans have become available for specific applications.
The underlying physical processes are often stochastic though, and the analysis of the resulting images continues to be challenging.
Specifically, the scarcity of established techniques to compare such images, or even to identify natural classes, has significant practical consequences.
% These datasets present many natural problems for machine learning and artificial intelligence.
For example, it is difficult to evaluate whether a simulated material microstructure is a reasonable approximation of a manufactured one without the ability to quantitatively measure the extent of their differences.
% One might ask whether a computer vision model can be trained to distinguish micrographs of polycrystalline materials produced by different processes, or a manufactured microstructure from a simulation of one.
As a second example, there would be considerable implications for human health if a computer vision model could be trained on the histopathological classification problem to distinguish images of healthy and cancerous biological tissues.
% Another example is the histopathological classification problem: how can one distinguish images of healthy and cancerous biological tissues?

Previous literature on this subject has generally either (i) treated the image as an array of greyscale or colored pixels and applied general image analysis techniques, or (ii) compared ad hoc selections of geometric statistics for individual or small groups of cells.
% Most previous computational studies of cell structures either represented micrographs as a preprocessed images --- that is, as arrays of pixels --- or focused on geometric statistics of single grains or cells.
While the former does not require expert knowledge and can be highly accurate for specific applications \cite{holm2020overview,LIU2021104523}, the performance rapidly degrades for structures outside the support of the training set, and the analysis usually does not indicate which image features are most significant.
The latter instead uses expert knowledge to identify the geometric features most relevant to the properties of interest \cite{groeber2007framework,caicedo2017data}, but this effectively prohibits meaningful comparison with structure classes for which the selected features are irrelevant or undefined \cite{furrer2019industrial}.
It is also significant that both approaches are practically limited to comparatively small sets of cell structures for which the extent of geometric variation is known beforehand.
This can increase the difficulty of analyzing newly-discovered cell structures, discourage the intentional design and exploration of cell structures outside of well-established classes, and promote the fragmentation of the physical science communities concerned with such systems.
This situation has motivated several recent studies that have sought to develop comprehensive and general geometric summaries of the overall cell arrangement, ideally providing sufficient information to reliably identify structural differences and predict system properties while remaining flexible enough to be useful for the analysis of general cell structures.
% More recently, a few studies have sought to develop geometric summaries of arrangements of cells.
% The goal of these methods is to retain sufficient information to capture differences between cell structures and predict physical properties while remaining concrete and geometric enough to yield explainable results.

Here we introduce two dissimilarities between cell structures called the one-to-one volume exchange metric and the many-to-many volume exchange dissimilarity.
Intuitively, they measure the minimum amount of volume that must be swept out by the cell boundaries to transform one structure into the other, as for example in Fig.~\ref{fig:volume_flux}.
The one-to-one volume exchange metric additionally requires that each cell in one structure either transforms into exactly one cell in the other structure or disappears in the process.
The many-to-many volume exchange dissimilarity instead allows initially non-adjacent cells to merge during the transformation.
% Both are physically motivated: in the former, we view each cell as a distinct species that cannot be converted into another, whereas the latter assumes only that adjacent cells are distinct.
% In materials science, one might imagine that different inclusions in a large grain might merge to form a slightly larger inclusion.
% The advantage of these metrics over e.g. the Hausdorff metric is...
One advantage that these have over alternatives like the Hausdorff metric is that they are less sensitive to the geometric noise that is often introduced either by the imaging method or by the cell segmentation process.

\begin{figure}[t]
\center
\includegraphics[height=0.30\textwidth]{%
			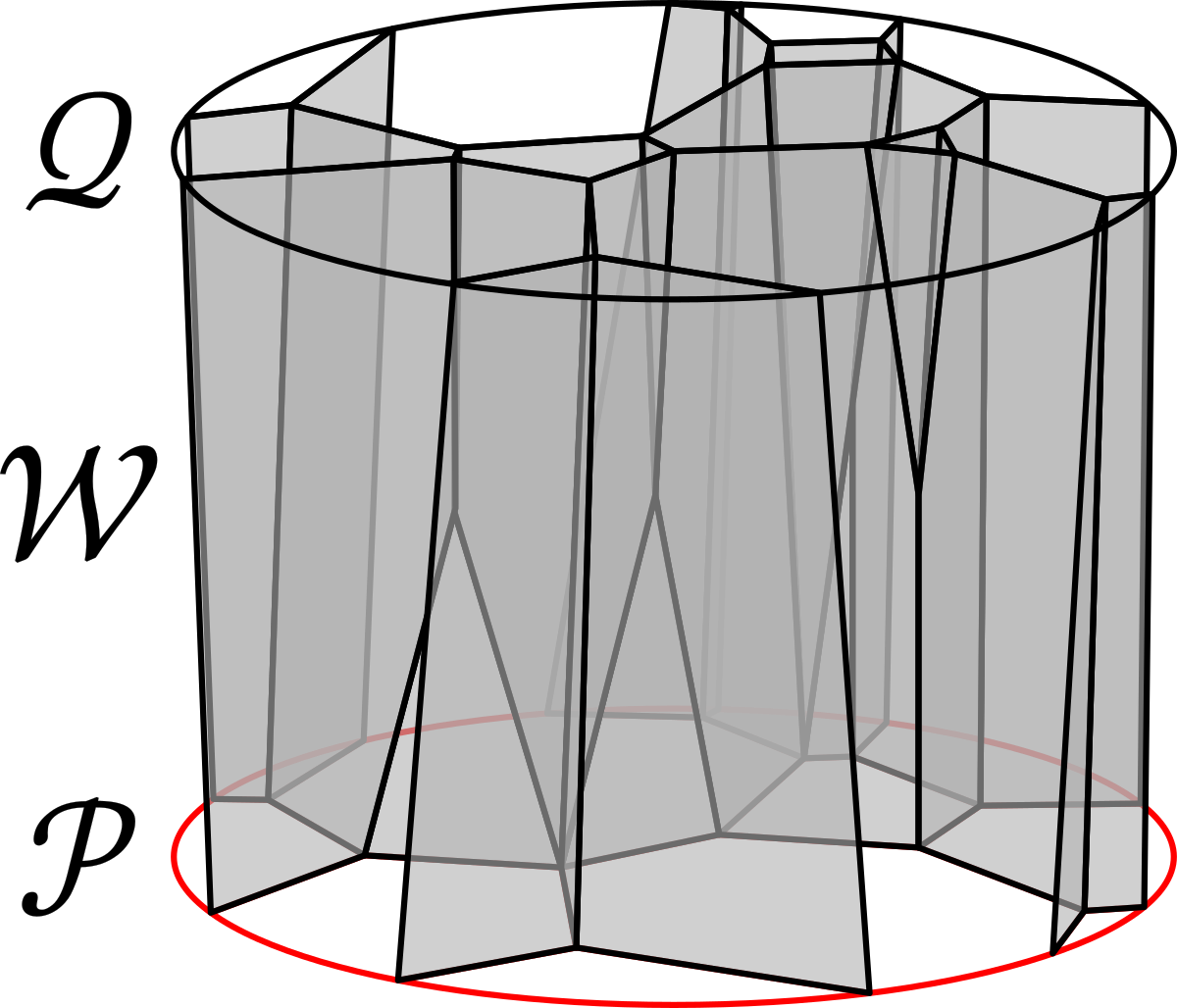} \quad
\includegraphics[height=0.30\textwidth]{%
			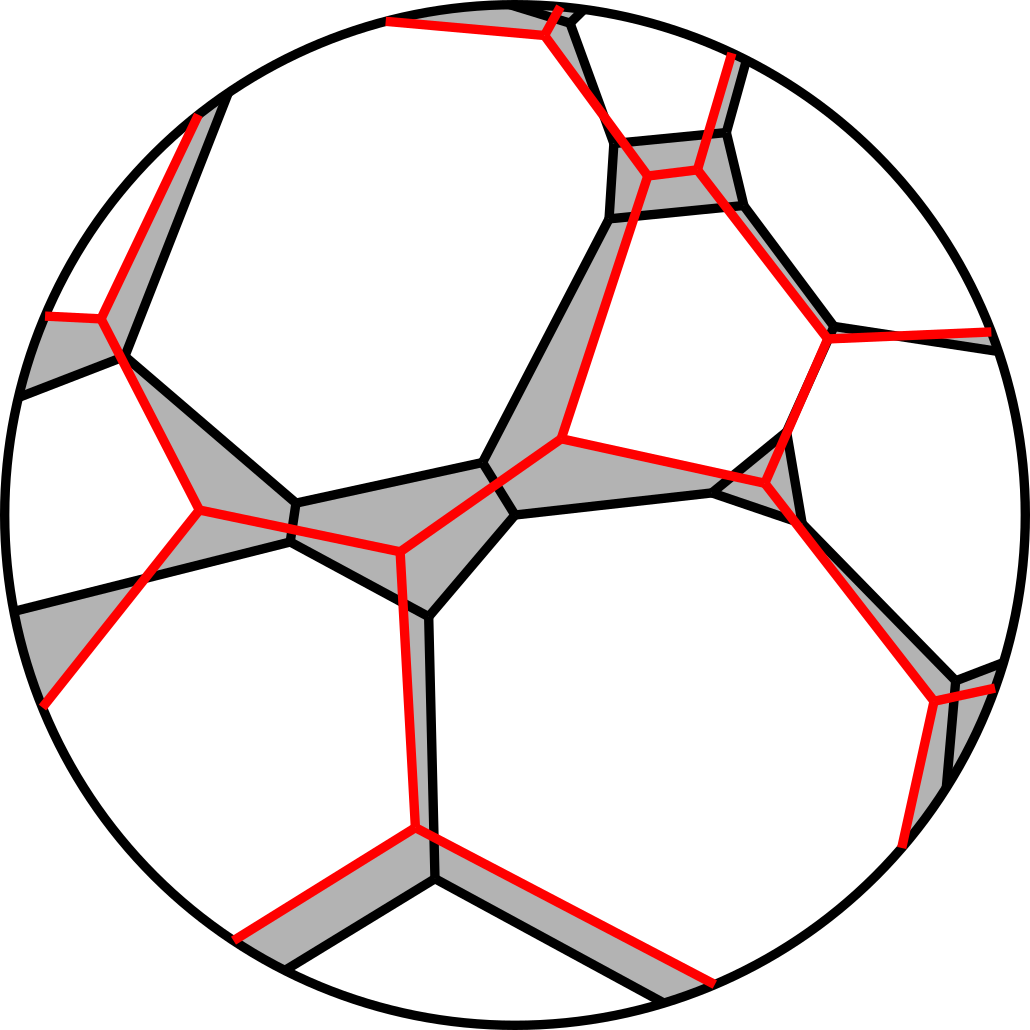}
\caption{\label{fig:volume_flux}
(Left) A cellular filling $\cellstrucW$ between $\cellstrucP$ and $\cellstrucQ$ that realizes the minimal one-to-one volume exchange.
(Right) The projection into the plane %$\R^2 \times 0$
shows the volume exchange distance between $\cellstrucP$ and $\cellstrucQ$ as the area in grey.}
\end{figure}

The underlying physical motivation though is that there are many examples of cell structures for which the transport of volume across the cell boundaries is the slowest (rate-limiting) process involved in changing the cell configuration, and geometric properties that are the slowest to change are likely to be highly significant to physical systems.
With reference to Fig.\ \ref{fig:micrograph}, cell boundary motion in a polycrystal occurs precisely by atoms leaving one grain, passing through the boundary, and joining the crystal lattice of the adjoining grain.
A biological cell of fixed volume is easily able to redistribute the contents of the cell, but the volume changes that accompany general cell configuration changes involve a slower process of water transport across the cell membrane.
Gas transport through the liquid boundaries of the bubbles in a dry soap foam is so slow relative to other processes that in the limit the cell configuration is entirely specified by the topology of the cell boundary network and the cell volumes.
These examples suggest that meaningful dissimilarities could be based directly on the amount of volume exchanged by adjoining cells rather than on, e.g., changes to the area of the cell boundaries or the amount of displacement of the cell boundaries.

The dissimilarities can also be motivated by considering distances on labeled cell structures, with there being many applications for which there is a natural set of labels in a metric space $\paren{Y,\delta}$.
% or could be labeled by the grain's phase identity which is discrete and of which there are usually only a handful.
For example, in materials science the grains of a polycrystalline material could be labeled by a crystal orientation that indicates the misorientation of the crystal lattice relative to a suitable reference, i.e., an element of the quotient $Y = SO(3) / H$ of the three-dimensional rotation group $SO(3)$ by the point symmetry group $H$ of the crystal lattice \cite{bunge2013texture}.
Such a labeled cell structure $\cellstrucP$ in an ambient space $\Omega\subset \R^n$ can be identified with a piecewise constant function $p\colon\Omega\to Y$  whose level sets are the cells.
A natural distance between cell structures $\cellstrucP$ and $\cellstrucQ$ is then given by the $L^1$ norm of $\delta\paren{p,q}$.
However, in the absence of specific knowledge about the cell labels or a natural distance between them, minimal assumptions could be that labels on adjacent cells are distinct and that distinct labels are a unit distance apart.
Given two cell structures $\cellstrucP$ and $\cellstrucQ$, the many-to-many volume exchange dissimilarity is then the minimal $L^1$ distance between $g_\cellstrucP$ and $g_\cellstrucQ$ over all possible labeling $g_{\cellstrucP}$ and $g_{\cellstrucQ}$ which assign distinct labels to adjacent cells.
The one-to-one volume exchange metric---a true metric which is more efficient to compute in practice---is obtained by adding the further assumption that the labels are distinct for each cell in a cell structure regardless of adjacency.

We now formally define cell structures, transformations between them called cellular fillings, and the volume exchange metric and dissimilarity. 
A \emph{cell structure} in $\Omega\subset \R^{n}$ is a countable partition $\cellstrucP$ of $\Omega$ so that each cell $E \in \cellstrucP$ has positive Lebesgue measure, and the collection has finite interfacial surface energy (see Definition~\ref{defn:Caccioppoli} for a precise definition). We assume that $\Omega$ is open bounded open and connected, and we consider two cell structures to be equal if they differ up to a rearrangement of a set of measure zero.
We do not make assumptions about the topology of cells, but call them ``cells'' because of the proposed applications.

We introduce two different ways to ``fill'' the space between two $n$-dimensional cell structures $\cellstrucP, \cellstrucQ$ defined on the ends of a cylinder $\Omega\times \paren{0,1}$. 
Either notion defines an $(n+1)$-dimensional cell structure $\cellstrucW$ of $\Omega\times \paren{0,1}$.
To avoid pathologies, we introduce the notions of essential interior and essential boundary from geometric measure theory. 
With the notion of the \textit{\textbf{density of $E\subset \mathbb{R}^n$ at} $x$, 
\[
\Theta(E, x) \coloneq \lim_{r\to 0+}\frac{\mathscr{L}^n\paren{E\cap B(x, r)}}{\mathscr{L}^n\paren{B(x, r)}},
\]
}
we can define the \textbf{\textit{essential interior $E^{(1)}$ of}} $E$ consists of the set of points where the density of $E$ is $1$, and the \textbf{\textit{essential boundary $\partial^{\ast} E$ of}} $E$ consists of the points where the density of $E$ is not in $\{0, 1\}$.
In particular, if we define adjacency of cells via their essential boundaries, the adjacency is unchanged by rearranging a set of Lebesgue measure zero (for instance, adding a path connecting one cell to another).  

% The intersection  $\cellstrucW\cap \paren{\Omega\times t}$ is a cell structure on $\Omega\times t$ such that for each cell $E\in \cellstrucW\cap \paren{\Omega\times t}$ there exists a cell $E'\in \cellstrucW$ so that $E\subset \partial^{\ast}E'\cap \paren{\Omega\times t}$ and $\mathscr{L}^{n}\paren{\partial^{\ast}E'\cap \paren{\Omega\times t}\setminus E}=0.$ That is, we take all non-empty sets of the form  $\partial^{\ast}E\cap \paren{\Omega\times t}$ as $E$ ranges over the cells of $\cellstrucW$ and remove potential overlaps on their boundaries. 
%On the other hand,
%the type 2 intersection $\cellstrucW\cap_2 \paren{\Omega\times t}$ is the collection of cells obtained by splitting the cells of the type $1$ intersection into connected components. 
A \emph{one-to-one cellular filling} between cell structures $\cellstrucP$ and $\cellstrucQ$ is an $(n+1)$-dimensional cell structure $\cellstrucW$ in $\Omega\times \paren{0,1}$ so that for each cell $W \in \cellstrucW$ with $\mathscr{H}^n(\partial^{\ast} W\cap (\Omega\times 0)) > 0$, there exists a cell $E \in \cellstrucP$ such that 
\[
\mathscr{H}^n((E\times 0)\; \triangle\; \partial^{\ast} W \cap (\Omega\times 0)) = 0,
\]
and similarly for a cell $W \in \mathcal{W}$ with $\mathscr{H}^n(\partial^{\ast} W\cap (\Omega\times 1)) > 0$. 
% $\cellstrucW\cap_1 \Omega\times\set{0}=\cellstrucP$ and $\cellstrucW\cap_1 \Omega\times\set{1}=\cellstrucQ.$

Call two cells of a cell structure $\cellstrucP$ adjacent if the intersection of their essential boundaries has positive $(n-1)$-dimensional Hausdorff measure. 
A \textit{\textbf{coarsening}} of a cell structure $\cellstrucP$ is a cell structure $\cellstrucP'$ so that each $E' \in \cellstrucP'$ is a union of non-adjacent cells of $\cellstrucP.$   
A \emph{many-to-many cellular filling} between cell structures $\cellstrucP$ and $\cellstrucQ$ in $\Omega$ is a one-to-one cellular filling between coarsenings of $\cellstrucP$ and $\cellstrucQ.$ See Figure~\ref{fig:volume_flux_manymany}. 

Denote the collections of such fillings by $\Fill^{1}\paren{\cellstrucP,\cellstrucQ}$ and $\Fill^{2}\paren{\cellstrucP,\cellstrucQ},$ respectively. The restriction on one-to-one fillings may seem artificial as they do not allow a single connected cell to be transformed into two disjoint connected cells via a ``pair of pants,'' but as we will see below the resulting metric has nicer properties and is easier to compute.

\begin{figure}[t]
\center
\includegraphics[height=0.30\textwidth]{%
			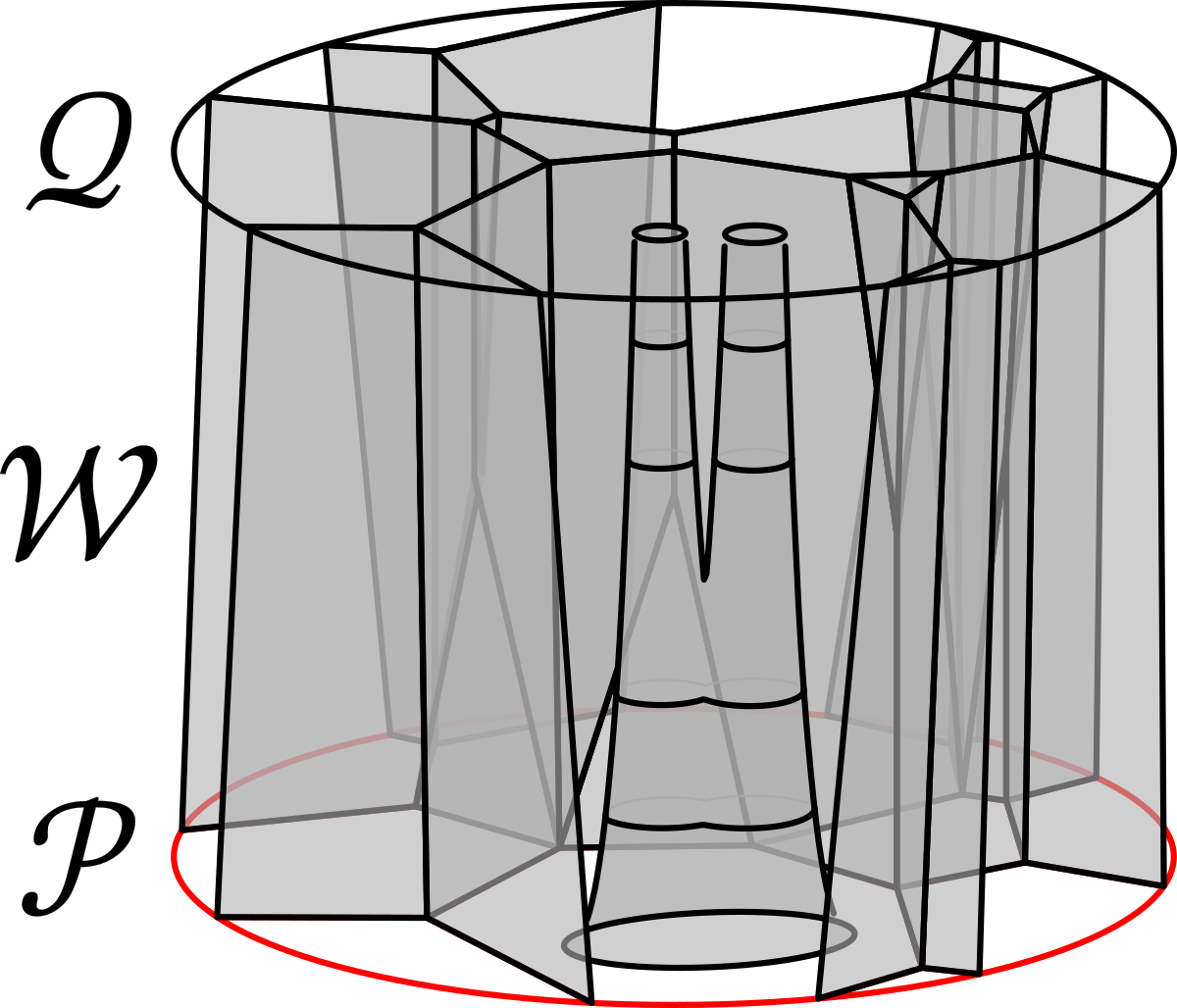} \quad
\includegraphics[height=0.30\textwidth]{%
			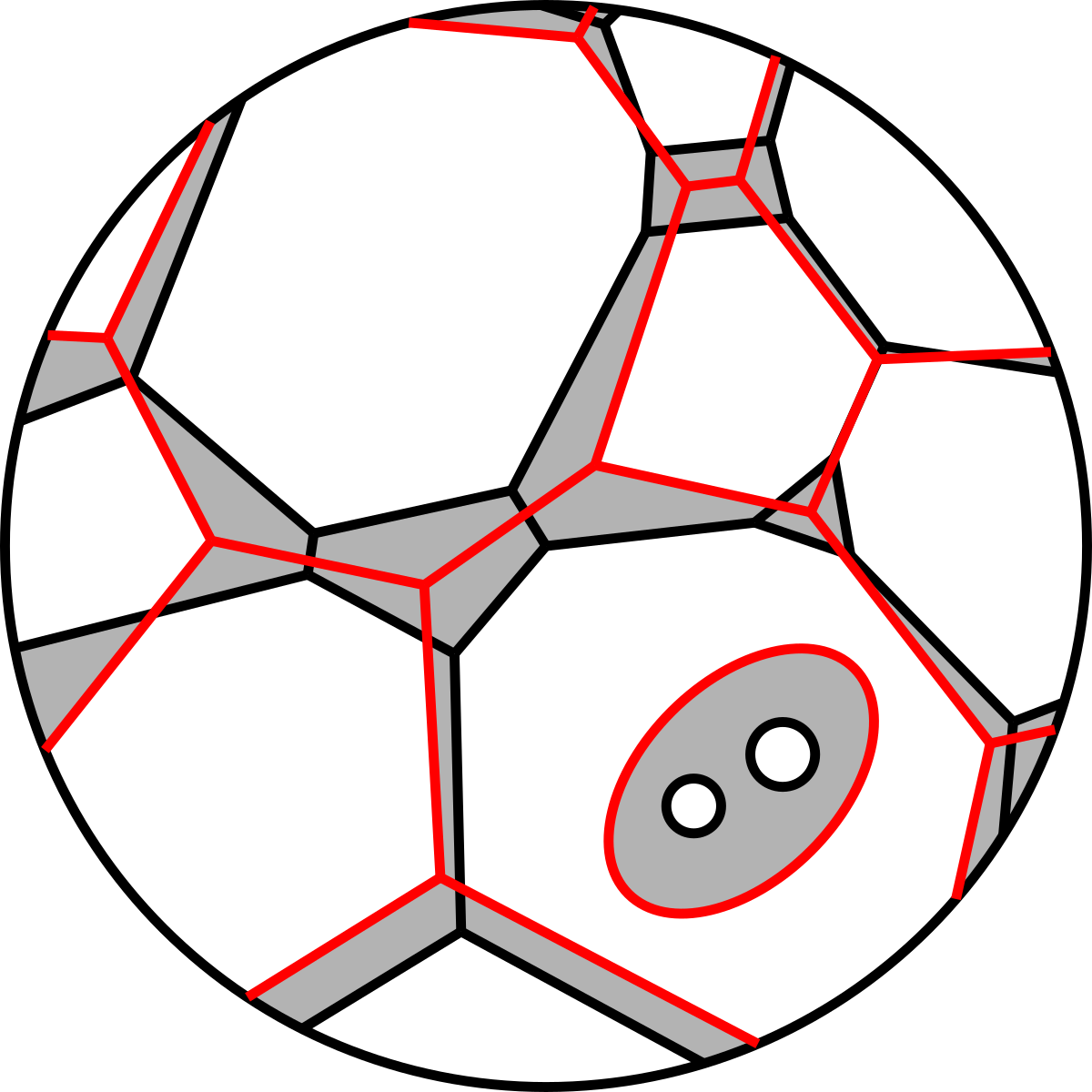}
\caption{\label{fig:volume_flux_manymany}
(Left) A many-to-many cellular filling $\cellstrucW$ between $\cellstrucP$ and $\cellstrucQ$ that is not a one-to-one cellular filling. Note that two cells are merged via a pair of pants. 
(Right) The projection into the plane %$\R^2 \times 0$
shows the volume exchange dissimilarity between $\cellstrucP$ and $\cellstrucQ$ as the area in grey.}
\end{figure}

Let $\cellstrucW$ be an $(n+1)$-dimensional cell structure in $\Omega\times\paren{0,1},$ let $\partial^{\circ} \cellstrucW$ the union of the essential boundaries of the elements of the partition $\cellstrucW,$ in the open cylinder, i.e., 
\[
\partial^{\circ} \cellstrucW = \bigcup_{W\in \cellstrucW} \partial^{\ast} W \cap (\Omega\times(0,1))
\]
and let $\pi: \Omega\times\paren{0,1} \to \Omega\times 0$ be the projection. 
We define the \textit{\textbf{volume exchange}} of a filling $\cellstrucW$ by
\begin{equation}\label{def:projected-area}
\VolEx\brac{\cellstrucW} = \int_{\Omega\times 0} \mathscr{H}^0(\partial^\circ \cellstrucW  \cap \pi^{-1}(x))\,d\mathscr{L}^n(x) = \int_{\partial^{\circ} \cellstrucW} |\langle \verticalvec, {\unitnormal}(x)\rangle|\,d\mathscr{H}^n(x),
\end{equation}
where $\unitnormal(x)$ is the unit normal vector to the approximate tangent space $\mathrm{Tan}^n(\partial^{\ast} \cellstrucW, x)$ (which coincides with the usual tangent space where $\partial^{\ast} \cellstrucW$ is smooth), $\verticalvec= (0, \dots, 0, 1),$ $\mathscr{L}^n$ is the Lebesgue measure, and $\mathscr{H}^n$ is the Hausdorff measure. 
The equality of the two expressions above follows from the area formula (see \cite[Remark 2.94]{ambrosio2000functions} or \cite[3.2.22]{federer2014geometric}).

\begin{definition}\label{defn:continuous}
Let $\cellstrucP,\cellstrucQ$ be cell structures in $\Omega.$ The \textbf{one-to-one volume exchange metric} is
$$\metric_{\mathrm{V}}^1\paren{\cellstrucP,\cellstrucQ}=\min_{\cellstrucW\in \Fill^1\paren{\cellstrucP,\cellstrucQ}} \VolEx\brac{\cellstrucW}$$
and the \textbf{many-to-many volume exchange dissimilarity is}
$$\metric_{\mathrm{V}}^2\paren{\cellstrucP,\cellstrucQ}=\min_{\cellstrucW\in \Fill^2\paren{\cellstrucP,\cellstrucQ}} \VolEx\brac{\cellstrucW}$$
\end{definition}

The existence of these minima will be shown in Proposition~\ref{prop:min} below as a corollary of our main theorem.

We now define discrete analogues of the two volume exchange metrics.
\begin{definition}\label{def:partial-matching}
A \textbf{partial matching} between two sets $\cellstrucP$ and $\cellstrucQ$ is a subset $\partialmatching$ of $\paren{\cellstrucP\cup \{\ast\}} \times \paren{\cellstrucQ \cup \{\ast\}}$ such that each cell of $\cellstrucP$ is matched to exactly one element of $\paren{\cellstrucQ \cup \{\ast\}}$ and each cell of $\cellstrucQ$ is matched to exactly one element of $\paren{\cellstrucP\cup \{\ast\}}.$

We denote the collection of partial matchings between $\cellstrucP$ and $\cellstrucQ$ as $\PM\paren{\cellstrucP, \cellstrucQ}$.
\end{definition}
See Figure~\ref{fig:discreteVE}. Note that $\PM\paren{\cellstrucP, \cellstrucQ} = \PM\paren{\cellstrucQ, \cellstrucP}$. 

\begin{definition}\label{def:independent-matching}
%Call two cells of a cell structure $\cellstrucP$ adjacent if the intersection of their essential boundaries has positive $(n-1)$-dimensional Hausdorff measure. A \textbf{coarsening} of a cell structure $\cellstrucP$ is a cell structure $\cellstrucP'$ so that each $E' \in \cellstrucP'$ is a union of non-adjacent cells of $\cellstrucP.$   
An \textbf{independent matching} between $\cellstrucP$ and $\cellstrucQ$ is a partial matching between coarsenings of $\cellstrucP$ and $\cellstrucQ.$
% is a subset $R$ of $(V_1\cup \{\ast\}) \times (V_2 \cup \{\ast\})$ such that  
% \begin{enumerate}
% \item[(a)] the canonical projections $R \to V_1$ and $R \to V_2$ are onto, and 
% \item[(b)] $R \cap (V_1\times V_2)$ is independent in the sense that if $\paren{v_1,w},\paren{v_2,w}\in R \cap (V_1 \times V_2)$ then $v_1$ and $v_2$ are not connected by an edge of $G_1,$ and if $\paren{v,w_1},\paren{v,w_2}\in R \cap (V_1 \times V_2)$ then $w_1$ and $w_2$ are not connected by an edge of $G_2.$  
% \end{enumerate}
\end{definition}

For cell structures $\cellstrucP, \cellstrucQ\subset \Omega$ denote by $\PM\paren{\cellstrucP,\cellstrucQ}$ the collection of partial matchings between the cells of $\cellstrucP$ and $\cellstrucQ$ and by $\IM\paren{\cellstrucP,\cellstrucQ}$ the collection of independent matchings of the cells of $\cellstrucP$ and $\cellstrucQ.$% when viewed as vertices of the adjacency graph on the cells (where two cells are adjacent if the intersection of their essential boundaries has positive $(n-1)$-dimensional Hausdorff measure). For $\partialmatching\in\PM\paren{\cellstrucP,\cellstrucQ}$ and $\paren{E,F}\in\partialmatching 

\begin{definition}\label{defn:discrete}
Let $\cellstrucP,\cellstrucQ$ be cell structures in $\Omega$.
The \textbf{one-to-one discrete volume exchange metric} between $\cellstrucP$ and $\cellstrucQ$ is defined as 
\begin{equation}\label{def:discrete-volume-exchange-metric}
\metric^1_{\mathrm{DV}}(\cellstrucP, \cellstrucQ) \coloneq \frac{1}{2}\min_{\partialmatching \in \PM(\cellstrucP, \cellstrucQ)}\sum_{(E, F) \in \partialmatching} \mathscr{L}^n\paren{E \triangle F},
\end{equation}
where $E\triangle F \coloneq (E\setminus F) \cup (F\setminus E)$ denotes the \emph{symmetric difference} between sets $E$ and $F$ and we treat $\ast$ as the empty set.  
Similarly, the \textbf{many-to-many discrete volume exchange dissimilarity} is 
\begin{equation}\label{def:discrete-volume-exchange-diss}
\metric^2_{\mathrm{DV}}\paren{\cellstrucP, \cellstrucQ} \coloneq \frac{1}{2}\min_{\partialmatching \in \IM(\cellstrucP, \cellstrucQ)}\sum_{(E', F') \in \partialmatching} \mathscr{L}^n\paren{E' \triangle F'}.
\end{equation}
\end{definition}
We show that the minima are attained when $\cellstrucP$ and $\cellstrucQ$ have countably many cells in Proposition~\ref{prop:min}. 
As we explain below, $\metric^1_{\mathrm{DV}}\paren{\cellstrucP,\cellstrucQ}$  can be computed efficiently by converting to an equivalent linear assignment problem.

\begin{figure}[t]
\center
\includegraphics[height=0.30\textwidth]{%
			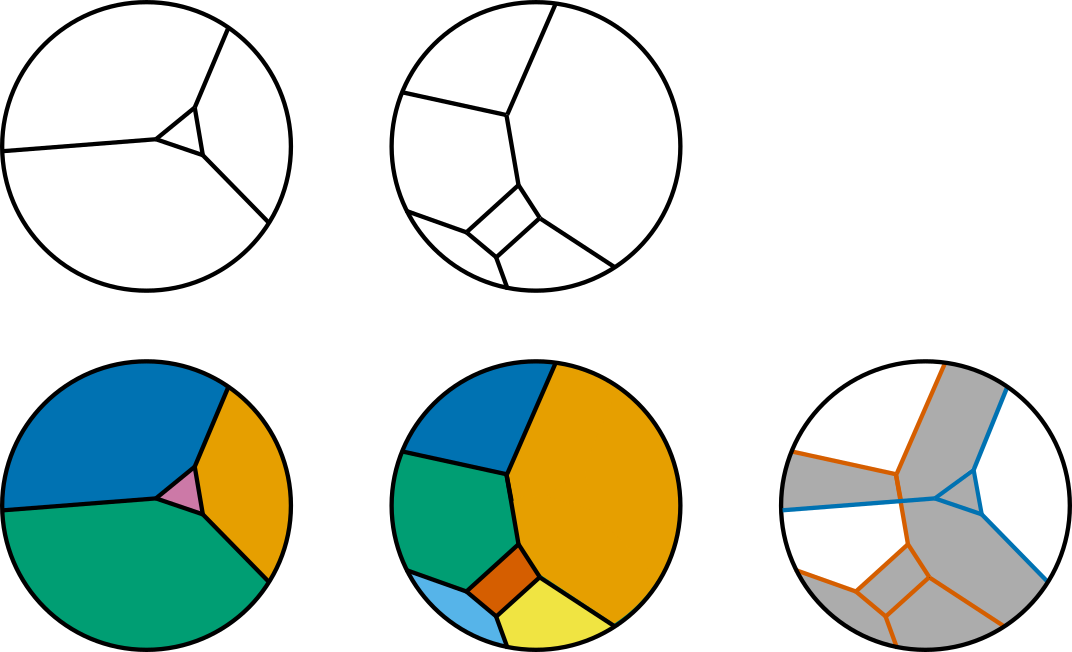} 
\caption{\label{fig:discreteVE}
The optimal partial matching between the cell structures in the top row is illustrated by the colors in the bottom row. That is, the dark blue, orange, and green cells are matched with each other, whereas the pink, light blue, yellow, and red cells are matched to $\ast.$ The cost of the matching is the area of the shaded region on the right. The optimal independent matching would also pair the yellow cell with the green cell, reducing the cost by the area of the former.}
\end{figure}

Our main theorem is the equality of the continuous and discrete volume exchange metrics.

\begin{theorem}\label{thm:main}
Let $\Omega$ be a bounded, connected, open subset of $\R^n$ and let $\cellstrucP,\cellstrucQ$ be cell structures in $\Omega.$ Then
$$\metric_{\mathrm{V}}^1\paren{\cellstrucP,\cellstrucQ}=\metric^1_{\mathrm{DV}}\paren{\cellstrucP, \cellstrucQ}$$
and
$$\metric_{\mathrm{V}}^2\paren{\cellstrucP,\cellstrucQ}=\metric^2_{\mathrm{DV}}\paren{\cellstrucP, \cellstrucQ}\,.$$
\end{theorem}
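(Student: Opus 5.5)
We prove two inequalities, first for the one-to-one case. The many-to-many case then follows by applying the one-to-one statement to every pair of coarsenings $\cellstrucP',\cellstrucQ'$. Indeed, by definition $\Fill^2(\cellstrucP,\cellstrucQ)=\bigcup_{\cellstrucP',\cellstrucQ'}\Fill^1(\cellstrucP',\cellstrucQ')$ and $\IM(\cellstrucP,\cellstrucQ)=\bigcup_{\cellstrucP',\cellstrucQ'}\PM(\cellstrucP',\cellstrucQ')$. Taking infima of $\VolEx$ and of the matching cost over the same index set of coarsening pairs gives equality of the infima. Attainment is deferred to Proposition~\ref{prop:min}, so for now we argue with infima.

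\emph{Upper bound $\metric^1_{\mathrm V}\le \metric^1_{\mathrm{DV}}$.} Given a partial matching $\partialmatching$, we build a filling whose volume exchange equals $\frac12\sum_{(E,F)\in\partialmatching}\mathscr L^n(E\triangle F)$. Each point $x\in\Omega$ lies (a.e.) in exactly one cell $E_x\in\cellstrucP$ and one cell $F_x\in\cellstrucQ$.
\begin{itemize}
\item If $(E_x,F_x)\in\partialmatching$, the vertical segment $\{x\}\times(0,1)$ belongs entirely to the cell $W_{E_x F_x}$.
\item Otherwise, the segment belongs to $W_{E_x}$ (the cell matched with $E_x$) for $t<1/2$ and to the cell matched with $F_x$ for $t>1/2$. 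If $E_x$ is matched to $\ast$, the lower part is a vanishing cell $E_x\times(0,1/2)$; similarly for $F_x$.
\end{itemize}
Concretely, for $(E,F)\in\partialmatching$ set
\[
W_{EF}=\big((E\cap F)\times(0,1)\big)\cup\big((E\setminus F)\times(0,\tfrac12)\big)\cup\big((F\setminus E)\times(\tfrac12,1)\big),
\]
with $\ast=\emptyset$. The horizontal interface at height $1/2$ lies over the set $\{x:(E_x,F_x)\notin\partialmatching\}$. Vertical walls $\partial^*E\times(0,1)$ contribute nothing, since $\langle\verticalvec,\unitnormal\rangle=0$ there. Therefore $\VolEx=\mathscr L^n\{x:(E_x,F_x)\notin\partialmatching\}$, and summing over pairs shows this equals $\frac12\sum\mathscr L^n(E\triangle F)$: each such $x$ is counted once in $E_x\setminus F$ and once in $F_x\setminus E$. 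It remains to check that the $W_{EF}$ are sets of finite perimeter with the correct traces at $t=0$ and $t=1$. This follows because they are finite Boolean combinations of products of Caccioppoli sets with intervals, and the sum of perimeters is bounded by $\mathrm{Per}(\cellstrucP)+\mathrm{Per}(\cellstrucQ)+2\mathscr L^n(\Omega)$. Cells of measure zero are discarded.

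\emph{Lower bound $\VolEx[\cellstrucW]\ge\metric^1_{\mathrm{DV}}$ for every $\cellstrucW\in\Fill^1(\cellstrucP,\cellstrucQ)$.} The filling induces a partial matching. By the definition of a one-to-one filling, each $E\in\cellstrucP$ is the bottom trace of a unique cell $W$, and each $F$ is the top trace of a unique cell. Uniqueness holds because distinct cells have disjoint essential interiors, so their traces are a.e. disjoint. We match $E$ with $F$ when they are traces of the same $W$, and with $\ast$ otherwise. This is a well-defined partial matching because each $W$ has at most one bottom trace cell and one top trace cell. Now use the slicing form of $\VolEx$: for a.e. $x$, the vertical line $\pi^{-1}(x)$ meets $\partial^\circ\cellstrucW$ in $\mathscr H^0(\cdot)$ points. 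We need that for a.e. $x$ with $(E_x,F_x)\notin\partialmatching$ there is at least one crossing. For a.e. $x$, the one-dimensional slice of each $W$ is a finite union of intervals whose endpoints are exactly the points of $\partial^*W$ on the line. Moreover, the trace of $W$ at $t=0$ restricted to the slice equals the limit value of $\mathbbm 1_W(x,t)$ as $t\to0^+$, and similarly at $t=1$. These are standard facts about slicing BV functions (Ambrosio--Fusco--Pallara, Section 3.11). So if the line had no interior crossing, it would lie in a single $W$ whose bottom trace contains $x\in E_x$ and whose top trace contains $x\in F_x$. That would force $(E_x,F_x)\in\partialmatching$. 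Hence $\VolEx[\cellstrucW]\ge\mathscr L^n\{x:(E_x,F_x)\notin\partialmatching\}=\frac12\sum_{\partialmatching}\mathscr L^n(E\triangle F)\ge\metric^1_{\mathrm{DV}}$.

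\emph{Main obstacle.} The delicate step is the slicing argument in the lower bound. We must show that for a.e. $x$ the restriction of the partition to the vertical line is a partition of $(0,1)$ into finitely many intervals. Its jump points must coincide with $\partial^\circ\cellstrucW\cap\pi^{-1}(x)$, and its one-sided limits at the ends must agree with the traces on $\Omega\times\{0,1\}$. With countably many cells, we handle this by applying the BV slicing theorem to each $\mathbbm 1_W$. The total variation in the vertical direction of $\sum_W\mathbbm 1_W$-slices is controlled by $\VolEx$, which is finite a.e. Finiteness of this variation then rules out accumulation of infinitely many cells along a.e. line. We also need the traces to be identified via the Gauss--Green formula on the cylinder.
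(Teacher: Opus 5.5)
Your proof is correct and has the same skeleton as the paper's. Your cells $W_{EF}$ are exactly the level sets of the step filling $w^\ast$ of Lemma~\ref{lem:step-function}, labelled by the matching. Your crossing count is the discrete-metric case of the slicing bound $|Dw_x|(I)\ge\delta(w_x|_0,w_x|_1)$, combined with the agreement of slice endpoint traces and boundary traces (Proposition~\ref{prop:direcitonal-derivative-dominates-L1} and Lemma~\ref{lem:boundary-trace-endpoint-trace-agreement}). The many-to-many case is the same infimum over coarsenings.

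The difference is packaging. You work directly with indicator functions of cells and read the matching off the filling. The paper instead encodes cells as labels in the discrete metric space on $\N$ and passes through metric-valued BV functions, the isometry formulation of Corollary~\ref{cor:iso}, and Lemma~\ref{lem:l1_sym_diff}; your more elementary argument sidesteps all of this.
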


The proof of Theorem~\ref{thm:main} relies heavily upon the tools of geometric measure theory, a field that arose from the study of Plateau's problem on minimal surfaces and its generalizations. 
Towards that end, we convert the statement into one concerning a distance between piecewise-constant functions, where a piecewise-constant function is one whose non-empty level sets form the cells of a cell structure. 
Note that the assignment of labels in a metric space $\paren{Y,\delta}$ to the cells of a cell structure $\cellstrucP$ induces to a piecewise-constant function $p:\Omega\to Y$, which is well-defined up to a set of Lebesgue measure zero. 
Given a partial matching $\partialmatching$ between the cells of $\cellstrucP$ and those of $\cellstrucQ,$ we may choose $Y$ to be a discrete metric space and label the cells of $\cellstrucP$ and $\cellstrucQ$ so that matched cells have the same labels. 
In Lemma~\ref{lem:l1_sym_diff} we show
$$\frac{1}{2}\sum_{(E, F) \in \partialmatching} \mathscr{L}^n\paren{E \triangle F}=\int_\Omega \delta\paren{p,q}\,d\mathscr{L}^n\,.$$
On the other hand, given two piecewise-constant functions  $p\colon\Omega\to Y$ and $q\colon\Omega\to Y$, a \textit{\textbf{piecewise-constant filling}} between $p$ and $q$ is a piecewise-constant function  $w\colon\Omega\times\paren{0,1}\to Y$ so that the boundary trace (the ``approximate limits'' of $w$ on the boundary) satisfy $w\mid_{\Omega\times 0}=p$ and $w\mid_{\Omega\times 1}=q,$ taken up to equivalence in the $L^1$ norm. 
There is a correspondence between $Y$-labeled cellular fillings between $\cellstrucP$ and $\cellstrucQ$ and piecewise-constant fillings between functions $p$ and $q$ which are constant on the grains of $\cellstrucP$ and $\cellstrucQ$ respectively. 
We define the energy of $w$ as follows. 
We may write $\Omega\times\paren{0,1}=C\cup S\cup F$ where $w$ is asymptotically almost everywhere constant in small neighborhoods of points in $C,$ where $w$ takes two values $w^+\paren{x}$ and $w^-\paren{x}$ asymptotically almost everywhere in  small neighborhoods of $x\in S,$ and $\mathscr{H}^{n}\paren{F}=0.$ Furthermore,  $S$ is $\mathscr{H}^{n}$-rectifiable with a well-defined approximate unit normal $\unitnormal_w\colon S\to \mathbb{S}^{n}$. 
We define the \textbf{\textit{projected surface energy}} of $w$ as 
\begin{equation}\label{def:surface-energy}
\VolEx_\delta[w] = \int_{S} \delta\paren{w^+, w^-}|\langle {\verticalvec}, \unitnormal_w\rangle|\,d\mathscr{H}^n
\end{equation}
where $\mathbf{v} = (0, \dots, 0, 1)$ is the unit vector in the ``vertical'' direction.  

%Recall that a metric space is proper if closed balls are compact.
% \begin{theorem}\label{thm:piecewise}
% Suppose that $\paren{Y,\delta}$ is a separable, proper metric space. 
% Let $g:\Omega\to Y$ and $h:\Omega\to Y$ be piecewise constant, and denote by $\PC\paren{g,h}$ the collection of piecewise-constant fillings between them. Then 
% $$\min_{u\in \PC\paren{g,h}} \VolEx_\delta[u] = \|\delta\paren{g,h}\|_{{L^1}(\Omega)}$$
% and the minimum exists.
% \end{theorem}

\begin{theorem}\label{thm:l1_energy_for_metric_BV}
Let $\Omega$ be a bounded, connected, open subset of $\mathbb{R}^{n}$ and let $(Y, \delta)$ be a complete, separable metric space. 
For any two piecewise constant functions $p, q\in \PC(\Omega; Y)$ on $\Omega$, 
\[
%\min_{u \in \mathsf{F}(p, q)}\VolEx{[\delta(u^+, u^-); S_u]} = \int_\Omega \delta(p, q)\,d\mathscr{L}^{n-1}
\min_{w \in \mathsf{F}(p, q)}\VolEx_\delta[w]  = \int_\Omega \delta\paren{p, q}\,d\mathscr{L}^{n}
\]
and the minimum exists, where $\mathsf{F}(p, q)$  denotes the collection of piecewise-constant fillings between $p$ and $q.$ 
\end{theorem}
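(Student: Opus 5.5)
We establish the two inequalities separately: the lower bound for every filling, and then an explicit filling that attains it.

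\textbf{Lower bound.} Let $w\in\mathsf{F}(p,q)$. We would prove $\VolEx_\delta[w]\ge\int_\Omega\delta(p,q)\,d\mathscr{L}^n$ by slicing along vertical lines. For $\mathscr{L}^n$-a.e.\ $x\in\Omega$, the one-dimensional slice $w_x(t)=w(x,t)$, $t\in(0,1)$, is a piecewise-constant function of one variable. This uses the slicing theory for BV / Caccioppoli partitions, as in \cite[Sec.~3.11]{ambrosio2000functions}; it can be applied after composing $w$ with $1$-Lipschitz maps $y\mapsto\delta(y,y_k)$ for a countable dense set $\{y_k\}\subset Y$, which is where separability of $Y$ enters.

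The slicing theory gives the following facts for a.e.\ $x$:
\begin{itemize}
\item the jump set of $w_x$ is $S\cap\pi^{-1}(x)$;
\item the one-sided limits of $w_x$ at each jump are $w^\pm$;
\item the traces of $w_x$ at $t=0,1$ are $p(x)$ and $q(x)$.
\end{itemize}
The coarea/area formula then gives
\[
\VolEx_\delta[w]=\int_\Omega\sum_{t\in S_{w_x}}\delta\big(w_x(t^+),w_x(t^-)\big)\,d\mathscr{L}^n(x).
\]
This is the same area-formula identity used in \eqref{def:projected-area}, now weighted by $\delta$.

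For each such $x$, the triangle inequality along the finitely or countably many jumps of $w_x$ bounds the inner sum below by $\delta(p(x),q(x))$. The countable case is handled via total variation of a BV curve in a metric space. Integrating over $x$ gives the lower bound.

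\textbf{Upper bound and attainment.} We construct a filling whose energy equals $\int_\Omega\delta(p,q)$. Take the simplest candidate:
\[
w(x,t)=p(x)\ \text{for}\ t<\tfrac12,\qquad w(x,t)=q(x)\ \text{for}\ t>\tfrac12.
\]
The jump set of $w$ consists of three pieces:
\begin{itemize}
\item the horizontal slice $\Omega\times\{\tfrac12\}$, with vertical normal and jump $\delta(p,q)$, contributing exactly $\int_\Omega\delta(p,q)$;
\item the vertical cylinders $(S_p)\times(0,\tfrac12)$, with horizontal normals;
\item the vertical cylinders $(S_q)\times(\tfrac12,1)$, with horizontal normals.
\end{itemize}
Since $\langle\verticalvec,\unitnormal\rangle=0$ on the vertical cylinders, they contribute nothing.

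It remains to check that $w$ is a genuine piecewise-constant function on $\Omega\times(0,1)$. Its level sets are the sets $\{p=y\}\times(0,\frac12)\cup\{q=y\}\times(\frac12,1)$. We need:
\begin{itemize}
\item each has positive measure, which holds after discarding empty ones;
\item the total perimeter is finite, which follows from the bound $\mathscr{H}^n\le \tfrac12(\mathscr{H}^{n-1}(S_p)+\mathscr{H}^{n-1}(S_q))+\mathscr{L}^n(\Omega)<\infty$, using boundedness of $\Omega$;
\item the traces on $\Omega\times 0$ and $\Omega\times 1$ are $p$ and $q$.
\end{itemize}
Hence this $w$ realizes equality, and the minimum is attained.

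\textbf{Main obstacle.} The delicate step is the slicing argument in the lower bound for a general metric target $Y$ with countably many cells. One must justify two things for a.e.\ $x$: that the slices have traces $p(x),q(x)$ at the ends, and that the jumps of the slice coincide with the jumps of $w$ with the correct values $w^\pm$.

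I would try to reduce to the real-valued case by working with each characteristic function $\chi_{\{w=y\}}$, which is a set of finite perimeter. For sets of finite perimeter, the slicing and trace properties are standard (\cite[Thm.~3.108, 3.87]{ambrosio2000functions}). One then intersects the countably many full-measure sets of good $x$.
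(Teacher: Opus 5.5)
Your proposal is correct and follows essentially the same route as the paper. The lower bound comes from vertical slicing, the area formula, agreement of the slices' endpoint traces with $p(x),q(x)$, and bounding $\delta(p(x),q(x))$ by the total jump of $w_x$ (the paper packages this as $\VolEx_\delta[w]=\int_\Omega|Dw_x|(I)\,d\mathscr{L}^n(x)\ge\int_\Omega\delta(p,q)\,d\mathscr{L}^n$ via Theorem~\ref{thm:general-bv-statement}), and the upper bound and attainment come from the step filling $w^\ast$ jumping from $p$ to $q$ at $t=1/2$. The differences are only bookkeeping: you read off $\VolEx_\delta[w^\ast]$ directly from the geometry of its jump set, and you slice the level-set indicators $\chi_{\{w=y\}}$ rather than the scalar functions $\delta(w,y_i)$; both variants work.
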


%For polycrystalline materials, one can take $Y$ to be the double quotient of $SO(3)$ by the point symmetry group of the crystalline lattice and $\delta$ to be a misorientation energy function.
%The preceding theorem implies that the minimum energetic cost to transform one cell structure into another is given by the $L^1$ norm of the misorientation energy.

%Later, we define the cost of a piecewise constant filling between piecewise constant functions in a way consistent with Equation~\ref{eq:def:volume-exchange}. Then, to prove Theorem~\ref{thm:main}, it will suffice to show that  the minimum cost of a filling between $g_M$ and $g_M$ equals the $L^1$ distance between those functions. In fact, we prove a stronger result that allows for the choice of different surface energy functions.

\subsection{Outline}

We begin in Section~\ref{sec:prelim} by reviewing concepts from metric geometry and geometric measure theory including the properties of scalar-valued functions of bounded variation (BV) and sets of finite perimeter. 
Section~\ref{sec:metric-valued-BV-functions} develops the theory of metric space valued BV functions, with the beginning and Section~\ref{subsec:energywelldefined} reviewing the concept of an approximate limits~\cite{ambrosio1990metric} and defining the terms appearing in Equations~\ref{def:projected-area} and ~\ref{def:surface-energy}. 
A key technical result is Theorem~\ref{thm:general-bv-statement} in Section~\ref{sec:horizontal}, which states that if $\paren{Y,\delta}$ is a complete, separable metric space, $\Omega$ is a bounded, open, connected subset of $\R^n,$ and $p,q:\Omega\to Y$ are BV functions then
\begin{equation*}
\min_{\substack{w \in \BV(\Omega\times  I; Y)\\
w|_{\Omega\times 0} = p\text{ a.e.}\\
w|_{\Omega\times 1} = q\text{ a.e.}}}\int_\Omega |Dw_x|(0,1)\,d\mathscr{L}^n(x) = \int_{\Omega}\delta(p, q)\,d\mathscr{L}^n.
\end{equation*}
In the special case where $w$ is a piecewise-constant filling between $p$ and $q,$ the infimand on the left hand side agrees with the projected surface area defined in Equation~\ref{def:surface-energy}. 
As such, this is a key step towards proving Theorems~\ref{thm:l1_energy_for_metric_BV} and~\ref{thm:main}.
Theorem~\ref{thm:general-bv-statement} is proven by first showing that the projected surface area of $w$ is bounded below by the integral of the total variation of one-dimensional slices $w_x\paren{t}=w\paren{x,t}$ which is in turn bounded by the $L^1$ distance between $p$ and $q.$ 
Then, this lower bound is achieved by the filling $w_0:\Omega\times I\to Y$ defined by
$$w_0\paren{x,t}=\begin{cases} p\paren{x} & t\leq 1/2 \\ q\paren{x} & t>1/2\end{cases}.$$ Note that if $p,q$ are piecewise constant then so is $w_0.$ 

Three types of piecewise constant fillings between piecewise constant functions are defined in Section~\ref{sec:volume}. 
These lead to versions of the one-to-one volume exchange metric and many-to-many volume exchange dissimilarity in this setting, which are shown to agree with the notions in the introduction in Proposition~\ref{prop:VEFunction}. 
This section also includes a proof of Theorem~\ref{thm:l1_energy_for_metric_BV}. 

Section~\ref{sec:volume_discrete} begins with a proof of the main result of the paper, Theorem~\ref{thm:main}. 
Next, the metric properties of the one-to-one volume exchange metric and many-to-many volume exchange dissimilarity are explored in Section~\ref{subsec:metric}.
Sections~\ref{subsec:dual} and~\ref{subsec:compute11} explain how a dual formulation of the discrete one-to-one volume exchange metric can be computed efficiently as a linear assignment problem and Section~\ref{subsec:computemm} outlines an algorithm to compute the many-to-many volume exchange dissimilarity.
Finally, Section~\ref{sec:comp_exp} describes computational experiments applying these dissimilarities to a dataset of synthetic microstructures.

\section{Mathematical Preliminaries}\label{sec:prelim}
% To define and prove theorems about the discrete and continuous volume exchange, we use concepts from metric geometry, and geometric measure theory; in particular, the theory of functions of bounded variation and sets of finite perimeter. 
In this section, we will review this material and provide references. 
Most of the material in this section can be found in Ambrosio, Fusco, and Pallara's monograph~\cite{ambrosio2000functions} on functions of bounded variation, or Maggi's book~\cite{maggi2012sets} on sets of finite perimeter. 

% \subsection{Metric geometry}
% In this section, we will discuss the notions of \emph{isometric metric spaces}, \emph{isometry groups}, their \emph{orbits}, and the natural isomorphism between subgroups of the isometry group of certain spaces of functions into a metric space $Y$, and the isometry group of $Y$. 
We will use these properties to show that the one-to-one discrete volume exchange is in fact a metric. 
Whenever $\delta$ satisfies symmetry, non-negativity, and $\delta\paren{y, y} = 0$ for all $y \in Y$, our function $\delta$ is called a \textit{\textbf{dissimilarity}}.

A \textit{\textbf{metric space} is a set $Y$ together with a \textbf{distance} (\textbf{metric}) $\delta$, a symmetric real-valued function defined on $Y\times Y$ such that $\delta\paren{y_1, y_2} = 0$ if and only if $y_1 = y_2$, and $\delta(y_1, y_2) \leq \delta\paren{y_1, y_3} + \delta\paren{y_3, y_2}$ for all $y_1, y_2, y_3 \in Y$}. 
A function $\varphi\colon Y \to Z$ from the metric space $Y$ into another metric space $(Z, \metric_Z)$ is called \textit{\textbf{distance preserving}} if 
\begin{equation}\label{def:distance-preserving}
\metric_Z(\varphi(y), \varphi(y')) = \delta(y, y') \quad \textit{ for all } y, y' \in Y.
\end{equation}
If in addition, $\varphi$ is \textit{surjective (and hence bijective)} we call $\varphi$ an \textit{\textbf{isometry}}, and say that $Y$ and $Z$ are \textit{\textbf{isometric}}.
The collection of all isometries from $Y$ to $Z$ is denoted by $\Iso(Y; Z)$; when $Z = Y$ we write $\Iso(Y)$ as opposed to $\Iso(Y; Y)$. 
% Because function composition defines a group operation on the family of isometries from $Y$ to itself, we call $\Iso(Y)$ the \textit{\textbf{isometry group of}} $Y$.

% A crucial fact we will be using for the proof of the equality between the discrete volume exchange metric and the continuous volume exchange metric, is the following.
% For any subgroup $G$ of $\Iso(Y)$ we can define a pseudometric $\metric_{Y/G}$ on the \textit{\textbf{orbits of}} $G$, \textit{the quotient set $Y/G = \{Gy : y \in Y\}$ consisting of equivalence classes of the form $Gy \coloneq \{\varphi(y) : \varphi \in G\}$}. 
% This pseudometric $\metric_{Y/G}$ is defined by
% \[
% \metric_{Y/G}(Gy, Gy') \coloneq \inf_{\phi,\varphi \in G}\delta (\varphi(y), \phi(y')) \quad \text{ for all } y, y' \in Y. 
% \]

% Suppose $F$ is a family containing functions from a bounded, connected, open set $\Omega\subset \mathbb{R}^n$ into $Y$, and suppose that $F$ is equipped with a metric $d_F$. 
% If $\varphi\circ f \in F$ and $\metric_F\circ (\varphi\times \varphi) = \metric_F$ for all $\varphi \in \Iso(Y)$, then the isometry group of $Y$ is naturally isomorphic to a subgroup of the isometry group of $F$. 
% This isomorphism $\Phi \colon \Iso(Y) \hookrightarrow \Iso(F)$ is defined as the mapping
% \begin{equation}
% \varphi \mapsto \Phi_\varphi, \quad \text{ where } \quad \Phi_\varphi(f) = \varphi\circ f \quad \text{ for all } f \in F.
% \end{equation}

% We let $\Iso(Y)$ denote the subgroup in $F$.

% \subsection{Scalar-valued \texorpdfstring{$\BV$}{BV}-functions}
Let us go through some terminology and notations from measure theory and scalar-valued functions of bounded variation ($\BV$-functions) that we will liberally use throughout the paper.
We refer the reader to \cite{ambrosio2000functions} for a thorough treatment of these topics. 

We let $\mathfrak{B}(\mathbb{R}^n)$ denote the \textit{\textbf{Borel $\sigma$-algebra} of $\mathbb{R}^n$}, the smallest $\sigma$-algebra on $\mathbb{R}^n$ containing all of the open sets. 
A \textit{\textbf{Borel measure}} is a $\sigma$-additive set function $\mu\colon \mathfrak{B}(\mathbb{R}^n)\to [0, +\infty]$ such that $\mu(\emptyset) = 0$.
If in addition, $\mu(\mathbb{R}^n) < \infty$, then $\mu$ is called a \textit{\textbf{finite Borel measure}}.
If a positive Borel measure $\mu$ is finite on compact sets, it is called a \textit{\textbf{Radon measure}}. 
Similarly, an \textit{\textbf{$\sigma$-additive set function}} $\mu\colon \mathfrak{B}(\mathbb{R}^n) \to \mathbb{R}^m$ (with $\mu(\emptyset) = 0$) is called a \textit{\textbf{vector measure}} on $\mathbb{R}^n$. 
We let $\mathcal{M}(\mathbb{R}^n; \mathbb{R}^m)$ denote the collection of all vector ($\mathbb{R}^m$-valued) measures in $\mathbb{R}^n$.
The \textit{\textbf{total variation}} $|\mu|$ of $\mu$ is defined by $|\mu|(B) \coloneq \sup \sum_{k=1}^\infty |\mu(B_k)|$, where the supremum is taken over all partitions $\{B_k\}$ of $B$ such that each $B_k$ is a Borel set.
Importantly, for any $\mu \in \mathcal{M}(\mathbb{R}^n; \mathbb{R}^m)$, $\mu(\emptyset) = 0$ and its total variation $|\mu|$ is a finite Borel measure on $\mathbb{R}^n$.

We will often use of the \textit{\textbf{$n$-dimensional Lebesgue measure}} $\mathscr{L}^n$ in $\mathbb{R}^n$, the \textit{\textbf{$k$-dimensional Hausdorff measure}} $\mathscr{H}^k$, and frequently, the Hausdorff measure $\mathscr{H}^k\allardell \Sigma$ restricted to ``well-behaved'' subsets $\Sigma$ of $\mathbb{R}^n$, defined as follows.
For any measure $\mu$ and Borel set $E$ in $\mathbb{R}^n$, we define a Borel measure $\mu\allardell E(A) \coloneq \mu(E\cap A)$, for any Borel set $A\subset \mathbb{R}^n$. 

Let $\Omega$ be a bounded, connected, open subset of $\mathbb{R}^n$. 
We let $L^1(\Omega)$ denote the collection of all $\mathscr{L}^n$-measurable, real-valued functions $f$ on $\Omega$ with finite \textit{\textbf{$L^1$-norm}}:
$\|f\|_{L^1(\Omega)} \coloneq \int_\Omega |f|\,d\mathscr{L}^n < +\infty$.
We let $\dashint_A f\,d\mu$ denote the average $\frac{1}{\mu(A)}\int_A f\,d\mu$.
We turn $L^1(\Omega)$ into a metric space by identifying two functions together when the $L^1$-norm of their difference is zero; equivalently, when the two functions differ on a set of $\mathscr{L}^n$-measure zero.
\textit{If for each $j = 1, \dots, n$, there exists a signed \emph{finite} Radon measure $D_jf$ in $\Omega$ satisfying
\begin{equation}\label{def:BV-function}
\int_\Omega f\,\partial_j \phi\,d\mathscr{L}^n = -\int_\Omega \phi\,dD_j f \quad \text{ for all } \phi \in C_c^\infty(\Omega),
\end{equation}}
then $f$ is called a \textbf{\textit{$\BV$-function}} on $\Omega$, the vector-valued measure $Df = (D_1f, \dots, D_nf)$ the \textbf{\textit{weak-derivative of}} $f$, and $D_jf$, the weak-derivative of $f$ in the $j$-th (or $\boldsymbol{e}_j$-th) direction. 
We denote the set of all $\BV$-functions in $\Omega$ by $\BV(\Omega)$.

A subset $S$ of $\mathbb{R}^{n+1}$ is called \textit{\textbf{countably $\mathscr{H}^n$-rectifiable}} if $S$ is contained in the union of (at most) countably many images of Lipschitz functions from $\mathbb{R}^n$ to $\mathbb{R}^{n + 1}$, up to an $\mathscr{H}^n$-negligible set. 
For $\mathscr{H}^n$-a.e.~$x \in S$ there exists an $n$-dimensional (vector) subspace $V\subseteq \mathbb{R}^{n+1}$ called the \textit{\textbf{approximate tangent space}} of $S$ at $x$, the $n$-dimensional subspace $V$ of $\mathbb{R}^{n+1}$ such that 
\begin{equation}
\lim_{r \to 0} \int_{S_{x, r}} \phi\,d\mathscr{H}^n = \int_V \phi\,d\mathscr{H}^n\quad \text{ for all } \phi \in C_c(\mathbb{R}^{n+1}; \mathbb{R}),
\end{equation}
where $S_{x, r} = (S - x)/r$ for any $x \in S$ and $r > 0$, and denoted by $\mathrm{Tan}^n(S, x)$. 

If $S$ is a countably $\mathscr{H}^{n}$-rectifiable subset of $\mathbb{R}^{n+1}$, we say that $S$ is \textit{\textbf{oriented}} by a Borel map $\unitnormal\colon S \to \mathbb{S}^{n}$ if $\mathrm{Tan}^n(S, x) = \unitnormal^\perp(x)$ for $\mathscr{H}^{n}$-a.e.~$x\in S$. 

\subsection{Sets of finite perimeter}

For any $\mathscr{L}^n$-measurable subset $E$ of $\mathbb{R}^n$ the \textit{\textbf{perimeter of $E$ in $\Omega$} is denoted by $P\paren{E; \Omega}$ and defined as 
\[
P\paren{E; \Omega} \coloneq \sup\Big\{\int_E \mathrm{div}{\phi}\,dx : \varphi \in C_c^1(\Omega; \mathbb{R}^n),\ \|\phi\|_\infty \leq 1\Big\}.
\]}

\begin{remark}
The class of sets of finite perimeter in $\Omega$ includes all sets $E$ with $C^1$ boundary inside $\Omega$ such that $\mathscr{H}^{n-1}(\Omega\cap \partial E) < \infty$. 
For such sets, $\mathscr{H}^{n-1}(\Omega \cap \partial E) = P(E, \Omega)$.
\end{remark}  

We will now review a few notions of measure theoretic ways of capturing the boundary, interior, and exterior for a set of finite perimeter. 
The \textit{\textbf{reduced boundary}} of a set of finite perimeter $E$ in $\mathbb{R}^n$ is \textit{the collection $\partial_{\ast} E$ of all points $x \in \R^n$ such that the limit 
\[
\unitnormal_E(x)\coloneq \lim_{r\to 0+} \frac{D\chi_E(B(x, r))}{|D\chi_E|(B(x, r))}
\]
exists in $\mathbb{R}^n$ and satisfies $|\unitnormal_E(x)| = 1$}. 
For each $t \in [0, 1]$, we let $E^{(t)}$ denote the collection of all points $x \in \mathbb{R}^n$ of density $t$, i.e., 
\begin{equation}\label{def:density}
E^{(t)} \coloneq \Big\{x \in \mathbb{R}^n : \lim_{r \to 0+} \frac{\mathscr{L}^n(E\cap B(x, r))}{\mathscr{L}^n(B(x, r))} = t\Big\}.
\end{equation}
We call $E^{(1)}$ the \textit{\textbf{measure theoretic interior}}, and $E^{(0)}$ the \textit{\textbf{measure theoretic exterior}}, and the set $\partial^\ast E \coloneq \mathbb{R}^n \setminus (E^{(0)}\cup E^{(1)})$ the \textit{\textbf{essential boundary}} of $E$.
Federer showed (see~\cite[Theorem~3.61]{ambrosio2000functions}) that 
\[
\partial_{\ast} E \subset E^{(1/2)} \subset \partial^\ast E \subset N\cup \partial_{\ast} E
\]
for some $\mathscr{H}^{n-1}$-negligible $N\subset \mathbb{R}^n$.

\begin{definition}\label{defn:Caccioppoli}
A \textit{\textbf{Caccioppoli partition} of $\Omega$ indexed by $I\subset \mathbb{N}$ is a partition $\mathcal{E} = \{E_i\}_{i \in I}$ of $\Omega$ such that $\sum_{i \in I} P(E_i, \Omega) < \infty$}. We assume that $\abs{E_i}>0$ for all $i\in I$ and consider two partitions to be identical up to re-ordering of the indices or the redistribution of sets of measure zero.
\end{definition}

% Let us now state a part of \cite[Proposition~5.9]{ambrosio2000functions}, which will be useful to us for defining our weakest type of filling. 
% \begin{theorem}
% If  $u\colon \Omega \subset \mathbb{R}^n \to \mathbb{R}^m$ is a piecewise constant function, then $Du$ is concentrated on the approximate jump set $J_u$ and $\mathscr{H}^{n-1}(J_u) < \infty$. 
% Moreover, there exists a set $N \subset \mathbb{R}^n$ with $\mathscr{H}^{n-1}(N) = 0$ such that 
% \begin{equation}
% \bigcup_{p \neq q} \partial^{\ast} \{u = p\} \cap \partial^{\ast} \{u = q\}\cap \Omega \quad\subset\quad J_u \quad\subset\quad N \cup \bigcup_{p \neq q} \partial^{\ast} \{u = p\} \cap \partial^{\ast} \{u = q\} \cap \Omega,
% \end{equation}
% and
% \[
% (u^+(x), u^-(x), \unitnormal_u(x)) = (p, q, \unitnormal_{\{u = p\}}(x))
% \]
% for any $x \in \partial^{\ast} \{u = p\} \cap \partial^{\ast} \{u = q\} \cap \Omega$. 
% \end{theorem}

\section{Metric Space Valued BV Functions}\label{sec:metric-valued-BV-functions}

Throughout this section (unless otherwise stated) we let $\Omega$ be a bounded, connected, open set in $\mathbb{R}^n$, we let $I = (0, 1)$, and we let $(Y, \delta)$ be a complete, separable metric space. 
Recall from the introduction that the surface energy of a piecewise-constant filling $w$ from $\Omega\times\paren{0,1}$ into $Y$ is
$$\ProjE_\delta[w] = \int_{S} \delta\paren{w^+,w^-}|\langle {\verticalvec}, \unitnormal\rangle|\,d\mathscr{H}^n$$
where $S$ is a set consisting of points $x_0$ so that $w$ limits to two values $w^+\paren{x},w^-\paren{x}$ in sufficiently small neighborhoods of $x.$ 
A major goal of this section is to show in Theorem~\ref{thm:approximate-discontinuity-rectifiability} that this expression is well-defined for a class of \emph{metric space valued BV functions}. 
This strengthens an earlier result by Ambrosio~\cite{ambrosio1990metric} which required the metric space $Y$ to be separable and proper. 
Later, we will apply this theorem to a countable discrete metric space which is not proper. 

Before we begin Section~\ref{subsec:verticalrestriction}, we review notions of approximate limit, one-sided trace, boundary trace, and define our notion of a metric space valued BV function, first introduced by Ambrosio in \cite{ambrosio1990metric}. 
Afterwards, we show a version of the Lebesgue differentiation theorem for functions in $L^1_{\mathrm{loc}}(\Omega; Y)$, \textit{the class of all measurable functions $f\colon \Omega \to Y$ such that every point of $\Omega$ has an open neighborhood $U\subset \Omega$ satisfying 
\[
\int_U \delta\paren{f, y_0}\,d\mathscr{L}^n < +\infty
\]
for some $y_0 \in Y$.} 
The class of functions for which $U$ can be taken to be $\Omega$ is denoted by $L^1(\Omega; Y).$ 
We include a proof of this fact as an introduction to the arguments we frequently use in \S\ref{sec:metric-valued-BV-functions} and \S\ref{sec:volume}.
In Section~\ref{subsec:verticalrestriction}, we show that the boundary trace of a metric space valued BV function $f$ on $\Omega\times\paren{0,1}$ agrees with those of the vertical restriction $f_x=f\paren{x,\cdot}$ for almost every $x\in \Omega.$ 
This is applied in Section~\ref{sec:horizontal} to prove the second major result of this section, Theorem~\ref{thm:general-bv-statement}. 
As described in the outline, this is a major step towards proving the main theorems of the paper. 
Finally, Section~\ref{subsec:energywelldefined} includes a proof of Theorem~\ref{thm:approximate-discontinuity-rectifiability} which was discussed in the preceding paragraph. 

% The main result towards that end is a boundary trace theorem which states that $\Omega\times\paren{0,1}$ can be decomposed as $C\cup J\cup S$ where the approximate limit of $w$ is well-defined on $C,$ $\mathscr{H}^n\paren{S}=0,$ and for each $x\in S$ the \emph{boundary traces} $w^+\paren{x}, w^-\paren{x}\in Y$ and \emph{approximate normal vector} $\unitnormal(j)\in S^{n}$ are defined so that 
% $$\lim_{r\to 0+} \dashint_{\Omega\cap B_{\unitnormal}^+(x_0, r)} \delta(w, a_0)\,d\mathscr{L}^{n+1} = 0 \qquad \text{ and } \qquad \lim_{r\to 0+} \dashint_{\Omega \cap B_{\unitnormal}^-(x_0, r)} \delta(w, b_0)\,d\mathscr{L}^{n+1} = 0$$ 
%  where $B_{\unitnormal}^{+}(x_0, r) \coloneq B(x_0, r) \cap H_{\unitnormal}^+$ and $B_{\unitnormal}^{-}(x_0, r) = B(x_0, r)\cap H_{\unitnormal}^-$ are the two (open) halves of the ball $B(x_0, r)$ defined by its intersection with the open half-spaces $H_{\unitnormal}^+ \coloneq \{x : \langle x - x_0, \unitnormal\rangle > 0\}$ and $H_{\unitnormal}^- \coloneq \{x  : \langle x - x_0, \unitnormal\rangle < 0\}$. 

We mostly use the same definitions as in Ambrosio's seminal work on metric space valued BV functions~\cite{ambrosio1990metric}, but slightly differ on the notion of an \emph{approximate limit}. 
We say that a Borel function $w\colon \Omega\to Y$ has an \textit{\textbf{approximate limit}} at $x_0 \in \Omega$ if there exists a $y_0 \in Y$ such that
\begin{equation}\label{def:L1-approximate-limit}
\lim_{r\to0+}\dashint_{\Omega \cap B(x_0, r)}\delta (w, y_0)\,d\mathscr{L}^n = 0.
\end{equation}
If the approximate limit $y_0$ exists, it is unique, and will be denoted by $\aplim_{x\to x_0}w(x)$.
The Borel set $S_w$ of points in $\Omega$ where $w$ does not satisfy condition~(\ref{def:L1-approximate-limit}) is called the \textit{\textbf{approximate discontinuity set}}.
Whenever the limit $(\ref{def:L1-approximate-limit})$ holds for a point $x_0 \in \partial \Omega$, we call $y_0$ the \textit{\textbf{boundary trace of}} $w$ at $x_0$.

If the boundary trace of $w$ exists for $\mathscr{H}^{n-1}$-a.e.~$x_0 \in \Sigma\subset \partial \Omega$, we let $w|_\Sigma$ denote the function $w|_\Sigma(x_0) = \aplim_{x\to x_0}w(x)$.

A variation to the notion of an approximate limit is that of a one-sided trace, and a boundary trace.  
For a fixed unit vector $\unitnormal \in \mathbb{S}^{n-1}$ we say that $a_0 \in Y$ is the \textit{\textbf{positive one-sided trace of}} $w$ and $b_0 \in Y$ is the \textit{\textbf{negative one-sided trace of}} $w$ if 
\begin{equation*}\label{def:one-sided-traces}
\lim_{r\to 0+} \dashint_{\Omega\cap B_{\unitnormal}^+(x_0, r)} \delta(w, a_0)\,d\mathscr{L}^n = 0 \qquad \text{ and } \qquad \lim_{r\to 0+} \dashint_{\Omega \cap B_{\unitnormal}^-(x_0, r)} \delta(w, b_0)\,d\mathscr{L}^n = 0
\end{equation*}
where $B_{\unitnormal}^{+}(x_0, r) \coloneq B(x_0, r) \cap H_{\unitnormal}^+$ and $B_{\unitnormal}^{-}(x_0, r) = B(x_0, r)\cap H_{\unitnormal}^-$ are the two (open) halves of the ball $B(x_0, r)$ defined by its intersection with the open half-spaces $H_{\unitnormal}^+ \coloneq \{x : \langle x - x_0, {\unitnormal}\rangle > 0\}$ and $H_{\unitnormal}^- \coloneq \{x  : \langle x - x_0, {\unitnormal}\rangle < 0\}$. 

Whenever a one-sided trace of $w$ exists at $x_0$, we denote it by $w^{\pm}(x_0, \unitnormal)$.

A Borel function $w\colon \Omega\to Y$ such that $\delta(w, y_0) \in L^1(\Omega; \mathbb{R})$ for some $y_0 \in Y$ is said to be a \textit{\textbf{$Y$-valued function of bounded variation}} (or a \textit{\textbf{metric BV function}}) if there exists a finite, positive measure $\sigma$ on $\Omega$ such that for any $1$-Lipschitz real-valued function $\varphi\colon Y \to \mathbb{R}$, 
\begin{equation}\label{def:metric-space-valued-bv-functions}
|D(\varphi\circ w)|(B) \leq \sigma(B) \quad \text{ for all } B \in \mathfrak{B}(\Omega).
\end{equation}
The least such measure $\sigma$ is called the \textbf{\textit{total variation}} of $w$ and is denoted $\abs{Dw}.$ 
We denote the class of all $Y$-valued $\BV$-function by $\BV(\Omega; Y)$. 

Using the separability of $Y$ and the Lebesgue differentiation theorem for real-valued functions, we now show that the Lebesgue differentiation theorem holds for any function in $L^1_{\mathrm{loc}}(\Omega; Y)$. 
\begin{prop}\label{prop:lebesgue-differentiation-thm-for-metric-space-valued-functions}
Let $\Omega \subset \mathbb{R}^n$ be open and let $(Y, \delta)$ be a separable metric space.
If $f \in L^1_{\mathrm{loc}}(\Omega; Y)$ then 
\[
\lim_{r\to 0+}\dashint_{\Omega \cap B(x_0, r)}\delta(f, f(x_0))\,d\mathscr{L}^n = 0 \quad \text{ for } \mathscr{L}^n\text{-a.e.}~x_0 \in \Omega.
\]
\end{prop}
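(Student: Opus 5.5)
The plan is to reduce to the scalar Lebesgue differentiation theorem using a countable dense subset of $Y$. Fix a countable dense set $\{y_k\}_{k\in\N}\subset Y$, which exists because $Y$ is separable. For each $k$, the function $g_k(x)\coloneq\delta(f(x),y_k)$ is measurable and nonnegative, and it lies in $L^1_{\mathrm{loc}}(\Omega)$.

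The last claim needs a short check. By the definition of $L^1_{\mathrm{loc}}(\Omega;Y)$, every point has a neighborhood $U$ with $\int_U\delta(f,y_0)\,d\mathscr{L}^n<\infty$ for some $y_0$. The triangle inequality gives $\delta(f,y_k)\le\delta(f,y_0)+\delta(y_0,y_k)$, and a constant is integrable on $U\cap B$ for any bounded ball $B$. Shrinking $U$ to a bounded neighborhood if necessary, we get $g_k\in L^1_{\mathrm{loc}}(\Omega)$.

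Next, apply the classical Lebesgue differentiation theorem to each $g_k$. This gives an $\mathscr{L}^n$-null set $N_k\subset\Omega$ off which
\[
\lim_{r\to0+}\dashint_{\Omega\cap B(x_0,r)}\abs{g_k-g_k(x_0)}\,d\mathscr{L}^n=0 .
\]
Since $\Omega$ is open, $\Omega\cap B(x_0,r)=B(x_0,r)$ for small $r$, so the usual statement applies verbatim. Set $N\coloneq\bigcup_k N_k$, which is null because the union is countable.

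For $x_0\in\Omega\setminus N$ and any $k$, the triangle inequality gives the pointwise bound
\[
\delta(f(x),f(x_0))\le\delta(f(x),y_k)+\delta(y_k,f(x_0)) = g_k(x)-g_k(x_0)+2g_k(x_0)\le \abs{g_k(x)-g_k(x_0)}+2\delta(f(x_0),y_k).
\]
Averaging over $B(x_0,r)$ and letting $r\to0+$ yields
\[
\limsup_{r\to0+}\dashint\delta(f,f(x_0))\,d\mathscr{L}^n\le 2\delta(f(x_0),y_k)
\]
for every $k$. By density, $\inf_k\delta(f(x_0),y_k)=0$, so the limit is $0$.

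The only point requiring care is measurability of $x\mapsto\delta(f(x),y_k)$, which holds because $f$ is Borel (or measurable) and $\delta(\cdot,y_k)$ is continuous. Beyond that, the main idea is choosing the exceptional set $N$ independently of the point $f(x_0)$, which the countable dense family accomplishes.
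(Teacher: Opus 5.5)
Your proof is correct and follows essentially the same route as the paper: a countable dense set, the scalar Lebesgue differentiation theorem applied to each $\delta(f,y_i)$ off a common null set, and the bound $\delta(f,f(x_0))\le|\delta(f,y_i)-\delta(f(x_0),y_i)|+2\delta(f(x_0),y_i)$ followed by choosing $y_i$ close to $f(x_0)$. Your explicit check via the triangle inequality that $\delta(f,y_k)\in L^1_{\mathrm{loc}}(\Omega)$ is a detail the paper only sketches (it appeals to the $1$-Lipschitz property of $y\mapsto\delta(y,y_i)$).
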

\begin{proof}
Consider a countable dense subset $\{y_i\}$ of $Y$. 
Since every map $y\mapsto \delta(y, y_i)$ is 1-Lipschitz, the map $x\mapsto \delta(f(x), y_i)$ is in $L^1_{\mathrm{loc}}(\Omega)$ for any $i \in \mathbb{N}$ and hence the Lebesgue differentiation theorem applies to each of these maps. 
Furthermore, because $\{y_i\}$ is countable, $\mathscr{L}^n$-a.e.~$x_0\in \Omega$ 
\begin{equation}\label{eq:lebesgue-diff}
\lim_{r\to0+}\dashint_{\Omega \cap B(x_0, r)} |\delta(f, y_i) - \delta(f(x_0), y_i)|\,d\mathscr{L}^n = 0 \quad \text{ for all } i \in \mathbb{N}.
\end{equation}
For any such $x_0$, the triangle inequality gives us
\[
\delta(f, f(x_0)) \leq |\delta(f, y_i) - \delta(f(x_0), y_i)| + 2\delta(f(x_0), y_i) \quad \text{ for any } i \in \mathbb{N}. 
\]
For any $\varepsilon > 0$ we may choose a $y_i$ such that $\delta(f(x_0), y_i) < \varepsilon$ and therefore 
\[
\limsup_{r\to0+}\dashint_{\Omega \cap B(x_0, r)}\delta(f, f(x_0))\,d\mathscr{L}^n \leq 2\varepsilon. 
\]
Letting $\varepsilon \to 0$ proves the proposition.
\end{proof}

\subsection{Boundary Traces of Vertical Restrictions}
\label{subsec:verticalrestriction}
% In Section~\ref{sec:volume} we introduce the notion of a \emph{filling} between piecewise constant (PC) functions defined on the ends of a cylinder $\Omega\times I$.
% Loosely speaking, a filling will be defined as any $Y$-valued piecewise constant function on the cylinder, whose boundary trace agrees with the two PC functions defined on the ends of the cylinder. 
% Within the proof of the existence of the boundary trace for any $w \in \BV(\Omega\times I; Y)$, we prove the existence of the boundary trace for almost every one of its vertical restrictions.
In this subsection, we investigate properties of vertical restrictions of metric $\BV$ functions on $\Omega\times I$. 
A \textit{\textbf{vertical restriction}} of a function (not necessarily $\BV$) $w\colon \Omega\times I \to Y$ \textbf{\textit{at}} $x \in \Omega$ is the function
\[
w_x\colon I\to Y; \quad w_x(t) \coloneq w(x, t). 
\]
We start by proving two independent lemmas for $\BV$-curves: the endpoint (boundary) traces of BV-curves exist and their distance is bounded above by its total variation, and show in Lemma~\ref{lem:restriction-is-almost-always-bv} that almost every 
restriction of a metric $\BV$ function is a $\BV$-curve. 
Afterwards we show the agreement between the boundary trace of a metric $\BV$ function on the cylinder and the endpoint trace of almost every one of its vertical restrictions. 
We wait until \S\ref{subsec:energywelldefined} to prove the existence of boundary traces of metric $\BV$ functions. 

The \textit{\textbf{variation}} of a function $\gamma\colon I \to Y$, within $J\subset I$, is defined by the formula
\begin{equation}\label{def:variation}
\variation\paren{\gamma; J} = \sup\Big\{\sum_{i = 1}^l \delta(\gamma(t_i), \gamma(t_{i+1})) : t_1 \leq \cdots \leq t_{l+1} \text{ in } J \Big\},
\end{equation}
and its \textit{\textbf{essential variation}}, defined by
\begin{equation}\label{def:essential_variation}
\essvariation\paren{\gamma; J} \coloneq \inf_{\hat{\gamma} = \gamma\ (\mathscr{L}\allardell J)\text{-a.e.}} \variation\paren{\hat{\gamma}; J}. 
\end{equation}
A function \textit{$\gamma\colon I\to Y$ is in $\BV(I; Y)$ if and only if $\essvariation\paren{\gamma; I} < +\infty$, and moreover, $|D\gamma|(I) = \essvariation\paren{\gamma; I}$}~\cite{ambrosio1990metric}.
We call any such function a \textit{\textbf{$\BV$-curve}} in $Y$.
% \begin{remark}
% The reader may recognize that our formula (\ref{def:variation}) for variation is also used to define the length of a curve in a metric space. 
% Because the definition of a \emph{curve} requires the function defining it to be continuous, we instead use term variation. 
% \end{remark}

\begin{lemma}\label{lem:BV-curve_boundary_trace}
If $\gamma$ is a $\BV$-curve in a complete metric space $(Y, \delta)$, then there exist unique $y_0$ and $y_1$ in $Y$ such that 
\begin{equation}\label{def:BV-curve_boundary_trace}
\lim_{r\to 0+}\frac{1}{r}\int_{(0, r)}\delta(\gamma(t), y_0)\,dt = 0 \quad \text{ and } \quad \lim_{r\to 0}\frac{1}{r}\int_{(1-r, 1)}\delta(\gamma(t), y_1)\,dt = 0.
\end{equation}
Moreover, the endpoint traces satisfy
\begin{equation}\label{ineq:endpoint-trace-bound}
\delta(y_0, y_1) \leq |D\gamma|(I).
\end{equation}
\end{lemma}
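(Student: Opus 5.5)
Since $\gamma$ is a $\BV$-curve, $\essvariation(\gamma; I) = |D\gamma|(I) < +\infty$. The plan is to replace $\gamma$ by a good representative, use completeness of $Y$ to get one-sided limits of that representative at the endpoints, and then show these limits serve as the endpoint traces in~(\ref{def:BV-curve_boundary_trace}).

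\emph{Step 1: a good representative.} By the definition~(\ref{def:essential_variation}) of essential variation, for each $\varepsilon > 0$ there is $\hat\gamma$ with $\hat\gamma = \gamma$ $\mathscr{L}$-a.e.\ on $I$ and $\variation(\hat\gamma; I) \le |D\gamma|(I) + \varepsilon < \infty$. Fix one such $\hat\gamma_\varepsilon$ for each $\varepsilon$.

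\emph{Step 2: existence of endpoint limits.} We show $\lim_{t\to0+}\hat\gamma(t)$ exists by a Cauchy argument. Suppose not. Since $Y$ is complete, the net is then not Cauchy, so there is $\eta>0$ and a decreasing sequence $s_1 > s_2 > \cdots \to 0$ with $\delta(\hat\gamma(s_{k}), \hat\gamma(s_{k+1})) \ge \eta$ for all $k$. Feeding finitely many of these points into~(\ref{def:variation}) gives $\variation(\hat\gamma; I) \ge m\eta$ for every $m$, contradicting finite variation. Hence $y_0 := \lim_{t\to 0+}\hat\gamma(t)$ exists, and likewise $y_1 := \lim_{t\to 1-}\hat\gamma(t)$.

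\emph{Step 3: these limits are the traces.} Since $\gamma = \hat\gamma$ a.e.,
\[
\frac1r\int_{(0,r)}\delta(\gamma(t), y_0)\,dt = \frac1r\int_{(0,r)}\delta(\hat\gamma(t), y_0)\,dt \le \sup_{t\in(0,r)}\delta(\hat\gamma(t), y_0) \to 0 .
\]
The argument at $t = 1$ is identical. The traces are unique: if $y_0$ and $y_0'$ both satisfy~(\ref{def:BV-curve_boundary_trace}), then integrating the triangle inequality gives $\delta(y_0, y_0') \le \frac1r\int_{(0,r)}\bigl(\delta(\gamma, y_0) + \delta(\gamma, y_0')\bigr)\,dt \to 0$. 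Uniqueness also shows that $y_0, y_1$ do not depend on the choice of $\hat\gamma_\varepsilon$.

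\emph{Step 4: the bound~(\ref{ineq:endpoint-trace-bound}).} For any $0 < s < t < 1$, using just two partition points,
\[
\delta(\hat\gamma_\varepsilon(s), \hat\gamma_\varepsilon(t)) \le \variation(\hat\gamma_\varepsilon; I) \le |D\gamma|(I) + \varepsilon .
\]
Let $s \to 0+$ and $t \to 1-$; by continuity of $\delta$ this yields $\delta(y_0, y_1) \le |D\gamma|(I) + \varepsilon$. Letting $\varepsilon \to 0$ gives~(\ref{ineq:endpoint-trace-bound}).

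The main obstacle is Step 2, the only place completeness of $Y$ is used. The key point is that finite pointwise variation forces the one-sided nets to be Cauchy. Everything else is routine, given the characterization $|D\gamma|(I) = \essvariation(\gamma; I)$ quoted from~\cite{ambrosio1990metric}.
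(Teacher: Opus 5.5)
Your proof is correct and follows the paper's route: take a finite-variation representative $\hat\gamma$, use completeness to get its one-sided endpoint limits, identify them with the traces of $\gamma$ (with uniqueness via the triangle inequality), and pass to the infimum over representatives (your $\varepsilon$-representatives do exactly this). The only difference is cosmetic. In Step~2 the paper gets the Cauchy property directly from the monotone bound $\delta(\hat\gamma(r),\hat\gamma(t)) \le \variation(\hat\gamma;(0,t]) - \variation(\hat\gamma;(0,r])$ rather than by contradiction. Your contradiction argument also needs either interleaved pairs $a_1>b_1>a_2>b_2>\cdots$ or $\eta$ replaced by $\eta/2$ to obtain consecutive jumps.
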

\begin{proof}
Let $\hat{\gamma}$ be a function from $I$ to $Y$ with $\variation\paren{\hat{\gamma}; I} < +\infty$ such that $\hat{\gamma} = \gamma$ $\mathscr{L}$-a.e.~in $I$. 
Assume, for the moment, its one-sided limits 
\begin{equation}\label{def:one-sided-limits}
\lim_{t\to 0+}\hat{\gamma}(t) \quad \text{ and } \quad \lim_{t\to 1-}\hat{\gamma}(t),
\end{equation}
exist, and that we denote them by $y_0$ and $y_1$, respectively.
Then since one-sided traces agree with one-sided limits whenever the one-sided limits exist, the one-sided traces of $\hat{\gamma}$ are $y_0$ and $y_1$.
Now, because $\hat{\gamma}$ and $\gamma$ agree $\mathscr{L}$-almost everywhere in $I$, the one-sided traces (\ref{def:BV-curve_boundary_trace}) of $\gamma$ equals $y_0$ and $y_1$.  
The uniqueness of the points $y_0$ and $y_1$ satisfying (\ref{def:BV-curve_boundary_trace}) follows from the triangle inequality. 

Our only task is then to show the one-sided limits (\ref{def:one-sided-limits}) exist.
First, because the function $t\mapsto \variation\paren{\hat{\gamma}; (0,t]}$ defined on $I$ is bounded and monotonic, its one sided limits 
\[
\lim_{t\to 0+}\variation\paren{\hat{\gamma}; (0, t]} \quad \text{ and } \quad \lim_{t\to 1-}\variation\paren{\hat{\gamma}; (0, t]}
\]
exist.
Thus, since Cauchy sequences converge in $Y$, 
\begin{equation}\label{eq:bound_for_0}
\delta(\hat{\gamma}(r), \hat{\gamma}(t)) \leq \variation\paren{\hat{\gamma}; (0, t]} - \variation\paren{\hat{\gamma}; (0, r]}\quad \text{ for }  \quad 0 < r < t < 1,
\end{equation}
implies that the limits $y_0 \coloneq \lim_{t \to 0+}\hat{\gamma}(t)$ and $y_1 \coloneq \lim_{t \to 1-}\hat{\gamma}(t)$ exist.

We conclude our proof by showing inequality (\ref{ineq:endpoint-trace-bound}).
First, inequality (\ref{eq:bound_for_0}) and continuity of $\delta$ together imply $\delta(y_0, y_1) \leq \variation\paren{\hat{\gamma}; I}$.
Then, because the limits $y_0$ and $y_1$ are unique and in particular, do not depend on the representative for $\gamma$, taking the infimum over all representatives of $\gamma$ gives 
\[
\delta(y_0, y_1) \leq \inf_{\hat{\gamma} = \gamma \text{ a.e.}}\variation\paren{\hat{\gamma}; I} = \essvariation\paren{\gamma; I} = |D\gamma|(I).
\]
\end{proof}

\textbf{NOTATION.} To stay consistent between the notation we are using for the boundary trace of a $\BV$-functions whose domain is not simply an interval, we will denote the points $y_0$ and $y_1$ in $Y$ which satisfy the limit equations (\ref{def:BV-curve_boundary_trace}), by $\gamma|_{0}$ and $\gamma|_{1}$, respectively.

\begin{lemma}\label{lem:restriction-is-almost-always-bv}
Let $\Omega$ be a bounded, connected, open subset of $\mathbb{R}^n$, let $I = (0, 1)$, and let $(Y, \delta)$ be a complete separable metric space. 
If $w \in \BV(\Omega\times I; Y)$ then for $\mathscr{L}^n$-a.e.~$x\in \Omega$, the vertical restriction $w_x \in \BV(I; Y)$.
\end{lemma}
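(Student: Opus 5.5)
The plan is to reduce to the scalar slicing theorem for real-valued BV functions, using a countable family of 1-Lipschitz test functions built from a dense subset of $Y$. Fix a countable dense set $\{y_i\}\subset Y$ and set $\varphi_i(y)=\delta(y,y_i)$. Each $\varphi_i$ is 1-Lipschitz, and $\varphi_i\circ w\in L^1(\Omega\times I)$ because $\delta(w,y_i)\le \delta(w,y_0)+\delta(y_0,y_i)$ and $\Omega$ is bounded. By (\ref{def:metric-space-valued-bv-functions}), $\varphi_i\circ w\in\BV(\Omega\times I)$ with $|D(\varphi_i\circ w)|\le|Dw|$. The same holds for every finite maximum $\varphi_F=\max_{i\in F}\varphi_i$ over finite $F\subset\mathbb{N}$. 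There are countably many such $\varphi_F$, and each is still 1-Lipschitz.

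Next I apply the classical slicing theorem for scalar BV functions in the direction $\verticalvec$ (\cite[Theorem 3.103]{ambrosio2000functions}). For $u\in\BV(\Omega\times I)$, for $\mathscr{L}^n$-a.e.\ $x$ the slice $u_x$ lies in $\BV(I)$, $x\mapsto|Du_x|(I)$ is measurable, and
\[
\int_\Omega |Du_x|(I)\,d\mathscr{L}^n(x)=|D_{n+1}u|(\Omega\times I)\le |Du|(\Omega\times I).
\]
Applying this to all the countably many $u=\varphi_F\circ w$, I get a full-measure set $G\subset\Omega$ with the following properties for $x\in G$. First, $w_x$ is measurable and $\delta(w_x,y_0)\in L^1(I)$, by Fubini. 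Second, every $(\varphi_F\circ w)_x=\varphi_F\circ w_x$ is in $\BV(I)$. Third, $M(x):=\sup_F|D(\varphi_F\circ w_x)|(I)<\infty$.

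The third property is the main obstacle. For each single $F$ the integral is at most $|Dw|(\Omega\times I)$, but a supremum over countably many $F$ is not controlled by integrating each term separately. To handle this I enumerate the finite sets increasingly, $F_1\subset F_2\subset\cdots$ with $\bigcup_k F_k=\mathbb{N}$. Since $\varphi_{F_k}\to\inf$-type limits are awkward, I instead bound variations directly. For any partition $t_1\le\dots\le t_{l+1}$ of Lebesgue points of $w_x$, choose for each $j$ an index $i_j$ with $y_{i_j}$ close to $w_x(t_{j+1})$. Then
\[
\delta(w_x(t_j),w_x(t_{j+1}))\approx|\varphi_{i_j}(w_x(t_j))-\varphi_{i_j}(w_x(t_{j+1}))|,
\]
and this is at most $\variation(\varphi_{i_j}\circ w_x;[t_j,t_{j+1}])$. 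I would then sum over $j$ and use that, for the pointwise supremum-type measure, the measures $\mu_x:=\bigvee_i|D(\varphi_i\circ w_x)|$ (the least upper bound of countably many measures) satisfy $\essvariation(w_x;I)\le\mu_x(I)$.

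It remains to show $\mu_x(I)<\infty$ a.e. For this I use the lattice identity $\bigvee_i\nu_i(I)=\sup_k\sup\{\sum_{i\le k}\nu_i(A_i)\}$ over disjoint Borel $A_i$. Slicing commutes with restriction to product sets $A\times J$, and slices are taken only on product sets, so I would work with finitely many disjoint intervals. Then $\sum_{i\le k}|D(\varphi_i\circ w)_x|(J_i)$ integrates to at most $\sum_i|D(\varphi_i\circ w)|(\Omega\times J_i)\le|Dw|(\Omega\times I)$. Taking the supremum over countably many rational interval configurations, monotone convergence gives
\[
\int_\Omega\mu_x(I)\,dx\le|Dw|(\Omega\times I)<\infty .
\]
So $\mu_x(I)<\infty$ for a.e.\ $x$. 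Combining this with the variation estimate, and using a good representative to pass from Lebesgue points to $\essvariation$, gives $\essvariation(w_x;I)<\infty$. By the characterization quoted before Lemma~\ref{lem:BV-curve_boundary_trace}, this means $w_x\in\BV(I;Y)$. As a byproduct, the argument should also yield $\int_\Omega|Dw_x|(I)\,dx\le|Dw|(\Omega\times I)$, which is used later.
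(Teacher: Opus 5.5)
Your skeleton is the paper's: distance functions $\varphi_i=\delta(\cdot,y_i)$ to a dense set, scalar slicing, the least upper bound $\mu_x=\bigvee_i|D(\varphi_i\circ w)_x|$, and the estimate $\delta(w_x(s),w_x(t))\le\mu_x((s,t))$ at good points $s<t$. The maxima $\varphi_F$ and the quantity $M(x)$ play no role and can be dropped. The paper gets $\int_\Omega\mu_x(I)\,dx\le|Dw|(\Omega\times I)$ by citing Ambrosio's identity $\int_\Omega\bigvee_i|D(f_i)_x|\,d\mathscr{L}^n(x)=\bigvee_i|D_{n+1}f_i|$. You try to prove this directly, and as written that step does not go through. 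For each \emph{fixed} configuration $c=(J_1,\dots,J_k)$ you bound the integral of $g_c(x)=\sum_{i\le k}|D(\varphi_i\circ w)_x|(J_i)$ by $\sum_i|D(\varphi_i\circ w)|(\Omega\times J_i)\le|Dw|(\Omega\times I)$. That controls $\sup_c\int_\Omega g_c$, not $\int_\Omega\sup_c g_c$. Monotone convergence cannot bridge the two, because the optimal configuration depends on $x$: the pointwise maximum of two $g_c$'s is in general not dominated by any single $g_{c''}$. For example, let $\nu_1,\nu_2$ be the Dirac masses at $1/4$ and $3/4$ for $x$ in one half of $\Omega$, swapped on the other half. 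Then every fixed configuration captures at most half of $\int_\Omega(\nu_1\vee\nu_2)(I)\,dx$. This is exactly the obstacle you flagged for the $\varphi_F$, reappearing.

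The repair uses the full disintegration $|D_{n+1}u|(B)=\int_\Omega|Du_x|(B_x)\,d\mathscr{L}^n(x)$ for every Borel $B\subset\Omega\times I$, not just for sets of the form $\Omega\times J$. Take configurations $c_1,\dots,c_m$ with $c_l=(J^l_1,\dots,J^l_{k_l})$, and let $\Omega_l$ be the measurable set where $c_l$ is the first maximizer among $g_{c_1}(x),\dots,g_{c_m}(x)$. Then
\[
\int_\Omega\max_{l\le m}g_{c_l}\,d\mathscr{L}^n=\sum_{l}\sum_{i\le k_l}|D_{n+1}(\varphi_i\circ w)|(\Omega_l\times J^l_i)\le\sum_{l,i}|Dw|(\Omega_l\times J^l_i)\le|Dw|(\Omega\times I),
\]
because the sets $\Omega_l\times J^l_i$ are pairwise disjoint. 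Now monotone convergence in $m$ is legitimate. Equivalently, the least upper bound on the product is computed over Borel partitions of $\Omega\times I$, whose slices may vary with $x$; this is the content of the identity the paper cites. You should also justify that configurations of finitely many rational intervals suffice to compute $\mu_x(I)$, via inner regularity and separation of disjoint compact sets; this is also what makes $x\mapsto\mu_x(I)$ measurable. With these fixes, your Lebesgue-point estimate plus a right-limit representative (which uses completeness of $Y$) gives $\essvariation(w_x;I)\le\mu_x(I)$. That is a valid alternative to the paper's last step, which instead checks $|D(\varphi\circ w_x)|(J)\le\mu_x(J)$ directly for every 1-Lipschitz $\varphi$.
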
 
\begin{proof}
By definition of $w\in \BV(\Omega\times I; Y)$ we can choose a finite Radon measure $\mu$ on $\Omega\times I$ such that
\[
|D(\varphi \circ w)|\leq \mu \quad \text{ for any } \varphi \in \mathrm{Lip}_1(Y; \mathbb{R}).
\]
In the following we show that there is a single Radon measure with the same property on $w_x$ for $\mathscr{L}^n$-a.e.~$x\in \Omega$. 

First we choose a countable dense subset $\{y_i\}$ of $Y$ and define 
\[
f_i(x, t)\coloneq \delta(w(x, t), y_i) \quad \text{ for all } (x, t) \in \Omega\times I.
\]
Because $f_i \in \BV(\Omega\times I; \mathbb{R})$, the theorem for 1-dimensional restrictions of scalar-valued $\BV$ functions implies that for $\mathscr{L}^n$-a.e.~$x \in \Omega$ the one dimensional restriction $(f_i)_x$ belongs to $\BV(I; \mathbb{R})$. 
Since $\{f_i\}$ are countable, there exists an $\mathscr{L}^n$-negligible set $N$ such that for any $i \in \mathbb{N}$, the one-dimensional restriction $(f_i)_x$ belongs to $\BV(I)$ for all $x \in \Omega\setminus N$. 

Now for such an $x \in \Omega\setminus N$, consider $\bigvee_{i} |D(f_i)_x|$,  defined as the \textit{least measure dominating all the measures in} $\{|D(f_i)_x| : i \in \mathbb{N}\}$. 
By \cite[(1.7), (1.8)]{ambrosio1990metric}, 
\[
\int_\Omega \bigvee_i |D(f_i)_x|\,d\mathscr{L}^n(x) = \bigvee_i \int_\Omega |D(f_i)_x|\,d\mathscr{L}^n(x) = \bigvee_i |D_{n+1}f_i| \leq \bigvee_i|Df_i| \leq \mu,
\]
and in particular, 
\[
\int_\Omega \Big(\bigvee_i |D(f_i)_x|\Big)(I)\,d\mathscr{L}^n(x) \leq \mu(\Omega\times I) < +\infty.
\]
Thus there exists an $\mathscr{L}^n$-negligible set $N'$ such that  $\vee_i |D(f_i)_x|$ is a finite Radon measure on $I$ for every $x\in \Omega\setminus N'$, which in addition, bounds $|D(f_i)_x|$  from above for any $i \in \mathbb{N}$ and $x \in \Omega\setminus (N\cup N')$.
Fix $x \in \Omega \setminus (N \cup N')$. 
For the remainder of the proof, we will show $|D(\varphi\circ w_x)|\leq \vee_i |D(f_i)_x|$ for any $\varphi \in \mathrm{Lip}_1(Y; \mathbb{R})$.

Any $(f_i)_x$ has a representative that is continuous outside the set of jump points of the scalar-valued function \cite[Theorem 3.28 (b)]{ambrosio2000functions}, and moreover, the set of jump points is at most countable since $|D(f_i)_x|(I) < +\infty$. 
Therefore we may choose a subset $E_x$ of $I$, with $\mathscr{L}(I\setminus E_x) = 0$, on which any $(f_i)_x$ agrees with a good representative that is continuous on $E_x$.
Let $s < t$ in $E_x$.
Since $|(f_i)_x(t) - (f_i)_x(s)| \leq |D(f_i)_x|(s, t) \leq \Big(\bigvee_i |D(f_i)_x|\Big)(s, t)$, we get
\begin{equation}\label{ineq:distance-bound}
\delta\paren{w_x(t), w_x(s)} = \sup_{i}|(f_i)_x(t) - (f_i)_x(s)| \leq \Big(\bigvee_i |D(f_i)_x|\Big)(s, t).
\end{equation}
Let $\varphi \in \mathrm{Lip}_1(Y)$. 
To show $w_x \in \BV(I; Y)$, we first notice that $\varphi\circ w_x \in L^1(I)$ follows from the bound $|\varphi\circ w_x| \leq |\varphi(y_1)| + (f_1)_x$. 
To conclude, it suffices to show that $|D(\varphi\circ w_x)|(J) \leq \bigvee_i |D(f_i)_x|(J)$ for any open interval $J\subset I$. 
Towards that end, choose any finite sequence $t_0 < \cdots < t_m$ in $E_x\cap J$. 
Inequality (\ref{ineq:distance-bound}) together with the fact that $\varphi$ is 1-Lipschitz, gives us 
\[
\sum_k |\varphi(w_x(t_k)) - \varphi(w_x(t_{k-1}))| \leq \sum_k \big(\bigvee_i |D(f_i)_x|\big)(t_{k-1}, t_k) \leq \big(\bigvee_i|D(f_i)_x|\big)(J).
\]
Taking the supremum of the left-hand side over all partitions $t_0 < \cdots < t_m$ in $E_x \cap J$ gives us $|D(\varphi\circ w_x)|(J) \leq \bigvee_i |D(f_i)_x|(J)$, therefore concluding the proof.
\end{proof}

\begin{lemma}\label{lem:boundary-trace-endpoint-trace-agreement}
Let $\Omega$ be a bounded, connected, open subset of $\mathbb{R}^{n}$ with Lipschitz boundary, and let $(Y, \delta)$ be a complete separable metric space.
If the boundary trace $w|_{\partial (\Omega\times I)}(x, t)$ of $w \in \BV(\Omega\times I; Y)$ exists for a.e.~$(x, t)\in \Omega\times\{0, 1\}$, then
\begin{equation}\label{cond:traces_of_slices}
w_x|_{\partial (0,1)}(t) = w|_{\partial \cyl}(x, t) \quad \text{ for a.e.}~(x, t)\in \Omega\times\{0, 1\}. 
\end{equation}
\end{lemma}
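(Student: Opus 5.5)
We treat the bottom face $\Omega\times\{0\}$; the top face is symmetric. The plan is to compare the two traces through cylindrical averages, using the countable dense set $\{y_i\}\subset Y$ to reduce to scalar statements, as in Lemma~\ref{lem:restriction-is-almost-always-bv}.

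\emph{Step 1: reduction to scalar functions.} Fix $f_i=\delta(w,y_i)\in\BV(\Omega\times I)$. By Lemma~\ref{lem:restriction-is-almost-always-bv} and Lemma~\ref{lem:BV-curve_boundary_trace}, for $\mathscr{L}^n$-a.e.\ $x$ the curve $w_x$ is $\BV$ and has an endpoint trace $w_x|_0$. By continuity of $\delta(\cdot,y_i)$, the endpoint trace of $(f_i)_x$ is $\delta(w_x|_0,y_i)$. Since $\delta(a,b)=\sup_i|\delta(a,y_i)-\delta(b,y_i)|$, it suffices to show that for each $i$ and a.e.\ $x$,
\[
\delta\bigl(w_x|_0,y_i\bigr)=\delta\bigl(w|_{\partial\cyl}(x,0),y_i\bigr).
\]
The countably many exceptional sets can then be discarded.

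\emph{Step 2: the trace of $w$ controls the scalar trace.} At a point $(x_0,0)$ where the boundary trace $y_0=w|_{\partial\cyl}(x_0,0)$ exists, the $1$-Lipschitz bound $|f_i-\delta(y_0,y_i)|\le\delta(w,y_0)$ shows that $\delta(y_0,y_i)$ is the (ball-average) boundary trace of the scalar $\BV$ function $f_i$ at $(x_0,0)$.

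\emph{Step 3: the scalar theorem.} For scalar $g\in\BV(\Omega\times I)$ on the flat face, the classical trace $\mathrm{Tr}\,g\in L^1(\Omega)$ satisfies
\[
\int_\Omega |g(x,t)-\mathrm{Tr}\,g(x)|\,dx\le|Dg|(\Omega\times(0,t))\to 0 .
\]
This is proved by the fundamental theorem of calculus on slices in the $t$-direction, first for smooth $g$ and then by strict approximation.

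Applying this to $g=f_i$ gives, for a.e.\ $x$, two identifications:
\begin{itemize}
\item[(a)] $\mathrm{Tr}\,f_i(x)$ equals the endpoint limit of the slice $(f_i)_x$, using slicing of scalar $\BV$ functions together with the fact that the good representative of $(f_i)_x$ is continuous off a countable set.
\item[(b)] $\mathrm{Tr}\,f_i(x)$ equals the Lebesgue-point ball-average trace at $(x,0)$. This holds by the standard fact that the $L^1$ trace is attained in the sense $\lim_r r^{-n-1}\int_{B((x,0),r)\cap\cyl}|f_i-\mathrm{Tr}\,f_i(x)|=0$ for $\mathscr{H}^n$-a.e.\ $x$ \cite[Theorem 3.87]{ambrosio2000functions}.
\end{itemize}
Combining (a), (b) and Step 2 gives the identity displayed in Step 1, which completes the argument.

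The main obstacle is identification (a): the slice trace $w_x|_0$ is a pointwise/one-sided limit along a single vertical line, whereas the boundary trace is a ball average. Bridging these requires the scalar slicing theorem for traces, namely that for a.e.\ $x$ the slice of the trace is the trace of the slice. I would prove it by integrating in $x$ the one-dimensional estimate $|(f_i)_x(t)-(f_i)_x|_0|\le|D(f_i)_x|(0,t)$. This yields
\[
\int_\Omega\bigl|f_i(x,t)-(f_i)_x|_0\bigr|\,dx\le\int_\Omega|D(f_i)_x|(0,t)\,dx\le|Df_i|(\Omega\times(0,t))\to0 .
\]
So $x\mapsto(f_i)_x|_0$ is the $L^1$ limit of $f_i(\cdot,t)$, and hence coincides a.e.\ with $\mathrm{Tr}\,f_i$. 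The Lipschitz-boundary hypothesis is only needed to invoke the standard trace theorem on the cylinder.
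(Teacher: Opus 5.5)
Your proof is correct. It has the same skeleton as the paper's: reduce to $f_i=\delta(w,y_i)$ through a countable dense set, use the $1$-Lipschitz bound to see that $\delta(p(x),y_i)$ is the boundary trace of $f_i$ at $(x,0)$, and conclude from $\delta(a,b)=\sup_i|\delta(a,y_i)-\delta(b,y_i)|$. The scalar step, however, is genuinely different.

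The paper extends $f_i$ by zero to $\mathbb{R}^{n+1}$, implicitly relying on the Lipschitz boundary so that the extension lies in $\BV(\mathbb{R}^{n+1})$. It then invokes the fine slicing theorem \cite[Theorem 3.108, Remark 3.109]{ambrosio2000functions}: for a.e.\ $x$, the jump set of the vertical slice is the slice of the jump set, with matching one-sided limits. This forces a case split according to whether $(x,0)$ is a point of approximate continuity or a jump point of the extension.

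You use only the total-variation slicing identity $\int_\Omega|Dg_x|\,dx=|D_{n+1}g|\le|Dg|$, which is already used in Lemma~\ref{lem:restriction-is-almost-always-bv}. With it you show that the slice endpoint traces are the $L^1$ limit of the horizontal sections $g(\cdot,t)$, with quantitative rate $|Dg|(\Omega\times(0,t))$. You then pass to ball averages through the Lebesgue-point property of the classical trace \cite[Theorem 3.87]{ambrosio2000functions}. This is essentially Giusti's construction of the trace. It avoids the structure theory of jump sets, the zero extension and the case analysis, and it is quantitative.

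The step you delegate, identifying your $L^1$ limit with $\mathrm{Tr}\,f_i$, is standard. It also follows from the Gauss--Green formula tested with $\psi(x)\chi(t)\boldsymbol{e}_{n+1}$. You could even bypass $\mathrm{Tr}$ entirely, using three ingredients:
\begin{itemize}
\item the localized estimate $\int_{B(x_0,r)}|g(x,t)-g_x|_0|\,dx\le|Dg|(B(x_0,r)\times(0,t])$;
\item Lebesgue points of $x\mapsto g_x|_0$;
\item the fact that $r^{-n}|Dg|(B(x_0,r)\times(0,r))\to0$ for $\mathscr{L}^n$-a.e.\ $x_0$, because $|Dg|(\Omega\times(0,\eta))\to0$ as $\eta\to0$.
\end{itemize}
Together these give the ball-average trace directly, so your route does not need $\partial\Omega$ to be Lipschitz at all.

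One technicality: the bound $|g_x(t)-g_x|_0|\le|Dg_x|((0,t])$ holds for the good representative of each slice. You should therefore apply it for a.e.\ $t$ via Fubini, or average over $t\in(0,r)$, rather than for every $t$.
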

\begin{proof}
In the following, we prove (\ref{cond:traces_of_slices}) for $t = 0$. 
The case of $t = 1$ follows from a similar argument. 
Suppose $p(x) \coloneq w|_{\partial (\Omega\times I)}(x, 0)$ exist for a.e.~$x \in \Omega$.
We start our proof the same as in our proof of the existence of boundary traces for metric space valued BV functions. 
In particular, we choose a countable dense subset $\{y_i\}_{i \in \mathbb{N}}$ of $Y$ and for each $i$, define
\[
v_i(x) \coloneq \delta(w(x), y_i) \quad \text{ for all } x \in \cyl.
\] 
(\emph{Step 1}) In the first step, we prove 
\[
(v_i)_x|_{\partial I}(0) = \delta(p(x), y_i) \quad \text{ for a.e.}~x \in \Omega.
\]
Suppose the boundary trace of $w$ exists at the point $(x, 0)$, and is equal to $p(x)$.
The reverse triangle inequality implies $|\delta(w, y_i) - \delta(p(x), y_i)| \leq \delta(w, p(x))$, and hence $v_i|_{\partial (\Omega\times I)}(x, 0) = \delta(p(x), y_i)$.

To show that the endpoint (boundary trace) of the vertical restriction of $v_i$ is also $\delta(p(x), y_i)$ we extend $v_i$ to all of $\mathbb{R}^{n+1}$ by defining 
\[
\bar{v}_i(x,t) \coloneq
\begin{cases}
v_i(x, t) & \text{ if } (x, t) \in \Omega\times  I\\
0 & \text{ otherwise}
\end{cases}.
\]
Notice that because the one-sided approximate limit $(\bar{v}_i)_{\verticalvec}^+(x, 0)$ in the positive vertical direction $\verticalvec$ agrees with the boundary trace $v_i|_{\partial(\Omega\times I)}(x, 0)$, 
\[
(\bar{v}_i)^+_{\verticalvec}(x, 0) = \delta(p(x), y_i) \quad \text{ and } \quad (\bar{v}_i)^-_{\verticalvec}(x, 0) = 0.
\]

By Remark 3.109 (also see Theorem 3.108(a)) of AFP's monograph \cite{ambrosio2000functions}, for $\mathscr{L}^{n}$-a.e.~$x \in \Omega$, 
\begin{equation}\label{thm:jump_set_of_restriction_commutative}
\{x\}\times J_{(\bar{v}_i)_x} = J_{\bar{v}_i}\cap\{(x, t) : t \in \mathbb{R}\}.
\end{equation}

Let $x$ be a point in $\Omega$ satisfying (\ref{thm:jump_set_of_restriction_commutative}) along with the one-sided identities of Remark 3.109.
There are two cases to consider: either $\delta(p(x_0), y_i) = 0$, or $\delta(p(x_0), y_i) > 0$. 

Assuming the former, then $\bar{v}_i$ is approximately continuous at $(x, 0)$, and therefore, (\ref{thm:jump_set_of_restriction_commutative}) implies $(\bar{v}_i)_{x}$ is also approximately continuous at $t = 0$.
Because the one-sided traces of $\bar{v}_i$ and $(\bar{v}_i)_{x}$ are $0$ in the negative $\verticalvec$ direction, this would require their one-sided traces in the positive $\verticalvec$ direction to also be $0$, and thus, the endpoint trace of $(\bar{v}_i)_{x}$ at $t = 0$ equals $\delta(p(x), y_i)$. 

In the case that $\delta(p(x), y_i) > 0$, then $(x, 0)$ is a jump point of $\bar{v}_i$, and therefore, $t = 0$ is a jump point of its vertical restriction $(\bar{v}_i)_{x}$.
By definition, the one-sided trace of $(\bar{v}_i)_x$ exists at $0$ and is equal to the end-point trace of $(v_i)_{x}$ at $0$, so by \cite[Remark 3.109]{ambrosio2000functions}, 
\[
(v_i)_x|_{\partial  I}(0) = \lim_{r\to0+}\dashint_{(0, r)} (\bar{v}_i)_x\,d\mathscr{L} = (\bar{v}_i)^+_{\verticalvec}(x, 0).
\]

(\emph{Step 2})
Fix an $x \in \Omega$ such that Step 1 holds for every $i$, $w_x \in \BV(I; Y)$, and its endpoint trace exists at $0$. 
Set $y^\ast \coloneq w_x|_{\partial I}(0)$.
Lemma~\ref{lem:restriction-is-almost-always-bv} and countability of $y_i$ implies such conditions hold $\mathscr{L}^n$-a.e.~ in $\Omega$.
The reverse triangle inequality implies
\[
\lim_{r\to0+}\dashint_{(0, r)}|\delta(w_x(t), y_i) - \delta(y^\ast, y_i)|\,dt \leq \lim_{r\to0+}\dashint_{(0, r)} \delta(w_x(t), y^\ast)\,dt = 0 \quad \text{ for any } i \in \mathbb{N}.
\]
Hence, $(v_i)_x|_{\partial I}(0) = \delta(y^\ast, y_i)$.
However, we already showed that the endpoint trace of $(v_i)_x$ is equal to $\delta(p(x), y_i)$. 

This implies that $\delta(y^\ast, y_i) = \delta(p(x), y_i)$ for all $i \in \mathbb{N}$.
Because $\{y_i\}$ is dense $Y$, $y^\ast = p(x)$. 
\end{proof}

%The lemma showing the agreement between endpoint traces of the vertical restrictions and the boundary trace, together with inequality (\ref{ineq:endpoint-trace-bound}).

\subsection{Minimizing the Horizontal Total Variation}
\label{sec:horizontal}
In this section, we prove the following theorem.
\begin{theorem}\label{thm:general-bv-statement}
Let $Y$ be a complete, separable metric space and let $p, q \in \BV(\Omega; Y)$ where $\Omega\subset \mathbb{R}^n$ is bounded, connected, and open. 
Then 
\begin{equation}\label{eq:generalfilling}
\min_{\substack{w \in \BV(\Omega\times  I; Y)\\
w|_{\Omega\times0} = p\text{ a.e.}\\
w|_{\Omega\times1} = q\text{ a.e.}}}\int_\Omega |Dw_x|(0,1)\,d\mathscr{L}^n(x) = \int_{\Omega}\delta(p, q)\,d\mathscr{L}^n
\end{equation}
and the minimum is achieved.
\end{theorem}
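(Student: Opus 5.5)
The plan is to prove two inequalities: a lower bound valid for every admissible $w$, and a matching upper bound realised by an explicit competitor. The competitor is the filling $w_0$ from the outline, which switches from $p$ to $q$ at $t=1/2$.

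\emph{Lower bound.} Fix an admissible $w\in\BV(\Omega\times I;Y)$ with $w|_{\Omega\times 0}=p$ and $w|_{\Omega\times 1}=q$ a.e.

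1. By Lemma~\ref{lem:restriction-is-almost-always-bv}, for $\mathscr{L}^n$-a.e.\ $x$ the vertical restriction $w_x$ lies in $\BV(I;Y)$.
2. Lemma~\ref{lem:BV-curve_boundary_trace} then gives endpoint traces $w_x|_0$ and $w_x|_1$, together with $\delta(w_x|_0,w_x|_1)\le |Dw_x|(I)$.
3. Lemma~\ref{lem:boundary-trace-endpoint-trace-agreement} identifies these traces: $w_x|_0=p(x)$ and $w_x|_1=q(x)$ for a.e.\ $x$.
4. Integrating over $\Omega$ yields $\int_\Omega|Dw_x|(I)\,dx\ge\int_\Omega\delta(p,q)\,dx$.

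Two points need care here. First, $x\mapsto |Dw_x|(I)$ must be measurable. I would handle this by writing $|Dw_x|(I)$ as the supremum over the countable dense family $\{y_i\}$ used in the proof of Lemma~\ref{lem:restriction-is-almost-always-bv}, or simply by reading the left-hand side as a lower integral. Second, Lemma~\ref{lem:boundary-trace-endpoint-trace-agreement} was stated under a Lipschitz boundary hypothesis. I would note that its proof is local near $\Omega\times\{0,1\}$, so an interior exhaustion of $\Omega$ suffices. Alternatively, one can observe that the hypothesis is only needed near the lateral boundary, which plays no role.

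\emph{Upper bound and attainment.} Define $w_0(x,t)=p(x)$ for $t\le 1/2$ and $w_0(x,t)=q(x)$ for $t>1/2$.

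1. The traces are immediate. At a.e.\ $x$ that is a Lebesgue point of $p$ (Proposition~\ref{prop:lebesgue-differentiation-thm-for-metric-space-valued-functions}), the averages of $\delta(w_0,p(x))$ over $B((x,0),r)\cap\{t>0\}$ equal averages of $\delta(p,p(x))$ over cylinders comparable to balls, so they tend to $0$. The same argument works at $t=1$ with $q$.
2. Each slice $(w_0)_x$ is a single jump from $p(x)$ to $q(x)$, so $|D(w_0)_x|(I)=\delta(p(x),q(x))$. Hence $w_0$ attains the right-hand side.
3. The main obstacle is showing that $w_0\in\BV(\Omega\times I;Y)$, i.e.\ that $\delta(w_0,y)\in L^1$ and that there is a finite measure dominating $|D(\varphi\circ w_0)|$ for every $\varphi\in\mathrm{Lip}_1(Y)$.

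To resolve that obstacle, I would use that $\varphi\circ w_0=(\varphi\circ p)\chi_{\{t\le1/2\}}+(\varphi\circ q)\chi_{\{t>1/2\}}$. Its distributional derivative consists of three parts:
\begin{itemize}
\item the horizontal derivatives $D(\varphi\circ p)\otimes\mathscr{L}^1\allardell(0,1/2)$ and $D(\varphi\circ q)\otimes\mathscr{L}^1\allardell(1/2,1)$;
\item a jump part on $\Omega\times\{1/2\}$ equal to $(\varphi\circ q-\varphi\circ p)\,\mathscr{H}^n$.
\end{itemize}
This decomposition can be checked directly by testing against $\phi\in C^1_c$ and using Fubini. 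The jump part is dominated by $\delta(p,q)\,\mathscr{H}^n\allardell(\Omega\times\{1/2\})$, which is finite since $\delta(p,q)\le\delta(p,y_0)+\delta(q,y_0)\in L^1$. Therefore
\[
\sigma:=|Dp|\otimes\mathscr{L}^1\allardell(0,\tfrac12)+|Dq|\otimes\mathscr{L}^1\allardell(\tfrac12,1)+\delta(p,q)\,\mathscr{H}^n\allardell(\Omega\times\{\tfrac12\})
\]
is a finite measure dominating all the $|D(\varphi\circ w_0)|$, so $w_0\in\BV(\Omega\times I;Y)$. Combined with the lower bound, this shows the minimum in \eqref{eq:generalfilling} equals $\int_\Omega\delta(p,q)\,d\mathscr{L}^n$ and is achieved by $w_0$.
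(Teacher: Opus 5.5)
Your proposal is correct and follows essentially the same route as the paper. The lower bound is Proposition~\ref{prop:direcitonal-derivative-dominates-L1}: slices are BV curves, their endpoint traces agree with $p$ and $q$, and $\delta(w_x|_0,w_x|_1)\le|Dw_x|(I)$. The upper bound and attainment are Lemma~\ref{lem:step-function}, with the same step function, the same horizontal/jump decomposition of $D(\varphi\circ w_0)$, and the same Lebesgue-point trace argument. Your extra remarks address points the paper passes over silently: the measurability of $x\mapsto|Dw_x|(I)$, and the removal of the Lipschitz-boundary hypothesis in Lemma~\ref{lem:boundary-trace-endpoint-trace-agreement} via a local/interior argument.
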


The quantity $\int_\Omega |Dw_x| (I)$  is also denoted by $|\langle Dw_x, \boldsymbol{e}_{n+1}\rangle|$, in particular, because it agrees with the total variation of the weak derivative of $w$ in the direction $\boldsymbol{e}_{n+1}$ whenever $w$ is a vector-valued $\BV$ function.
In the case of a piecewise-constant filling between $p$ and $q$ (defined in the next section), $|\langle Dw_x, \boldsymbol{e}_{n+1}\rangle|(\Omega\times I)$ will agree with the surface energy of the $L^1$-approximate discontinuity set of $w$. 
Our volume exchange metric will be defined, in part, by minimizing this surface energy and Theorem~\ref{thm:l1_energy_for_metric_BV} will be a corollary of the previous result.

We separately prove that the left hand side of Equation~\ref{eq:generalfilling} is greater than or equal to the right hand side in Proposition~\ref{prop:direcitonal-derivative-dominates-L1} and the reverse inequality in  Lemma~\ref{lem:step-function}. This suffices to show the preceding theorem.

\begin{prop}\label{prop:direcitonal-derivative-dominates-L1}
Let $\Omega$ be a bounded, connected, open subset of $\mathbb{R}^{n}$, and let $(Y, \delta)$ be a complete separable metric space.
If $w \in \BV(\Omega\times  I; Y)$ has boundary traces $p$ and $q$ at $\Omega\times 0$ and $\Omega\times 1$, respectively, then  
\[
\int_{\Omega} \delta(p, q)\,d\mathscr{L}^{n} \leq \int_\Omega |Dw_x|(I)\,d\mathscr{L}^n(x).
\]
\end{prop}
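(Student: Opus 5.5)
The plan is to prove the inequality slice by slice in the vertical direction and then integrate over $\Omega$.

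By Lemma~\ref{lem:restriction-is-almost-always-bv}, there is an $\mathscr{L}^n$-negligible set $N_1\subset\Omega$ such that $w_x\in\BV(I;Y)$ for every $x\in\Omega\setminus N_1$. For each such $x$, Lemma~\ref{lem:BV-curve_boundary_trace} gives endpoint traces $w_x|_0$ and $w_x|_1$ satisfying
\[
\delta\paren{w_x|_0, w_x|_1}\le |Dw_x|(I).
\]

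Next I identify these endpoint traces with $p$ and $q$. Lemma~\ref{lem:boundary-trace-endpoint-trace-agreement} states exactly that the boundary trace of $w$ on $\Omega\times\{0,1\}$ agrees with the endpoint traces of almost every vertical restriction. So there is a negligible set $N_2$ with $w_x|_0=p(x)$ and $w_x|_1=q(x)$ for $x\notin N_2$.

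That lemma was stated under a Lipschitz-boundary hypothesis on $\Omega$. I do not expect this hypothesis to be needed here, because its proof only uses the behavior of $w$ near the faces $\Omega\times 0$ and $\Omega\times 1$, which are flat. If necessary, I would exhaust $\Omega$ by countably many open cubes or balls $U_k\Subset\Omega$ and apply the argument on each $U_k\times I$. Since restricting $w$ to $U_k\times I$ keeps it in $\BV$ and preserves the traces on $U_k\times\{0,1\}$, this yields the identification a.e. on each $U_k$, hence a.e. on $\Omega$.

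Combining the two steps gives $\delta(p(x),q(x))\le |Dw_x|(I)$ for all $x\in\Omega\setminus(N_1\cup N_2)$. Integrating over $\Omega$ then yields the claim.

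Before integrating I must check measurability of both integrands. The function $x\mapsto\delta(p(x),q(x))$ is measurable because $p$ and $q$ are measurable, being a.e. limits of averages of the Borel map $w$. For $x\mapsto|Dw_x|(I)$, I would use the representation from the proof of Lemma~\ref{lem:restriction-is-almost-always-bv}, namely $|Dw_x|=\bigvee_i|D(f_i)_x|$. This is a supremum over finite subfamilies of $i$, and over countable partitions of $I$ into intervals, of quantities measurable in $x$ by the scalar slicing theory, so it is measurable.

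Alternatively, I would just interpret the right-hand side as an upper integral; the integral bound $\int_\Omega(\bigvee_i|D(f_i)_x|)(I)\,dx\le\mu(\Omega\times I)$ from that proof guarantees the right side is finite or the inequality is trivial.

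I expect the main obstacle to be the trace-identification step, in particular justifying Lemma~\ref{lem:boundary-trace-endpoint-trace-agreement} without the Lipschitz boundary assumption. The localization to interior cubes should handle this.
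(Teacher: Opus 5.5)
Your proposal is correct and matches the paper's proof: for a.e.\ $x$ you combine Lemma~\ref{lem:restriction-is-almost-always-bv}, the endpoint bound (\ref{ineq:endpoint-trace-bound}), and Lemma~\ref{lem:boundary-trace-endpoint-trace-agreement} to get $\delta(p(x),q(x))\le |Dw_x|(I)$, then integrate over $\Omega$. Your localization to cubes $U_k\Subset\Omega$ handles the Lipschitz-boundary hypothesis of Lemma~\ref{lem:boundary-trace-endpoint-trace-agreement}, which the paper invokes without comment, and your measurability checks are valid refinements the paper leaves implicit; for $p$, the precise reason is that $\delta(p(x),y_i)$ is a limit of ball averages of $\delta(w,y_i)$ for each $y_i$ in a countable dense set, since a metric-valued $w$ has no averages of its own.
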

\begin{proof}
By Lemma~\ref{lem:restriction-is-almost-always-bv} $w_x \in \BV(\Omega; Y)$ for $\mathscr{L}^{n}$-a.e.~$x \in \Omega$.
Thus, by Lemma~\ref{lem:boundary-trace-endpoint-trace-agreement} and the upper bound (\ref{ineq:endpoint-trace-bound}) 
\[
\int_{\Omega} \delta(p, q)\,d\mathscr{L}^{n} = \int_{\Omega} \delta(w_x|_{0}, w_x|_{1})\,d\mathscr{L}^n(x) \leq \int_\Omega |Dw_x|(I)\,d\mathscr{L}^n(x). 
\]
\end{proof}

We now show that directional weak derivative of the step-function $w$ (defined below in Equation (\ref{def:step-function})) that jumps from $p$ to $q$ at some point between $0$ and $1$ is exactly the $L^1$-norm of $\delta(p, q)$. 

\begin{lemma}\label{lem:step-function}
Let $(Y, \delta)$ be a complete separable metric space, let $\Omega$ be a bounded, connected, open subset of $\mathbb{R}^n$, and let $p, q \in \BV(\Omega; Y)$.
The step-function $w^\ast\colon\Omega\times I \to Y$ defined by the equation
\begin{equation}\label{def:step-function}
w^\ast(x, t) \coloneq \begin{cases}
p(x) & \text{if } \quad (x, t) \in \Omega\times (0, 1/2]\\
q(x) & \text{if } \quad (x, t) \in \Omega\times (1/2, 1),
\end{cases}
\end{equation}
which jumps from $p$ to $q$ at $t = 1/2$, has the following properties.
\begin{enumerate}
\item[(a)] $w^\ast \in \BV(\Omega\times  I; Y)$;
\item[(b)] the boundary trace of $w^\ast$ exists and is given by 
\[
w^\ast|_{\Omega\times 0}(x, 0) = p(x) \quad \text{ and } \quad  w^\ast|_{\Omega\times 1}(x,1) = q(x) \quad \text{ for } \mathscr{L}^n\text{-a.e.}~x \in \Omega;
\]
\item[(c)] $\int_\Omega |Dw_x|(I)\,d\mathscr{L}^n(x) = \|\delta(p, q)\|_{L^1(\Omega)}$.
\end{enumerate}
\end{lemma}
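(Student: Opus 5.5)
We will verify the three properties directly from the definitions, using the fact that for each fixed $x$ the vertical restriction $w^\ast_x$ is a two-valued step curve. For (c), fix $x\in\Omega$. The curve $w^\ast_x$ equals $p(x)$ on $(0,1/2]$ and $q(x)$ on $(1/2,1)$. Any partition $t_1\le\dots\le t_{l+1}$ crosses $1/2$ at most once, so $\variation\paren{w^\ast_x;I}=\delta(p(x),q(x))$. Any representative $\hat\gamma$ agreeing a.e.\ with $w^\ast_x$ must take the value $p(x)$ at some point of $(0,1/2)$ and $q(x)$ at some point of $(1/2,1)$, so its variation is at least $\delta(p(x),q(x))$. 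Hence $|Dw^\ast_x|(I)=\essvariation\paren{w^\ast_x;I}=\delta(p(x),q(x))$, and integrating over $\Omega$ gives (c). For this to make sense we need $x\mapsto\delta(p(x),q(x))$ to be integrable. This follows from $\delta(p,q)\le\delta(p,y_0)+\delta(y_0,q)$, once we note that $p,q\in L^1(\Omega;Y)$ with respect to a common base point; the triangle inequality and finite $\mathscr{L}^n(\Omega)$ let us change base points.

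For (b), we use the Lebesgue point structure. By Proposition~\ref{prop:lebesgue-differentiation-thm-for-metric-space-valued-functions}, for a.e.\ $x_0\in\Omega$ we have $\dashint_{\Omega\cap B(x_0,r)}\delta(p,p(x_0))\to0$. For $(x_0,0)$ and small $r$, the set $(\Omega\times I)\cap B((x_0,0),r)$ lies inside $B(x_0,r)\times(0,r)$, where $w^\ast=p$. Its measure is comparable to $r^{n+1}$, up to constants depending only on $n$ (this holds when $B(x_0,r)\subset\Omega$, which is true for small $r$ since $x_0$ is interior). Therefore
\[
\dashint_{(\Omega\times I)\cap B((x_0,0),r)}\delta(w^\ast,p(x_0))\le C\,\dashint_{B(x_0,r)}\delta(p,p(x_0))\,d\mathscr{L}^n\to0,
\]
which gives the trace $p(x_0)$. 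The same argument at $t=1$ gives the trace $q$.

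For (a), which we expect to be the main obstacle, we must produce a finite measure $\sigma$ dominating $|D(\varphi\circ w^\ast)|$ uniformly over all $\varphi\in\mathrm{Lip}_1(Y;\mathbb{R})$. Integrability of $\delta(w^\ast,y_0)$ is immediate from that of $\delta(p,y_0)$ and $\delta(q,y_0)$. We propose
\[
\sigma=|Dp|\otimes\mathscr{L}^1\allardell(0,1/2)+|Dq|\otimes\mathscr{L}^1\allardell(1/2,1)+\delta(p,q)\,\mathscr{L}^n\otimes\delta_{1/2}.
\]
This measure is finite by the previous paragraph and because $|Dp|(\Omega),|Dq|(\Omega)<\infty$. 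For fixed $\varphi$, the scalar function $u=\varphi\circ w^\ast$ is the step function built from the scalar BV functions $\varphi\circ p$ and $\varphi\circ q$. We compute $Du$ distributionally. On $\Omega\times(0,1/2)$, $Du=D(\varphi\circ p)\otimes\mathscr{L}^1$, which is dominated by $|Dp|\otimes\mathscr{L}^1$. The analogous statement holds on $(1/2,1)$. Across the hyperplane $t=1/2$ there is a jump part $(\varphi\circ q-\varphi\circ p)\,\mathscr{L}^n\otimes\delta_{1/2}\,\boldsymbol e_{n+1}$, whose modulus is at most $\delta(p,q)$ because $\varphi$ is $1$-Lipschitz.

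To justify the jump computation, we integrate $u\,\partial_j\phi$ against a test function $\phi$ and use Fubini in $t$ for $j=n+1$. We use Fubini with the BV integration by parts in $x$ on each slab for $j\le n$. The pieces then combine into the formula above with total variation bounded by $\sigma$ on every Borel set, which proves $w^\ast\in\BV(\Omega\times I;Y)$.
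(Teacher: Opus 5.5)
Your proposal is correct and follows essentially the same route as the paper. For (a) you use the same dominating measure, built from the two slab pieces plus the jump term $\delta(p,q)\,\mathscr{H}^n\allardell(\Omega\times\{1/2\})$ and obtained via Fubini and slicewise integration by parts. For (b) you use the same comparison of the half-ball with the cylinder $B(x_0,r)\times(0,r)$ at a Lebesgue point of $p$. For (c) you compute the variation of the step curve slice by slice, and your version is slightly more explicit than the paper's, since you justify why no a.e.-representative can have smaller variation.
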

\begin{proof}
(\emph{a}) We will show $w$ is in $\BV(\Omega\times I; Y)$ by definition.
First notice that the triangle inequality implies $w^\ast \in L^1(\Omega\times I; Y)$.
Let $\varphi\colon Y\to \mathbb{R}$ be an arbitrary 1-Lipschitz function, and let us denote the post-compositions of $w^\ast$, $p$ and $q$ with $\varphi$ by $v$, $v_0$ and $v_1$, respectively.

Let us now compute the weak derivatives of $v$. 
Let $\phi \in C_c^\infty(\Omega\times I; \mathbb{R})$ and first fix $j \in \{1, \dots, n\}$.
Because the map $x\mapsto \phi(x, t)$ is in $C_c^\infty(\Omega; \mathbb{R})$ for any $t \in  I$, we get 
\begin{equation}
    \int_\Omega v_i(x)\partial_j\phi(x, t)\,d\mathscr{L}^{n}(x) = -\int_\Omega\phi(x, t)\,dD_jv_i(x) \quad \text{ for } i = 0, 1 \text{ and } t \in  I.
\end{equation}
Together with Fubini's theorem and the definition of $w$, this implies  
\[
    \int_{\Omega\times(0,1)} v\,\partial_j \phi\,d\mathscr{L}^{n+1} = -\int_{(0, 1/2)}\left(\int_\Omega\phi\,D_jv_0\right)d\mathscr{L} - \int_{(1/2, 1)}\left(\int_\Omega\phi\,D_jv_1\right)d\mathscr{L},
\]
and thus
\begin{equation}\label{eq:weak-derivative-to-n}
    D_jv = D_jv_0\,\mathscr{L}\allardell (0, 1/2) + D_jv_1\,\mathscr{L}\allardell(1/2, 1).
\end{equation}
For $j = n+1$, because the map $t\mapsto \phi(x, t)$ is smooth and compactly supported on $ I$ (for any $x \in \Omega$), the definition of $v$ implies $\int_{(0,1)} v(x, t)\,\partial_{n+1}\phi(x, t)\,dt = (v_0(x) - v_1(x))\,\phi(x, 1/2)$.
and thus 
\begin{equation}
    D_{n+1}v = (v_1 - v_0)\mathscr{H}^n\allardell \Omega\times \{1/2\}.
\end{equation}

Now by definition, there exists finite measures $\sigma_0$ and $\sigma_1$ on $\Omega$ such that $|D(\varphi\circ p)| \leq \sigma_0$ and $|D(\varphi\circ q)| \leq \sigma_0$ for any 1-Lipschitz function $\varphi\colon Y\to \mathbb{R}$. 
Thus,
\[
|Dv| \leq \sigma_0\otimes \mathscr{L}^1\allardell (0, 1/2) + \sigma_1 \otimes\mathscr{L}^1\allardell (1/2, 1) + \delta(p, q)\,\mathscr{H}^n\allardell (\Omega \times \{1/2\}).
\]
Since the measure bounding $|Dv|$ has total a total mass of $\frac{1}{2}\sigma_0(\Omega) + \frac{1}{2}\sigma_1(\Omega) + \int_\Omega \delta(p, q) < +\infty$, and since $\varphi$ was an arbitrary, we have shown $w$ is in $\BV(\Omega; Y)$.

(\emph{b}) 
We now show that the boundary trace of $w^\ast$ exists at the bottom $\Omega\times 0$ of the cylinder $\Omega\times  I$; a similar argument can be made for the top $\Omega\times 1$ of the cylinder. 
By Proposition \ref{prop:lebesgue-differentiation-thm-for-metric-space-valued-functions} the $L^1$-approximate limit of $p$ exists and is equal to $p(x_0)$ for $\mathscr{L}^n$-a.e.~$x_0 \in \Omega$. 
Therefore, for any such $x_0 \in \Omega$, since $B((x_0, t), r) \subset B(x_0, r)\times [0, r)$ and 
\[
\int_{B(x_0, r)\times [0, r)} \delta(w, p(x_0))\,d\mathscr{L}^{n+1} = r\int_{B(x_0, r)}\delta(p, p(x_0))\,d\mathscr{L}^n ,
\]
we get that for small enough $r > 0$, 
\begin{align*}
\dashint_{\Omega\times(0,1)\cap B((x_0, 0), r)}\delta(w^\ast, p(x_0))\,d\mathscr{L}^{n+1} &\leq \frac{r}{\frac{1}{2}\omega_{n+1}r^{n+1}}\int_{B(x_0, r)}\delta(p, p(x_0))\,d\mathscr{L}^{n}\\
&= \frac{2\omega_{n}}{\omega_{n+1}}\dashint_{B(x_0, r)}\delta(p, p(x_0))\,d\mathscr{L}^{n} \longrightarrow 0
\end{align*}
as $r\to 0+$. 
Hence by the definition, the boundary trace of $w^\ast$ exists for $\mathscr{L}^{n}$-a.e.~$x_0\in \Omega$ and is equal to $p(x_0)$. 

(\emph{c}) 
We now prove the final part of the Lemma.
For each $x \in \Omega$, the vertical slice $w^\ast_x$ is a step function which jumps from $p(x)$ to $q(x)$ at $t = 1/2$. 
Because $w^\ast_x$ minimizes the variation over any representative of $w^\ast_x$, and because $\essvariation\paren{w^\ast_x, (0,1)} = |Dw^\ast_x|(I)$, we get $\delta\paren{p, q} = \variation\paren{w^\ast_x; (0,1)} = |Dw^\ast_x| (I)$. 
Hence
\[
\int_\Omega \delta(p(x), q(x))\,d\mathscr{L}^{n} = \int_\Omega |Dw^\ast_x|(I)\,d\mathscr{L}^{n}.
\]
\end{proof}

\subsection{Rectifiability of approximate discontinuity set and existence of boundary traces}
\label{subsec:energywelldefined}
\begin{theorem}\label{thm:approximate-discontinuity-rectifiability}
Suppose $(Y, \delta)$ is a complete and separable metric space, and suppose $w \in \BV(\Omega; Y)$. 
Then $S_w$ is countably $\mathscr{H}^{n-1}$-rectifiable, and there exist Borel maps 
\[
\unitnormal\colon S_w\to \mathbb{S}^{n-1}, \quad w^+, w^-\colon S_w \to Y
\]
such that $\unitnormal$ orients $S_w$ and, for $\mathscr{H}^{n-1}$-almost every $x_0 \in S_w$, 
\[
\lim_{r\to 0}\dashint_{B_{\unitnormal}^\pm(x_0, r)}\delta\paren{w, w^\pm(x_0)}\,d\mathscr{L}^n = 0 \quad \text{ and } w^+(x_0) \neq w^-(x_0).
\]
\end{theorem}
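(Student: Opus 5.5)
The plan is to reduce to the scalar theory via a countable dense set, exactly as in the proofs of Proposition~\ref{prop:lebesgue-differentiation-thm-for-metric-space-valued-functions} and Lemma~\ref{lem:restriction-is-almost-always-bv}. Fix a countable dense subset $\{y_i\}$ of $Y$ and set $f_i \coloneq \delta(w, y_i)$. Each $f_i$ lies in $\BV(\Omega)$ with $|Df_i| \leq |Dw|$, since $y\mapsto\delta(y,y_i)$ is $1$-Lipschitz. By the Federer--Vol'pert theorem for real-valued BV functions \cite[Theorem 3.78]{ambrosio2000functions}, each approximate discontinuity set $S_{f_i}$ is countably $\mathscr{H}^{n-1}$-rectifiable. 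Moreover, $\mathscr{H}^{n-1}(S_{f_i}\setminus J_{f_i})=0$, and on $J_{f_i}$ there are one-sided traces $f_i^\pm$ and a normal $\unitnormal_{f_i}$.

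First I will show $S_w \subset \bigcup_i S_{f_i}$. Suppose $x_0\notin\bigcup_i S_{f_i}$, and let $a_i$ be the approximate limit of $f_i$ at $x_0$. The map $y\mapsto \delta(y,y_i)$ is $1$-Lipschitz and $\{y_i\}$ is dense, so $\delta(y,y')=\sup_i|\delta(y,y_i)-\delta(y',y_i)|$. The plan is to construct a candidate limit $y_0$ as the limit of a Cauchy sequence, using completeness of $Y$. For each $k$, pick $i_k$ with $a_{i_k}<1/k$ plus the infimum of $(a_i)$. A priori we do not know $\inf_i a_i = 0$. To get this, compare the averages: $a_i + a_j \geq \limsup \dashint \delta(y_i,y_j)$, so $a_i+a_j\ge\delta(y_i,y_j)$. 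Also $\dashint f_i \ge \dashint\,\inf_j f_j=0$ pointwise does not pass to the limit directly. Here I would use a Chebyshev argument: if all $a_i\ge\eta>0$, then since $\bigcup_i\{f_i<\eta/2\}=\Omega$, some finite union covers a set of density bounded below, which leads to a contradiction. This is the delicate step. The cleaner route is via Ambrosio's proof in \cite{ambrosio1990metric}, and I would follow that route: its only use of properness is to extract a limit of $y_{i_k}$, and completeness replaces properness once I show $\delta(y_{i_k},y_{i_l})\le a_{i_k}+a_{i_l}\to0$. With $y_0=\lim y_{i_k}$, we get $\dashint\delta(w,y_0)\le a_{i_k}+\delta(y_{i_k},y_0)+o(1)\to0$. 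Hence $S_w\subset\bigcup_i S_{f_i}$ up to checking that $\inf_i a_i=0$, and $S_w$ is countably rectifiable.

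Next I will construct the traces. On $J\coloneq\bigcup_i J_{f_i}$, I will choose the normal $\unitnormal(x_0)$ to be $\unitnormal_{f_i}(x_0)$ for the least $i$ with $x_0\in J_{f_i}$. Normals of rectifiable sets agree $\mathscr{H}^{n-1}$-a.e. up to sign where the sets overlap, since they coincide with the approximate tangent plane of $S_w$. So after flipping orientations, for $\mathscr{H}^{n-1}$-a.e. $x_0\in S_w$, every $f_i$ has one-sided traces $f_i^\pm(x_0)$ with respect to the common $\unitnormal$. Here $f_i^+=f_i^-$ if $x_0\notin J_{f_i}$. Repeating the Cauchy-sequence argument above on the half balls $B^\pm_{\unitnormal}(x_0,r)$ instead of full balls produces $w^\pm(x_0)\in Y$ with vanishing averages $\dashint_{B^\pm}\delta(w,w^\pm(x_0))$. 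Borel measurability follows because $w^\pm(x_0)$ is characterized by $\delta(w^\pm(x_0),y_i)=f_i^\pm(x_0)$ for all $i$, and these functions are Borel. Finally, $w^+(x_0)\neq w^-(x_0)$ for such $x_0\in S_w$. Otherwise the two half-ball limits coincide and $x_0$ would be an approximate continuity point, contradicting $x_0\in S_w$.

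The main obstacle is the existence step: producing a point $y_0$, or $w^\pm(x_0)$, from the scalar data without properness. My first attempt is the Cauchy-sequence argument driven by the inequality $\delta(y_i,y_j)\le a_i+a_j$. The point needing care is showing $\inf_i a_i=0$, i.e.\ that $w$ concentrates near some $y_i$. For this I would use the truncations $\min(f_i,\eta)$, which are also BV, together with the fact that $\inf_i f_i=0$ pointwise. If that fails, I would fall back on proving that the family of push-forward measures of normalized Lebesgue measure on the shrinking balls under $w$ is tight, using BV control of the oscillation.
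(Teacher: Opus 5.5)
Your outline matches the paper's: a countable dense set, $f_i=\delta(w,y_i)$, a Cauchy sequence built from $\delta(y_i,y_j)\le\tau_i+\tau_j$ and completeness, traces along a common normal, Borel measurability via $\delta(w^\pm,y_i)=f_i^\pm$, and distinctness of the traces. However, the one step that needs a genuinely new idea is left unproved: $\inf_i a_i=0$, together with its half-ball analogue $\inf_i f_i^\pm(x_0)=0$. Worse, the pointwise form in which you state it is false.

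Take $n\ge2$, $\Omega=B(0,1)$, $Y=\N$ with the discrete metric, and $w=k$ on $A_k=\{2^{-k-1}<|x|<2^{-k}\}$. Since $\sum_k P(A_k;\Omega)<\infty$, we have $w\in\BV(\Omega;Y)$. Here $f_k=1-\chi_{A_k}$, and every $A_k$ has density $0$ at the origin. So each $f_k$ is approximately continuous there with $a_k=1$, i.e.\ $0\notin\bigcup_i S_{f_i}$. Yet $\dashint_{B(0,r)}\delta(w,k)\,d\mathscr{L}^n\to1$ for every $k$, so $0\in S_w$. Hence $S_w\not\subset\bigcup_i S_{f_i}$, and no argument carried out at a single point can give $\inf_i a_i=0$:
\begin{itemize}
\item the Chebyshev idea fails, because the sets $\{f_i<1/2\}=A_i$ cover $\Omega$ while no finite union of them has positive density at $0$;
\item the tightness fallback fails, because the push-forwards under $w$ of normalized Lebesgue measure on $B(0,r)$ escape to infinity in $\N$.
\end{itemize}
Your appeal to Ambrosio's argument is circular at this point, since the estimate $\delta(y_{i_k},y_{i_l})\le a_{i_k}+a_{i_l}\to0$ presupposes $\inf_i a_i=0$.

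What is missing is a way to enlarge the exceptional set by a further rectifiable set on which this step may fail. The paper does this with $v_N\coloneq\min_{i\le N}f_i$.
\begin{itemize}
\item Because $y\mapsto\min_{i\le N}\delta(y,y_i)$ is $1$-Lipschitz, $|Dv_N|\le|Dw|$ uniformly in $N$.
\item By density of $\{y_i\}$ and dominated convergence, $v_N\downarrow0$ in $L^1$.
\item Consequently the densities $g_N$ of $Dv_N$ with respect to $|Dw|$ converge weakly to $0$ in $L^1(|Dw|)$.
\item Mazur's lemma then yields convex combinations $u_m$ of $\{v_i\}_{i\ge m}$ with $\|u_m\|_{\BV(\Omega)}\to0$.
\item Summing a subsequence with summable norms gives $u\in\BV(\Omega)$ with $Mv_N\le u$ for all $N\ge N_M$, using that $v_N$ is decreasing.
\end{itemize}
One then sets $\Gamma=S_u\cup\bigcup_i S_{f_i}$, which is still countably rectifiable by the scalar theory. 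At every point where $u$ and all $f_i$ have approximate limits, or one-sided traces $\tau,\tau_i$ along a common normal, averaging $Mv_N\le u$ gives $\min_{i\le N}\tau_i\le\tau/M$ for every $M$. Hence $\inf_i\tau_i=0$, and from there your Cauchy-sequence, measurability and distinctness arguments go through as written. In the example above, $u\ge M$ near the origin for every $M$, so $0\in S_u$. This is exactly why $S_u$ has to be added to the exceptional set.
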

\begin{proof}
Let $\{y_i\}$ be a countable dense subset of $Y$, and suppose $y_1$ is chosen such that $\delta\paren{w, y_1} \in L^1(\Omega)$ -- which we may assume exists by the definition of the class $\BV(\Omega; Y)$. 
In addition, define
\begin{equation}\label{def:min-distance}
f_i(x)\coloneq \delta\paren{w(x), y_i} \quad \text{ and } \quad v_N\paren{x} \coloneq \min_{i \leq N} f_i(x)
\end{equation}
for each $N \in \mathbb{N}$.

(\textit{Step 1}) 
The distance function $y\mapsto \min_{i \leq N}\delta\paren{y, y_i}$ is 1-Lipschitz.
Hence, by the definition of the class $\BV(\Omega; Y)$, $v_N \in \BV(\Omega)$ for any $N$, and there exists a finite Radon measure $\mu$ on $\Omega$ such that $|D v_N|\leq \mu$ for all $N$. 

Moreover, because $0 \leq v_{N+1} \leq v_N \leq \delta\paren{w, y_1}$ and $v_N \to 0$ as $N \to \infty$, the dominated convergence theorem gives us that 
\[
\lim_{N\to 0}\|v_N\|_{L^1(\Omega; \mathbb{R})} = 0.
\]
Now, by the Radon-Nikodym theorem, there exists an $L^1$-summable function $g_N\colon \Omega\to \mathbb{R}^n$ such that $Dv_N = g_N\mu$. 
Notice that we also have $|g_N(x)| \leq 1$ for $\mu$-a.e.~$x \in \Omega$. 
Together with the definition of the weak-derivative of $v_N$, this implies that for any $\phi \in C_c^1(\Omega; \mathbb{R}^n)$ we have 
\begin{equation}\label{eq:convergenceweak}
\int_\Omega \phi\cdot g_N\,d\mu = -\int_\Omega v_N\,\mathrm{div}\,\phi\,d\mathscr{L}^n \longrightarrow 0 \text{ as } N\to \infty.
\end{equation}
Now, because $\mu$ is a finite Radon measure on $\Omega$, $C_c^1(\Omega; \mathbb{R}^n)$ is dense in $L^1(\mu; \mathbb{R}^n)$ and $L^\infty(\mu; \mathbb{R}^n)\subset L^1(\mu; \mathbb{R}^n)$, for any $\varepsilon > 0$, and $\psi \in L^\infty(\mu; \mathbb{R}^n)$, we can pick $\phi \in C_c^1(\mu; \mathbb{R}^n)$ such that $$\|\psi - \phi\|_{L^1(\mu; \mathbb{R}^n)} < \varepsilon.$$ Writing $\int_\Omega \psi\cdot g_N\,d\mu = \int_\Omega \phi\cdot g_N\,d\mu + \int_\Omega (\psi - \phi)\cdot g_N\,d\mu$, the uniform boundedness of $g_N$, and (\ref{eq:convergenceweak}), this implies $g_N$ converges \textit{weakly} to $0$ in $L^1(\Omega)$.
Strong convergence of $v_N$ to $0$ in $L^1(\Omega; \mathbb{R})$ together with weak convergence of $g_N$ to $0$ in $L^1(\mu; \mathbb{R}^n)$ implies the ordered pair $(v_N, g_N)$ converges weakly to $(0, 0)$ in the Banach space $L^1(\Omega) \times L^1(\mu; \mathbb{R}^n)$.

By Mazur's lemma, for $m \in \mathbb{N}$, there exist finite convex combinations 
\[
(u_m, h_m) = \sum_{i = m}^{K_m}\lambda_i^m (v_i, g_i)  \quad \text{ where } \quad \|u_m\|_{L^1(\Omega)} + \|h_m\|_{L^1(\mu; \mathbb{R}^n)} \to 0.
\]
Since $D u_m = h_m \mu$, we get that 
\begin{equation}\label{def:bv-scalar-bound}
\|u_m\|_{\BV(\Omega)} \to 0.
\end{equation}
This provides us with a subsequence $\{u_{m_k}\}$ such that $\sum_{k = 1}^\infty \|u_{m_k}\|_{\BV(\Omega)} < +\infty$, which we note, implies 
\[
u \coloneq \sum_{k = 1}^\infty u_{m_k} \in \BV(\Omega).
\]

Let $M \in \mathbb{N}$ and $N_M \coloneq \max\{K_{m_1}, \dots, K_{m_M}\}$.
For any $N \geq N_M$ we have 
\begin{equation}\label{ineq:bv-domination}
M\,v_N \leq \sum_{k = 1}^M v_{K_{m_k}} \leq \sum_{k = 1}^M \sum_{i = m_k}^{K_{m_k}} \lambda_i^{m_k} v_i = \sum_{k = 1}^M u_{m_k} \leq u.
\end{equation}

(\textit{Step 2})
Define 
\[
\Gamma \coloneq S_u \cup \bigcup_{i = 1}^\infty S_{f_i}.
\]
Since $u\in \BV(\Omega)$ and $f_i\in \BV_{\mathrm{loc}}(\Omega)$, the scalar $\BV$ rectifiability theorem~\cite[Theorem 3.78]{ambrosio2000functions} implies that $\Gamma$ is countably $\mathscr H^{n-1}$-rectifiable.
Let $\unitnormal$ be a Borel map orienting $\Gamma$. 
Applying the interior trace theorem to $u$ and each $f_i$ gives us an $\mathscr{H}^{n-1}$-negligible set $\mathcal{N}\subset \Gamma$ such that the one sided traces $u^\pm_{\unitnormal}(x)$ of $u$ and $(f_i)^\pm_{\unitnormal}(x)$ (for any $i$) exist at every $x \in \Gamma\setminus \mathcal{N}$, and are Borel.

Let $x_0 \in \Omega\setminus \mathcal{N}$.  
For the following argument, we set $B_r = B(x_0, r)$ in the case $x_0 \notin \Gamma$, and set $B_r$ to be either half-ball $B_{\unitnormal(x_0)}^\pm(x_0, r)$ if $x_0\in \Gamma$.
Additionally, we denote the trace (corresponding to $B_r$) of $u$ by $\tau$, and the trace of any $f_i$ by $\tau_i$. 
Now, because 
\[
\Big|v_N(x) - \min_{i \leq N}\tau_i\Big| \leq \sum_{i = 1}^N|f_i(x) - \tau_i| \quad \text{ for any } x \in \Omega
\]
we get 
\[
\lim_{r\to 0+}\dashint_{B_r}|v_N - \min_{i \leq N}\tau_i|\,d\mathscr{L}^n = 0.
\]
Thus taking the limit of the average value of both sides of the inequality (\ref{ineq:bv-domination}) over $B_r$ gives us 
\[
0 \leq \min_{i \leq N} \tau_i \leq \frac{\tau}{M} \quad \text{ for } N \geq N_M \text{ and } M \in \mathbb{N},
\]
and hence $\inf_i \tau_i = 0$.
This provides us with a sequence $\{i_j\}$ such that $\tau_{i_j} < 2^{-j}$ for each $j \in \mathbb{N}$, and because 
\[
\delta(y_i, y_j) \leq \tau_i + \tau_j \quad \text{ for any } i,j \in \mathbb{N},
\]
we get that the subsequence $\{y_{i_j}\}$ is Cauchy.
In particular, since $Y$ is complete, this implies the existence of a point $y_0 \in Y$ such that $y_{i_j}\to y_0$.

Thus
\[
\lim_{r\to0+} \dashint_{B_r} \delta\paren{w, y_0}\,d\mathscr{L}^n\; \leq\; \tau_{i_j} + \delta\paren{y_{i_j}, y_0}\; \longrightarrow\; 0 \qquad \text{ as } j \to \infty. 
\]

Therefore if $x_0 \notin \Gamma$, i.e., the approximate limit of $u$ and of all the $f_i$ exist at $x_0$, then the approximate limit of $w$ exists at $x_0$, and in particular, $S_w$ is contained in the rectifiable set $\Gamma$.
Therefore, $S_w$ is $\mathscr{H}^{n-1}$ rectifiable.
Moreover, we have shown that the one-sided traces $w^+(x_0)$ and $w^-(x_0)$ exist at any $x_0 \in \Gamma\setminus \mathcal{N}$. 
Enlarging $\mathcal{N}$ if necessary, assume it is Borel and set $G:=S_w\setminus\mathcal{N}$.
By the triangle inequality and uniqueness of scalar traces, 
\[
\delta\paren{w^\pm(x), y_i} = (f_i)_{\unitnormal}^\pm(x) \quad \text{ for } x \in S_w\setminus\mathcal{N},\; i \in \mathbb{N}.
\]
Since the metric balls around the countable dense set $\{y_i\}$ generate the Borel $\sigma$-algebra of $Y$, the Borel measurability of the scalar traces implies $w^\pm$ are Borel on $S_w\setminus \mathcal{N}$. 
Extend both maps by $y_1$ on $S_w \cap \mathcal{N}$ and restrict $\unitnormal$ to $S_w$. 

Finally, the traces are distinct on $S_w\setminus \mathcal{N}$. 
Indeed, if they coincided, then combining the two half-ball limits would give a full-ball average limit, contradicting membership in $S_w$. 
\end{proof}

The following theorem shows that metric space valued $\BV$-functions on bounded open sets have boundary traces defined almost everywhere if the boundary is Lipschitz. 
This theorem motivates our notion of a filling (defined in the next section) between two metric space valued $\BV$-functions.

Recall that a subset $U\subset \mathbb{R}^n$ has \textit{\textbf{Lipschitz boundary} if for every point $x \in \partial U$ there exists an open ball $B_r(x)$ and a hyperplane $P$ through $x$, such that $B_r(x)\cap \partial U$ is the graph of some Lipschitz function over $P$}.

\begin{theorem}[Existence of the boundary trace]\label{thm:boundary-traces}
Let $\Omega\subset \mathbb{R}^{n}$ be a bounded, open, connected set with Lipschitz boundary $\partial \Omega$ and let $(Y, \delta)$ be a complete separable metric space. 
If $w \in \BV(\Omega; Y)$ then for $\mathscr{H}^{n-1}$-a.e.~$x_0 \in \partial \Omega$ there exists a $w|_{\partial \Omega}(x_0) \in Y$ such that 
\[
\lim_{r\to 0+} \dashint_{\Omega\cap B(x_0, r)} \delta(w, w|_{\partial \Omega}(x_0))\,d\mathscr{L}^{n} = 0.
\]
\end{theorem}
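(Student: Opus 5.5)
The plan is to reproduce the strategy of Theorem~\ref{thm:approximate-discontinuity-rectifiability}, replacing the interior one-sided trace theorem for scalar $\BV$ functions with the scalar boundary trace theorem on Lipschitz domains \cite[Theorem 3.87]{ambrosio2000functions}. Fix a countable dense set $\{y_i\}\subset Y$ with $\delta(w,y_1)\in L^1(\Omega)$. Define $f_i=\delta(w,y_i)$ and $v_N=\min_{i\le N}f_i$ exactly as in (\ref{def:min-distance}). Each $f_i$ and each $v_N$ is the composition of $w$ with a $1$-Lipschitz function, so each lies in $\BV(\Omega)$ with total variation dominated by the single measure $|Dw|$. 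Step~1 of the proof of Theorem~\ref{thm:approximate-discontinuity-rectifiability} (dominated convergence, Mazur's lemma, then summing a fast subsequence) uses nothing about interior points. It therefore yields verbatim a function $u\in\BV(\Omega)$ with $M v_N\le u$ for all $N\ge N_M$, as in (\ref{ineq:bv-domination}).

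Next, since $\partial\Omega$ is Lipschitz and bounded, every scalar $g\in\BV(\Omega)$ has a trace $g|_{\partial\Omega}\in L^1(\partial\Omega,\mathscr{H}^{n-1})$. This trace satisfies $\lim_{r\to0}\dashint_{\Omega\cap B(x_0,r)}|g-g|_{\partial\Omega}(x_0)|\,d\mathscr{L}^n=0$ for $\mathscr{H}^{n-1}$-a.e.\ $x_0\in\partial\Omega$. I will quote this as the scalar input; one should check that AFP's formulation with $\Omega\cap B$ in the average matches, which follows from the density bound $\mathscr{L}^n(\Omega\cap B(x_0,r))\ge c r^n$ valid on Lipschitz domains. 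Apply it to $u$ and to the countably many $f_i$, and discard an $\mathscr{H}^{n-1}$-null set $\mathcal N\subset\partial\Omega$. At each remaining $x_0$, write $\tau=u|_{\partial\Omega}(x_0)$ and $\tau_i=f_i|_{\partial\Omega}(x_0)$.

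Then I repeat Step~2 of the earlier proof with $B_r=\Omega\cap B(x_0,r)$. First, $|v_N-\min_{i\le N}\tau_i|\le\sum_{i\le N}|f_i-\tau_i|$ gives the trace of $v_N$ as $\min_{i\le N}\tau_i$. Averaging (\ref{ineq:bv-domination}) over $B_r$ then gives $\min_{i\le N}\tau_i\le\tau/M$, so $\inf_i\tau_i=0$. Choose $\tau_{i_j}<2^{-j}$. The inequality $\delta(y_i,y_k)\le\dashint_{B_r}(f_i+f_k)\to\tau_i+\tau_k$ shows $\{y_{i_j}\}$ is Cauchy, so it converges to some $y_0$ by completeness. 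Finally, $\limsup_r\dashint_{B_r}\delta(w,y_0)\le\tau_{i_j}+\delta(y_{i_j},y_0)\to0$, and we set $w|_{\partial\Omega}(x_0)=y_0$.

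The main obstacle is purely the scalar input, namely that a single $\mathscr{H}^{n-1}$-null exceptional set works for the countably many $f_i$ and for $u$ simultaneously; countable additivity handles this. The uniform domination is not needed at the boundary, since $u$ was already constructed in the interior. Uniqueness of $y_0$ follows from the triangle inequality.
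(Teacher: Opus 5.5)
Your proposal is correct and follows the paper's proof essentially step for step. You reuse Step~1 of Theorem~\ref{thm:approximate-discontinuity-rectifiability} to build the dominating $u\in\BV(\Omega)$, apply the scalar boundary trace theorem \cite[Theorem 3.87]{ambrosio2000functions} to $u$ and the countably many $f_i$ off a common $\mathscr{H}^{n-1}$-null set, and run the Step~2 reconstruction with $B_r=\Omega\cap B(x_0,r)$. Your remark that the $r^{-n}$-normalized scalar trace converts to an average over $\Omega\cap B(x_0,r)$ via the lower density bound on Lipschitz domains is a detail the paper leaves implicit.
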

\begin{proof}
Define $f_i$ and $v_N$ as in the preceding theorem. 
By Step 1 of its proof, there  exists a nonnegative $u \in \BV(\Omega)$ such that, for every $M \in \mathbb{N}$, we have $M v_N \leq u$ for $\mathscr{L}^n$-almost everywhere.
for all sufficiently large $N$. 

By the scalar boundary trace theorem \cite[Theorem 3.87]{ambrosio2000functions}, $u$ and all of the $f_i$ have finite boundary one-sided approximate limits outside of a common $\mathscr{H}^{n-1}$-negligible subset of $\partial \Omega$. 

At each remaining boundary point $x_0$, apply the reconstruction argument from Step 2 of the proof of Theorem~\ref{thm:approximate-discontinuity-rectifiability} with $B_r \coloneq \Omega \cap B(x_0, r)$. 
This gives a point $w|_{\partial \Omega}(x_0) \in Y$ satisfying 
\[
\lim_{r\to 0+} \dashint_{\Omega\cap B(x_0, r)} \delta\paren{w, w|_{\partial\Omega}(x_0)}\,d\mathscr{L}^n = 0.
\]
This concludes the proof.
\end{proof}

\section{The Volume Exchange Metric and Dissimilarity}\label{sec:volume}
In this section, we define three notions of piecewise constant (PC) fillings between two piecewise constant functions $p,q:\Omega\to Y.$ 
After defining standard piecewise constant (PC) fillings between $p$ and $q$, we will define two types of \emph{weak} PC fillings. 
Our weakest type of PC filling (type-2) will only need to preserve distances between values corresponding to adjacent level-sets of our trace boundary condition, whereas type-1 PC fillings will need to preserve distances between all level sets.
As explained below, these will correspond to the two notions of cellular fillings used to define the one-to-one volume exchange metric and many-to-many volume exchange dissimilarity in the introduction. 

We say that a function $w\colon \Omega \to Y$ is \textit{\textbf{piecewise constant in $\Omega$} if there exists a Caccioppoli partition $\{E_\alpha\}_{\alpha \in A}$ of $\Omega$ and a function $y \colon A\to Y$ such that for each $\alpha \in A$, $w(x) = y_\alpha$ for $\mathscr{L}^n$-a.e.~$x \in E_\alpha$, and 
\[
\sum_{\alpha\neq \beta} \delta\paren{y_\alpha, y_\beta} \mathscr{H}^{n-1}(\Omega\cap \partial^{\ast} E_\alpha \cap \partial^{\ast} E_\beta) < +\infty.
\]}
% Note that because we do not require $y$ to be injective, the piecewise constant function $w$ may be represented in many ways (even for a fixed partition and function $y$).
% In particular, for the quotient set $A_\sim$ of any equivalence relation $\sim$ on $A$ such that $y_\alpha = y_\beta$ for any $\alpha \sim \beta$, the function $\tilde{y}\colon A_\sim \to Y$ defined by $\tilde{y}_{[\alpha]} = y_\alpha$ provides us with another representation of $w$:
% \[
% w|_{E{\brac{\alpha}}}=\tilde{y}_{\brac{\alpha}} \forall \brac{\alpha}\in A_{\sim}\quad \text{ where } \quad E_{[\alpha]} \coloneq \bigcup_{\beta \in [\alpha]} E_\beta.
% \]
% The family of all such quotient sets of $A$ is partially ordered under refinements; $A_\sim$ is a refinement of $A_\approx$ (denoted by $A_{\sim} \leq A_\approx$) if $\alpha\sim \beta$ implies $\alpha \approx \beta$.
% Moreover, the collection of level sets of $y$ is the maximal element, and the collection of singletons of $A$ is minimal for any function $A\to Y$. 
% Note that this induces a partial ordering on the representations of $w.$

\begin{definition}[Piecewise constant filling]
Given $p, q \in \PC(\Omega; Y)$, a \textit{\textbf{piecewise constant (PC) filling between}} $p$ and $q$ is a piecewise constant function $w\colon \Omega\times I \to Y$ such that 
\begin{equation}\label{def:type-0-filling}
w|_{\Omega\times 0}(x, 0) = p(x) \quad \textit{ and } \quad w|_{\Omega\times 1}(x, 1) = q(x) \quad \textit{ for $\mathscr{L}^n$-a.e.}~x \in \Omega.
\end{equation}
We denote the family of all fillings between $p$ and $q$ by $\mathsf{F}(p, q)$.
\end{definition}

The following notion will be used to define our weakest type of PC filling. 
A function $\kappa\colon \im{p}\to \im{p}$ is called a \textit{\textbf{coarsening map for}} $p$ if
\[
\delta(\kappa(y), \kappa(y')) = \delta(y, y') \text{ for all } y, y' \in Y  \text{ such that } 
\mathscr{H}^{n-1}(\partial^{\ast} \{p = y\} \cap \partial^{\ast} \{p = y'\}) > 0.
\]
With any coarsening map $\kappa$ of $p$, one can define a new piecewise constant function $\kappa \circ p$ which we call a coarsening of $p.$ Denote the collection of coarsenings of $p$ by $\mathcal{C}\paren{p}.$ 
A level-set of $\kappa\circ p$ is the union of non-adjacent level-sets of $p$. 

\begin{definition}[Weak piecewise constant filling]\label{def:weak_filling}
Let $p, q\in \PC(\Omega; Y)$ and $w \in \PC(\Omega\times I; Y)$.
If there exists an isometry $\psi \colon Y\to Y$ such that $w$ is a filling between $p$ and $\psi \circ q$, then $w$ is called a \textit{\textbf{weak type-1 PC filling between}} $p$ and $q$.
If there exists coarsening maps $\kappa_0$ and $\kappa_1$ for $p$ and $q$ such that $w$ is a weak type-1 filling between $\kappa_0\circ p$ and $\kappa_1\circ q$, then $w$ is called a \textbf{weak type-2 PC filling between} $p$ and $q$. 

For $t \in \{1, 2\}$, we let $\mathsf{F}^{(t)}(p, q)$ denote the set of of all weak type-$t$ piecewise constant fillings between $p$ and $q$.
\end{definition}

\begin{figure}[ht]
\begin{minipage}[t]{0.33\textwidth}
\begin{center}
\begin{tikzpicture}[
    line join=round,
    line cap=round
]

%------------------------------------------------
% Parameters
%------------------------------------------------
\def\hexradius{1}  % Side length of each hexagon
\def\diskradius{2}  % Radius of the disk
\def\latticerange{7}  % Number of lattice layers to draw

%------------------------------------------------
% Assign colors to individual cells.
%
% Syntax:
%     \sethexcolor{q}{r}{color}
%
% Cells not listed here are white.
%------------------------------------------------
\newcommand{\sethexcolor}[3]{%
    \expandafter\def\csname hexcolor@#1@#2\endcsname{#3}%
}

\sethexcolor{0}{0}{brown!80}
\sethexcolor{1}{0}{green!50!black!60}
\sethexcolor{0}{1}{blue!50}
\sethexcolor{-1}{1}{red!70}
\sethexcolor{-1}{0}{blue!50}
\sethexcolor{0}{-1}{pink!90}
\sethexcolor{1}{-1}{blue!50}

% Return the assigned color, or white if none was assigned.
\newcommand{\hexcolor}[2]{%
    \ifcsname hexcolor@#1@#2\endcsname
        \csname hexcolor@#1@#2\endcsname
    \else
        white%
    \fi
}

%------------------------------------------------
% Draw one hexagonal cell with axial coordinates
% (q,r).
%
% Its Euclidean center is
%
% x = (3/2)sq,
% y = sqrt(3)s(r+q/2),
%
% where s is the hexagon side length.
%------------------------------------------------
\newcommand{\drawhexcell}[2]{%
    \pgfmathsetmacro{\centerx}{1.5*\hexradius*(#1)}
    \pgfmathsetmacro{\centery}{
        sqrt(3)*\hexradius*((#2)+(#1)/2)
    }
    \edef\thiscolor{\hexcolor{#1}{#2}}

    \path[
        draw=black,
        line width=0.35pt,
        fill=\thiscolor
    ]
        ({\centerx+\hexradius*cos(0)},
         {\centery+\hexradius*sin(0)})
        \foreach \angle in {60,120,...,300}{
            --
            ({\centerx+\hexradius*cos(\angle)},
             {\centery+\hexradius*sin(\angle)})
        }
        -- cycle;

    % Uncomment to display the lattice coordinates.
    %
    % \node[font=\tiny] at (\centerx,\centery)
    %     {$(#1,#2)$};
}

%------------------------------------------------
% Clip the entire lattice to the disk
%------------------------------------------------
\begin{scope}
    \clip (0,0) circle[radius=\diskradius];

    \foreach \q in {-\latticerange,...,\latticerange}{
        \foreach \r in {-\latticerange,...,\latticerange}{
            \drawhexcell{\q}{\r}
        }
    }
\end{scope}

% Draw the boundary of the disk on top.
\draw[
    black,
    line width=1pt,
    dashed
]
    (0,0) circle[radius=\diskradius];
\draw[fill=black](0, -1.3) circle[radius=2.5pt];
\draw(.1,-1.3) node[right] {$x_0$};
\end{tikzpicture}
\end{center}
\end{minipage}%
\hspace{10mm}
\begin{minipage}[t]{0.33\textwidth}
\begin{tikzpicture}[scale=1]
    % Axes
    \draw[->] (-0.5,0) -- (4,0);
    \draw[->] (0,-0.5) -- (0,3.5);
    % Point data: name/x/y/color
    \def\pointdata{
        A/1.2/1.2/red!70,
        B/1.4/2.9/blue!50,
        C/2.4/0.2/green!50!black!60,
        D/3.2/2.7/pink!90,
        E/3.8/.9/brown!80
    }
    % Create named coordinates
    \foreach \name/\x/\y/\col in \pointdata {
        \coordinate (\name) at (\x,\y);
    }
    % Draw all edges between distinct pairs
    \draw[black,line width=1.2pt] (A) -- (B);
    \draw[black,line width=1.2pt] (B) -- (D);
    \draw[black,line width=1.2pt] (E) -- (B);
    \draw[black,line width=1.2pt] (E) -- (A);
    \draw[black,line width=1.2pt] (E) -- (D);
    \draw[black,line width=1.2pt] (E) -- (C);
    \draw[black,line width=1.2pt] (C) -- (B);
    \draw[black,line width=1.2pt,dashed] (A) -- (D);
    \draw[black,line width=1.2pt,dashed] (A) -- (C);
    \draw[black,line width=1.2pt,dashed] (C) -- (D);
    % Draw the points on top
    \foreach \name/\x/\y/\col in \pointdata {
        \fill[\col] (\name) circle[radius=3pt];
    }
    \draw(3.3,2.85) node[above] {$g_k(x_0) = p$};
    \draw(3.8,1.2) node[right] {$q$};
\end{tikzpicture}
\end{minipage}
\caption{The disk $\Omega$ on the left is the domain of a piecewise constant function $p$ whose image consists of the $5$ points on the right. The collection of cells of a single color depicts the level set of $p$ at the point with the same color. The isometry $\psi$ used within a weak type 1 filling must preserve the lengths of all line-segments. However within a weak type 2 filling, $\psi\circ\kappa$ only needs to preserve the lengths of the solid line-segments because the cells corresponding to the dashed lines are not adjacent.}
\end{figure}

Suppose that $Y$ is a complete, separable metric space and $\Omega$ is a bounded, connected open subset of $\mathbb{R}^n$ with Lipschitz boundary. 
Whenever $w \in \BV(\Omega\times I; Y)$ Theorem~\ref{thm:approximate-discontinuity-rectifiability} tells us that there exists a Borel map $\unitnormal_w\colon S_w\to \mathbb{S}^n$ where $\unitnormal_w(x)$ is normal to the approximate tangent space $\mathrm{Tan}^n(\mathscr{H}^n\allardell S_w)$ of $S_w$.
The tangential Jacobian $J^{S_w}\pi$ of the orthogonal projection $\pi\colon \mathbb{R}^n\times \mathbb{R} \to \mathbb{R}^n$ is given by $|\langle \verticalvec, \unitnormal_w(x)\rangle|$ for $\mathscr{H}^{n}$ a.e.~$x\in S_w$. 
By the generalized area formula (see \cite[3.2.22]{federer2014geometric}),
\begin{equation}\label{def:projected-g-area}
\VolEx_\delta\brac{w} = \int_{\Omega\times 0} \left(\int_{S_w \cap \pi^{-1}(x)} \delta(w^+, w^-)\,d\mathscr{H}^0\right)\,d\mathscr{L}^n(x) = \int_{S_w} \delta(w^+, w^-)|\langle {\verticalvec}, \unitnormal\rangle|\,d\mathscr{H}^{n}.
\end{equation}
By minimizing $\VolEx_\delta$ over our three different types of fillings, we are able to equip $\PC\paren{\Omega; Y}$ with the \textit{\textbf{projected surface energy}} $\metric_{\mathrm{V}}$, the \textit{\textbf{volume exchange pseudometric}} $\metric^{1}_{\mathrm{V}}$, and the \textit{\textbf{volume exchange dissimilarity}} $\metric^{2}_{\mathrm{V}}$.
They are defined, respectively, by 
\begin{equation}\label{def:volume-exchange-metrics}
\metric_{\mathrm{V}}\paren{p, q} \coloneq \inf_{w \in \mathsf{F}(p, q)}\VolEx_\delta\brac{w}, \quad  
\metric^{1}_{\mathrm{V}}(p, q) \coloneq \inf_{w \in \mathsf{F}^{(1)}(p, q)} \VolEx_{\delta}\brac{w}, \quad 
\metric^{2}_{\mathrm{V}}(p, q) = \inf_{w \in \mathsf{F}^{(2)}(p, q)} \mathscr{E}_{\delta}\,[w].
\end{equation}
for all $p, q \in \PC(\Omega; Y)$. 

\begin{remark}
Directly from the definitions we can see that for any two piecewise constant functions $p, q \colon \Omega \to Y$,
\[
\mathsf{F}(p, q) \subset \mathsf{F}^{(1)}(p, q) \subset \Fill^{(2)}(p, q),
\] 
and hence, $\metric_{\mathrm{V}} \geq \metric_{\mathrm{V}}^1 \geq \metric_{\mathrm{V}}^2.$
\end{remark}

Before we prove Theorem~\ref{thm:l1_energy_for_metric_BV}, let us state an important corollary of it, which says that these three ``metrics'' are given to us by minimizing the $L^1$-norm of $\delta\paren{f, g}$, where $f$ and $g$ are obtained by post-composing $p$ and $q$ with the functions from $Y$ to $Y$ we used to define our fillings. 
\begin{corollary}\label{cor:iso}
Let $\paren{Y,\delta}$ be a complete, separable metric space. If $p, q\colon \Omega \to Y$ are piecewise constant on a bounded, connected, open domain $\Omega\subset\R^n$ then
\begin{align*}
\metric_{\mathrm{V}}\paren{p, q} &= \int_\Omega \delta\paren{p, q}\,d\mathscr{L}^n\\
  \metric^{1}_{\mathrm{V}}(p, q)&= \inf_{\psi_1, \psi_2 \in \Iso(Y)}\int_\Omega\delta(\psi_1\circ p, \psi_2\circ q)\,d\mathscr{L}^n\\
  \metric^{2}_{\mathrm{V}}(p, q) &= \inf_{\psi_1, \psi_2 \in \Iso(Y)}\inf_{\substack{p'\in\mathcal{C}\paren{p},\\q'\in\mathcal{C}\paren{q}}} \int_\Omega\delta(\psi_1\circ p',\psi_2\circ q')\,d\mathscr{L}^n.
\end{align*}
\end{corollary}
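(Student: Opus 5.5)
The plan is to obtain all three identities from Theorem~\ref{thm:l1_energy_for_metric_BV}, which we take as given here. It says that for piecewise constant $p,q$ we have $\min_{w\in\mathsf{F}(p,q)}\VolEx_\delta[w]=\int_\Omega\delta(p,q)\,d\mathscr{L}^n$. Combined with the definition of $\metric_{\mathrm{V}}$ in \eqref{def:volume-exchange-metrics}, this is exactly the first identity. The other two identities follow by rewriting $\mathsf{F}^{(1)}$ and $\mathsf{F}^{(2)}$ as unions of ordinary filling classes and then exchanging infima.

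For $\metric^1_{\mathrm{V}}$: by Definition~\ref{def:weak_filling}, $\mathsf{F}^{(1)}(p,q)=\bigcup_{\psi\in\Iso(Y)}\mathsf{F}(p,\psi\circ q)$. Isometries are Borel, so $\psi\circ q$ is again piecewise constant on the same Caccioppoli partition, and its finiteness condition is unchanged because $\delta(\psi y_\alpha,\psi y_\beta)=\delta(y_\alpha,y_\beta)$. Hence Theorem~\ref{thm:l1_energy_for_metric_BV} applies to each pair $(p,\psi\circ q)$ and gives
\[
\metric^1_{\mathrm{V}}(p,q)=\inf_{\psi\in\Iso(Y)}\int_\Omega\delta(p,\psi\circ q)\,d\mathscr{L}^n .
\]
It remains to compare this with the two-isometry infimum in the statement. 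Since $\Iso(Y)$ is a group and $\delta(\psi_1 a,\psi_2 b)=\delta(a,\psi_1^{-1}\psi_2 b)$, every pair $(\psi_1,\psi_2)$ yields the same integrand as the single isometry $\psi=\psi_1^{-1}\psi_2$. Conversely, a single $\psi$ is realized by the pair $(\mathrm{id},\psi)$. So the two infima agree.

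For $\metric^2_{\mathrm{V}}$: similarly, $\mathsf{F}^{(2)}(p,q)=\bigcup_{\kappa_0,\kappa_1}\bigcup_{\psi}\mathsf{F}(\kappa_0\circ p,\psi\circ\kappa_1\circ q)$. The pairs $(\kappa_0\circ p,\kappa_1\circ q)$ range exactly over $\mathcal{C}(p)\times\mathcal{C}(q)$. The one point to verify before invoking Theorem~\ref{thm:l1_energy_for_metric_BV} is that a coarsening $\kappa\circ p$ is still piecewise constant in $\Omega$. To see this, take the partition of $\Omega$ into level sets of $\kappa\circ p$; each is a union of level sets of $p$. The perimeters are controlled because the essential boundary of a union is contained in the union of the essential boundaries. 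The weighted interface sum is unchanged because $\kappa$ preserves $\delta$ on adjacent pairs, and non-adjacent pairs contribute zero $\mathscr{H}^{n-1}$-measure. Interfaces internal to a merged set, which come from non-adjacent level sets of $p$, have $\mathscr{H}^{n-1}$-measure zero and so contribute nothing. With this in hand, apply the theorem to each pair and reduce two isometries to one exactly as in the previous paragraph, yielding the third formula.

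I expect the main obstacle to be the check that $\kappa\circ p$ is piecewise constant in the sense of the paper: bounding the perimeter of a countable union of pairwise non-adjacent Caccioppoli cells by the sum of their perimeters, and controlling the weighted interface sum through the adjacency-preserving property of $\kappa$. I would handle the perimeter bound by lower semicontinuity of perimeter applied to finite unions, where $P(E\cup F)\le P(E)+P(F)$ holds. Everything else is bookkeeping: the infimum over a union of sets equals the iterated infimum, and the group structure of $\Iso(Y)$ handles the isometries.
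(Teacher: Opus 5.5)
Your proposal is correct and follows the paper's route. The paper gives no separate proof and treats the statement as an immediate consequence of Theorem~\ref{thm:l1_energy_for_metric_BV}, applied to the pairs $(p,\psi\circ q)$ and $(\kappa_0\circ p,\psi\circ\kappa_1\circ q)$ after unwinding $\mathsf{F}^{(1)}$ and $\mathsf{F}^{(2)}$ as unions of ordinary filling classes. Your reduction of $(\psi_1,\psi_2)$ to $\psi_1^{-1}\psi_2$ and your check that these compositions stay piecewise constant supply details the paper leaves unstated.

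One simplification for the coarsening check: keep the original Caccioppoli partition $\{E_\alpha\}$ of $p$ and assign the values $\kappa(y_\alpha)$. The definition of piecewise constant does not require the partition to consist of level sets. Moreover, adjacent cells with $y_\alpha\neq y_\beta$ lie in adjacent level sets of $p$, so $\kappa$ preserves $\delta(y_\alpha,y_\beta)$ and the weighted interface sum is unchanged. This avoids perimeter bounds for countable unions entirely. It also sidesteps your remark that $\partial^\ast$ of a union lies in the union of the $\partial^\ast$'s, which holds only for finite unions.
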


\begin{proof}[Proof of Theorem~\ref{thm:l1_energy_for_metric_BV}]
We will first show that for any $w \in \PC(\Omega\times I; Y)$,
\begin{equation}\label{eq:main-equality}
\int_\Omega|Dw_x|(I)\,d\mathscr{L}^{n}(x) = \int_{S_w} \delta\paren{w^+, w^-}\,|\langle \verticalvec, \unitnormal_w\rangle|\,d\mathscr{H}^{n}.
\end{equation}

Theorem~\ref{thm:approximate-discontinuity-rectifiability}, tells us that the approximate discontinuity set $S_w$ of $w$ is countably $\mathscr{H}^n$-rectifiable, and has an orienting Borel map $\unitnormal_w\colon S_w \to \mathbb{S}^{n}$ such that the one sided approximate limits of $w$ exist $\mathscr{H}^{n}$-a.e.~in $S_w$. 
Moreover, it defines Borel maps $w^+, w^-\colon S_w \to Y$. 
Therefore, by the generalized area formula,
\[
\int_{S_w} \delta(w^+, w^-)|\langle\verticalvec, \unitnormal_w\rangle|\,d\mathscr{H}^{n} = \int_\Omega \sum_{\{t : (x, t) \in S_w\}}\delta(w^+(x, t), w^-(x, t))\,d\mathscr{L}^{n}(x).
\]
It remains to show 
\[
|Dw_x|(I) = \sum_{\{t : (x, t) \in S_w\}}\delta(w^+(x, t), w^-(x, t)) \quad \text{ for $\mathscr{L}^n$-a.e.}~x\in \Omega.
\]
Let $\{y_i\}$ be a countable dense subset of $Y$ and define $f_i \coloneq \delta(w, y_i)$ for each $i$.
Since $w \in \PC(\Omega; Y)$, each $f_i$ is a piecewise-constant scalar-valued $\BV$ function. 
The reverse triangle inequality and the existence of one-sided traces $w^\pm$ for $w$ from Theorem~\ref{thm:approximate-discontinuity-rectifiability} implies 
\[
f_i^\pm(x) = \delta(w^\pm, y_i) \quad \text{ for $\mathscr{L}^n$-a.e.}~x\in \Omega.
\]
By \cite[Theorem 3.108, Remark 3.109]{ambrosio2000functions} for almost every $x \in \Omega$,
\[
|D (f_i)_x| = \sum_{\{t : (x, t) \in S_w\}} \Big|\delta\paren{w^+(x, t), y_i) - \delta\paren{w^- (x, t), y_i}}\Big|\,\boldsymbol{\delta}_t \quad \text{ for all } i,
\]
where $\boldsymbol{\delta}_t$ denotes the Dirac delta measure supported on $t$. 

By Lemma~\ref{lem:restriction-is-almost-always-bv}, $(\mathscr{L}^n\allardell \Omega)$-almost every vertical restriction $w_x$ of $w$ is a $\BV$-curve. 
Additionally, for any such $x$, 
\[
|D w_x| = \bigvee_i |D(f_i)_x|.
\]
Indeed, in the last part of the proof of Lemma~\ref{lem:restriction-is-almost-always-bv}, we show that $|Dw_x| \leq \vee_i |D(f_i)_x|$. 
Furthermore, since $\varphi_i(y)\coloneq \delta\paren{y, y_i}$ is 1-Lipschitz and $(f_i)_x = \varphi_i\circ w_x$, the definition of $|Dw_x|$ gives us $|D(f_i)_x| \leq |D w_x|$. 

Thus 
\[
|D w_x|(I) = \Big(\bigvee_i |D(f_i)_x|\Big)(I) = \sum_{\{t : (x, t) \in S_w\}} \sup_i \Big|\delta\paren{w^+(x, t), y_i) - \delta\paren{w^- (x, t), y_i}}\Big|;
\]
and because $\delta(y, y') = \sup_i |\delta(y, y_i) - \delta(y', y_i)|$ for any $y, y' \in Y$, we get that
\[
|Dw_x|(I) = \sum_{\{t : (x, t) \in S_w\}} \delta\paren{w^+(x, t), w^-(x, t)}.
\]
Therefore, 
\[
\int_{S_w} \delta\paren{w^+, w^-}\,|\langle \verticalvec, \unitnormal_w\rangle|\,d\mathscr{H}^{n} = \int_\Omega|Dw_x|(I)\,d\mathscr{L}^{n}(x).
\]

The step function $w^\ast$ (defined in Lemma~\ref{lem:step-function}) corresponding to the piecewise constant trace boundaries $p$ and $q$, is piecewise constant, and hence belongs to $\mathsf{F}(p, q)$. 
The proof of Lemma~\ref{lem:step-function} shows that any step function obtains the minimum in the statement of Theorem~\ref{thm:general-bv-statement}. 
Therefore, restricting the minimization to $\mathsf{F}(p, q)$ leaves the minimum unchanged, and our conclusion follows from equality (\ref{eq:main-equality}).
\end{proof}

Next, we relate these definitions to those given in the introduction. 
\begin{prop}\label{prop:VEFunction}
Let $Y$ be the discrete metric space on $\N$ and let $\cellstrucP=\set{E_\alpha'}_{\alpha\in A}$ and $\cellstrucQ=\set{E_\beta''}_{\beta\in B}$  be cell structures on $\Omega$ where $A,B\subset \N$ satisfy $\abs{\N\setminus A},\abs{\N\setminus B}=\infty.$ If $p,q:\Omega\to Y$ are defined by setting $p\paren{E_\alpha'}=\alpha$ and $q\paren{E_\beta''}=\beta$ for all $\alpha\in A$ and $\beta\in B$ then
$$\metric^{1}_{\mathrm{V}}(\cellstrucP, \cellstrucQ) =\metric^{1}_{\mathrm{V}}(p, q) $$
and
$$\metric^{2}_{\mathrm{V}}(\cellstrucP, \cellstrucQ) =\metric^{2}_{\mathrm{V}}(p, q).$$
\end{prop}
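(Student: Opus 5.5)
The plan is to build an explicit correspondence between the two notions of filling, in both directions, and to show that it preserves the energy exactly. Throughout, $Y$ is $\N$ with the discrete metric, so $\delta(y,y')=\mathbbm{1}_{y\neq y'}$.

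\emph{From PC fillings to cellular fillings.} Take $w\in\mathsf{F}^{(1)}(p,q)$. Then $w$ is a filling between $p$ and $\psi\circ q$ for some isometry $\psi$ of $Y$, which here is just a bijection of $\N$. Let $\cellstrucW$ be the partition of $\Omega\times I$ into the nonempty level sets $\{w=k\}$. This is a Caccioppoli partition because $w$ is piecewise constant with
\[
\sum_{k\neq l}\mathscr{H}^n(\partial^\ast\{w=k\}\cap\partial^\ast\{w=l\})<\infty .
\]
The boundary trace condition $w|_{\Omega\times 0}=p$ a.e.\ says that at a.e.\ $x\in E'_\alpha$ the density of $\{w=\alpha\}$ at $(x,0)$, relative to the half-ball, equals one. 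From this I will deduce that $\partial^\ast\{w=\alpha\}\cap(\Omega\times 0)$ agrees with $E'_\alpha\times 0$ up to an $\mathscr{H}^n$-null set. Here I use that for the discrete metric the trace condition says exactly that the proportion of the half-ball where $w\neq\alpha$ tends to zero. The same argument works on the top face, with cell $E''_\beta$ matched to the level set $\{w=\psi(\beta)\}$.

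Distinct labels give distinct cells, so each $W$ meets the bottom face in (up to null sets) a single cell of $\cellstrucP$, and likewise on the top. Hence $\cellstrucW\in\Fill^1(\cellstrucP,\cellstrucQ)$.

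Next I identify the energies. Up to $\mathscr{H}^n$-null sets, $S_w\cap(\Omega\times I)$ equals $\partial^\circ\cellstrucW$. This follows from Federer's structure theorem for Caccioppoli partitions, which says that $\mathscr{H}^n$-a.e.\ point lies either in one $W^{(1)}$ or in exactly two reduced boundaries (\cite[Theorem 4.17]{ambrosio2000functions}). At such jump points $\delta(w^+,w^-)=1$, so $\VolEx_\delta[w]=\VolEx[\cellstrucW]$, using the two expressions in \eqref{def:projected-area} and \eqref{def:projected-g-area}.

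\emph{From cellular fillings to PC fillings.} Given $\cellstrucW\in\Fill^1(\cellstrucP,\cellstrucQ)$, I label the cells as follows:
\begin{itemize}
\item a cell touching $E'_\alpha$ on the bottom gets label $\alpha$;
\item a cell touching only the top at $E''_\beta$ gets a fresh label from $\N\setminus A$;
\item cells touching neither face get further fresh labels.
\end{itemize}
This is where the hypotheses $|\N\setminus A|=|\N\setminus B|=\infty$ are used. I have to check that the labeling is injective. Two distinct $W$ cannot both match the same $E'_\alpha$, since their traces would overlap on a set of positive measure while the essential boundaries of disjoint sets in a partition meet the face in disjoint sets a.e.\ (density $1$ from each side is impossible). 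So label collisions do not occur.

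The resulting $w$ has bottom trace $p$. Its top trace is $\psi\circ q$, where $\psi$ is any bijection of $\N$ sending $\beta$ to the label of the cell matched to $E''_\beta$; this can be extended to a bijection because both complements are infinite. The piecewise-constant finiteness condition is just the perimeter bound. To get the trace from the essential-boundary condition, I argue that the local density of $W$ in the half-ball at a.e.\ point of $E'_\alpha\times 0$ tends to one. For this I plan to invoke the trace theorem for the BV function $\chi_W$ on the Lipschitz domain $\Omega\times I$ \cite[Theorem 3.87]{ambrosio2000functions}: its trace is $\{0,1\}$-valued and equals $1$ exactly on $\partial^\ast W\cap(\Omega\times 0)$ up to null sets. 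The energies again agree, so the two infima coincide. Strictly speaking this needs $\Omega$ Lipschitz for the trace theorem, and otherwise I would work locally.

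\emph{Many-to-many case.} A coarsening of $\cellstrucP$ is a union of non-adjacent cells, which corresponds exactly to $\kappa_0\circ p$ for a coarsening map $\kappa_0$. Since adjacent distinct labels are at distance $1$, distance preservation on adjacent pairs just means adjacent cells get distinct labels. Applying the type-1 correspondence to $(\kappa_0\circ p,\kappa_1\circ q)$, after relabelling $\kappa_0\circ p$ injectively into $\N$ by an isometry, gives the second equality.

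\emph{Main obstacle.} I expect the hardest part to be the measure-theoretic identification of the boundary trace condition with the essential-boundary condition on the faces $\Omega\times\{0,1\}$, in both directions, and in particular the claim that a.e.\ on the bottom face only one cell has positive density.
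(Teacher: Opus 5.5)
Your proposal is correct and follows essentially the same route as the paper: an explicit energy-preserving correspondence between weak type-1 PC fillings and one-to-one cellular fillings (level sets in one direction; in the other, cells labelled by $\alpha$ on the bottom with the top matching absorbed into a permutation of $\N$), with the many-to-many case reduced to the one-to-one case by identifying coarsenings of $\cellstrucP$ with $\kappa_0\circ p$. Your measure-theoretic details justify what the paper only asserts ``by construction'': the local trace theorem for $\chi_W$ shows the face traces are $\{0,1\}$-valued and sum to $1$, so every $E'_\alpha$ is matched exactly once, and Federer's structure theorem identifies $S_w$ with $\partial^\circ\cellstrucW$. The one point to tighten is to draw the fresh labels from a subset of $\N\setminus A$ with infinite complement; otherwise $\psi(B)$ can be all of $\N$ and $\psi$ need not extend to a bijection.
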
\label{prop:minfill}
\begin{proof}
It suffices to show that for $i\in\set{1,2}$ and every $\cellstrucW\in \Fill^{i}\paren{\cellstrucP,\cellstrucQ}$ there is a $w\in \mathsf{F}^i\paren{p,q}$ with 
$$\VolEx\paren{\cellstrucW}=\VolEx_{\delta}\brac{w}$$
and vice versa.

We begin with the case $i=1.$ 
First, let $\cellstrucW\in \Fill^{1}\paren{\cellstrucP,\cellstrucQ}$ and denote its cells by $\set{E_{\gamma}}_{\gamma\in C}.$  For convenience, we treat $\cellstrucP$ and $\cellstrucQ$ as partitions of $\Omega\times\set{0}$ and $\Omega\times\set{1},$ respectively. By definition, the cells of $\cellstrucP$ are --- up $\mathscr{H}^n$-negligible sets --- the $\mathscr{H}^{n}$-non-negligible intersections $\partial^{\ast} E\cap   \Omega\times\set{0}$ for cells $E\in\cellstrucW,$ and analogously for $\cellstrucQ.$ Say that a cell $E$ of $\cellstrucW$ corresponds to a cell $E'$ of $\cellstrucP$ or $\cellstrucQ$ if $\mathscr{H}^{n}\paren{\partial^{\ast}E\cap E'}>0.$ We may reindex the cells of $\cellstrucW$ if necessary so that $E_{\alpha}$ corresponds to $E_{\alpha}'$ for all $\alpha \in A,$ where we are implicitly using that $\abs{\N\setminus A}=\infty.$  Choose a permutation $\phi:Y\to Y$ so that $E_{\phi^{-1}\paren{\beta}}$ is the cell of $\cellstrucW$ corresponding to $E_{\beta}''$ for all $\beta \in B.$  

Define $w:\Omega\times I\to Y$ by $w\paren{E_\gamma}=\gamma.$ 
By construction, $w\mid_{\Omega\times 0}=p$ and $w\mid_{\Omega\times 1}=\phi^{-1}\circ q$,  That is, $w$ defines a weak type-1 PC filling between $p$ and $q.$ Comparing Equations~\ref{def:projected-area} and~\ref{def:surface-energy}, we have that
$$\VolEx\paren{\cellstrucW}=\VolEx_{\delta}\brac{w}.$$

On the other hand, let $w\in \mathsf{F}^1\paren{p,q}$ and let $\cellstrucW$ be the partition of $\Omega\times I$ into level sets of $w.$ By construction, $\cellstrucW\in \Fill^{1}\paren{\cellstrucP,\cellstrucQ}$ and  
$$\VolEx\paren{\cellstrucW}=\VolEx_{\delta}\brac{w}.$$

Now, let $i=2$ and $\cellstrucW\in \Fill^{2}\paren{\cellstrucP,\cellstrucQ}$ so $\cellstrucW$ is a type-1 cellular filling between coarsenings of $\cellstrucP'$ of $\cellstrucP$ and $\cellstrucQ'$ of $\cellstrucQ.$ Index the cells of $\cellstrucP'$ and $\cellstrucQ'$ by index sets $A'\subset A$ and $B'\subset B$ so the index of a cell equals the indices of one of the constituent cells from the respective finer partition.  Let $p':\Omega\to Y,q':\Omega\to Y$ be as in the statement but for $\cellstrucP'$ and $\cellstrucQ'$ rather than $\cellstrucP$ and $\cellstrucQ.$ Then $p'$ and $q'$ are coarsenings of $p$ and $q,$ respectively. Applying the same argument as in the case $i=1$ produces a permutation $\phi:Y\to Y$ and a PC filling $w:\Omega\times(0,1)\to Y$ between $p'$ and $\phi\circ q'$ so that 
$$\VolEx\paren{\cellstrucW}=\VolEx_{\delta}\brac{w}.$$
Since $p'$ and $q'$ are coarsenings of $p$ and $q,$ $w$ is the desired weak type-2 PC filling between $p$ and $q.$ 
The argument for the other direction is identical to that for $i=1.$ 
%and denote its cells by $E_1,\ldots,E_k.$ Suppose that the non-negligible components of $E_{j}\cap \Omega\times\set{0}$ and $E_{j}\cap \Omega\times\set{1}$  are $C_j^1\ldots,C_j^{m_j}$ and $D_j^1\ldots,D_j^{m_l},$ respectively (where we allow $m_j=0$ and/or $m_l=0$). Now, define $p':\Omega\to Y$ by setting $p'\paren{C_j^l}=p\paren{C_j^1}$ where we allow $j$ to range from $1$ to $k$ and $m$ to range from $1$ to $m_j.$ Define $q'$ analogously by setting $q'\paren{D_j^l}=q\paren{D_j^1}$ for each $j$ and $l.$ Then $p'$ and $q'$ are coarsenings of $p$ and $q.$ Repeating the construction from the first part of the argument, we obtain a weak type-1 PC filling $w$ between $p'$ and $q'$ --- which is then a weak type-2 filling between $p$ and $q$ --- so that
%$$\VolEx\paren{\cellstrucW}=\VolEx_{\delta}\paren{w}.$$
\end{proof}

\section{The Discrete Volume Exchange Metric and Dissimilarity}
\label{sec:volume_discrete}
% \subsection{A Dual Formulation}

Here, we define a discrete analog of the continuous volume exchange dissimilarities, prove their equality, and describe algorithms to compute them. Theorem~\ref{thm:main} reduce these a priori continuous problems to discrete optimizations, and Theorem~\ref{thm:dual-equality} converts the former into a linear assignment problem. We begin by reviewing the discrete analogue of the one-to-one volume exchange metric.

For convenience, take $I, J\subset \N$  to be index sets. 
Let $M = \{U_i\}_{i \in I}$ and $N = \{V_j\}_{j \in J}$ be Caccioppoli partitions of an ambient space $\Omega \subset \mathbb{R}^n$, and $\mathrm{PM}(I,J)$ be the collection of all partial matchings between them. 
By Definition~\ref{def:partial-matching} each index in $I\cup J$ is seen exactly once in the partial matching. 
For the following definition, let $U_\ast, V_\ast = \emptyset $. 

As in the introduction, recall that the \textit{discrete one-to-one volume exchange metric} between cell structures $\cellstrucP$ and $\cellstrucQ$ is defined as
\[
\metric_{\mathrm{DV}}^{1}(\cellstrucP, \cellstrucQ) \coloneq \frac{1}{2}\min_{\partialmatching \in \PM(\cellstrucP, \cellstrucQ)}\sum_{(E, F) \in \partialmatching} \mathscr{L}^n(E \triangle F)
\]
where we will justify that the minimum exists below.
% where $E\triangle F \coloneq (A\setminus B) \cup (B\setminus A)$ denotes the \textbf{symmetric difference} between sets $A$ and $B$. 

\begin{lemma}\label{lem:l1_sym_diff}
If $(Y, \delta)$ is a countable discrete metric space and $p, q \in \PC(\Omega; Y)$, then 
\[
\int_\Omega \delta(p, q)\,d\mathscr{L}^n = \frac{1}{2}\sum_{i \in Y}\mathscr{L}^n(\{p = i\}\triangle \{q = i\}).
\]
\end{lemma}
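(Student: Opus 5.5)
The plan is a pointwise identity followed by integration and Tonelli's theorem. Since $Y$ is a countable discrete metric space, $\delta(y,y')=\mathbbm{1}_{\{y\neq y'\}}$. (I assume unit distances, as in the introduction. If "discrete" allowed other values the identity would fail, so I will state this normalization explicitly.) Hence $\int_\Omega \delta(p,q)\,d\mathscr{L}^n = \mathscr{L}^n(\{x\in\Omega : p(x)\neq q(x)\})$.

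First, for each fixed $x\in\Omega$ I will verify the pointwise identity
\[
2\,\mathbbm{1}_{\{p(x)\neq q(x)\}} = \sum_{i\in Y}\mathbbm{1}_{\{p=i\}\triangle\{q=i\}}(x).
\]
The verification splits into two cases.
\begin{itemize}
\item If $p(x)=q(x)=i_0$, then $x$ lies in both $\{p=i_0\}$ and $\{q=i_0\}$, and in neither $\{p=i\}$ nor $\{q=i\}$ for $i\neq i_0$. Every term vanishes, so both sides are $0$.
\item If $p(x)=a\neq b=q(x)$, then $x\in\{p=a\}\setminus\{q=a\}$ and $x\in\{q=b\}\setminus\{p=b\}$. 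For every other $i$, $x$ lies in neither level set. The sum therefore equals exactly $2$.
\end{itemize}

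Next, I integrate this identity over $\Omega$. Each level set $\{p=i\}$ and $\{q=i\}$ is measurable, because $p,q$ are piecewise constant and $Y$ is countable. The summands are nonnegative and the index set is countable, so Tonelli (monotone convergence for series) lets me interchange the sum and the integral. This gives
\[
2\int_\Omega\delta(p,q)\,d\mathscr{L}^n=\sum_{i\in Y}\mathscr{L}^n(\{p=i\}\triangle\{q=i\}),
\]
which is the claim after dividing by $2$.

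I expect no real obstacle. The only care points are:
\begin{itemize}
\item Functions are defined only up to null sets. The identity holds for any representatives, and changing $p$ or $q$ on a null set changes neither side.
\item Both sides may be infinite only if $\mathscr{L}^n(\Omega)=\infty$. This cannot happen because $\Omega$ is bounded, so both sides are finite, and in any case the equality holds in $[0,\infty]$.
\end{itemize}
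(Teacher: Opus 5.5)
Your proof is correct and is essentially the paper's argument: the paper splits $\int_\Omega\delta(p,q)\,d\mathscr{L}^n$ over the level sets $E_i=\{p=i\}$ and again over $F_i=\{q=i\}$, identifies the pieces as $\mathscr{L}^n(E_i\setminus F_i)$ and $\mathscr{L}^n(F_i\setminus E_i)$, and averages the two decompositions, which is your pointwise identity integrated term by term. Your explicit remark that the identity needs $\delta(y,y')=1$ for $y\neq y'$ is right; the paper uses this normalization implicitly in the step $\sum_{j}\delta(i,j)\mathscr{L}^n(E_i\cap F_j)=\mathscr{L}^n(E_i\setminus F_i)$.
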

\begin{proof} Our piecewise constant functions $p$ and $q$ provide us with two partitions $\cellstrucP = \{E_i\}$ and $\cellstrucQ = \{F_i\}$ of $\Omega$, given by the level sets $E_i \coloneq p^{-1}(i)$ and $F_i \coloneq q^{-1}(i)$ over all $i \in Y$. By definition, the nonempty elements of $\cellstrucP$ and $\cellstrucQ$ form cell structures on $\Omega.$ 
With these partitions, observe that 
\[
    \sum_{i \in Y} \int_{E_i} \delta(p, q)\,d\mathscr{L}^n = \int_\Omega \delta(p, q)\,d\mathscr{L}^n = \sum_{i \in Y} \int_{F_i} \delta(p, q)\,d\mathscr{L}^n.
\]
Since
\begin{align*}
\int_\Omega \delta(p, q)\,d\mathscr{L}^n = \sum_{i,j \in Y} \delta(i, j)\mathscr{L}^n\paren{E_i\cap F_j}
\end{align*}
it follows that
\begin{align*}
\int_{E_i} \delta(p, q)\,d\mathscr{L}^n = \sum_{j \in q(E_i)} \delta(i, j)\mathscr{L}^n(E_i\cap F_j).
\end{align*} 
However, if $j=i$ for $j \in q(E_i)$ then $\delta\paren{i,j} = 0$. Thus, 
\[
    \sum_{j \in q(E_i)} \delta(i, j)\mathscr{L}^n(E_i\cap F_j) = \mathscr{L}^n(E_i\setminus F_i).
\]
So, 
\begin{align*}
    \int_\Omega \delta(p, q)\,d\mathscr{L}^n &= \frac{1}{2} \paren{\sum_{i \in Y} \int_{E_i} \delta(p, q)\,d\mathscr{L}^n + \sum_{j \in Y} \int_{F_j} \delta(p, q)\,d\mathscr{L}^n} \\
    &= \frac{1}{2} \paren{\sum_{i \in Y} \mathscr{L}^n(E_i\setminus F_i) + \sum_{j \in Y} \mathscr{L}^n(F_j\setminus E_j)} \\
    &= \frac{1}{2} \sum_{i \in Y} \paren{\mathscr{L}^n(E_i\setminus F_i) + \mathscr{L}^n(F_i\setminus E_i)} \\
    &= \frac{1}{2}\sum_{i \in Y}\mathscr{L}^n\paren{E_i\triangle F_i}.
\end{align*}
\end{proof}
\begin{proof}[Proof of Theorem~\ref{thm:main}]

Let $\cellstrucP=\set{E_\alpha'}_{\alpha\in A}$ and $\cellstrucQ=\set{E_\beta''}_{\beta\in B}$  be cell structures on $\Omega$ and denote by $\paren{Y,\delta}$ the discrete metric space on $\N,$ where we have chosen the index sets $A,B\subset \N$ so that $\abs{\N\setminus A}=\infty$ and $\abs{\N\setminus B}=\infty.$  Define piecewise constant functions $p,q:\Omega\to Y$ by setting $p\paren{E_\alpha'}=\alpha$ and $q\paren{E_\beta''}=\beta$ for $\alpha\in A,\beta\in B.$ By Proposition~\ref{prop:VEFunction} it suffices to show that 
$$\metric^{i}_{\mathrm{V}}(p, q)=\metric_{DV}^i\paren{\cellstrucP,\cellstrucQ}$$
for $i\in\set{1,2}.$ %As such, we assume that $k\geq \abs{A}+\abs{B}.$ For notational convenience we suppress the dependence of $Y_k,p_k,$ and $q_k$ on $k.$

% Let $(Y', \delta)$ be a countable, discrete metric space and have $\cellstrucP=\{E_\alpha\}_{\alpha \in A}$, $\cellstrucQ=\{F_\beta\}_{\beta \in B}$ be cell structures whose index sets $A,B$ are finite subsets of $Y'$. Recall that for each partition, the functions $p,q\colon \Omega \to Y'$ are piecewise constant on $\Omega$ if they are of the form \[p = \sum_{\alpha \in A}y_\alpha \chi_{E_{\alpha}} \quad \text{and} \quad q = \sum_{\beta \in B}y_\beta \chi_{F_{\beta}} \] where $\{y_{\alpha}\}_{\alpha \in A} , \{y_{\beta}\}_{\beta \in B} \subset Y'$. In the proof, we will apply Theorem~\ref{thm:l1_energy_for_metric_BV} to the functions $p,q.$ That theorem requires that the codomain of the functions and the piecewise constant fillings between them be proper. $Y'$ is not proper. This is not a problem; since a cellular filling is required to have finitely many cells it suffices to show that for any finite $Y\subset Y'$ containing $\im p\cup \im q$ with sufficiently many elements we have that
% $$\metric_{V}^i\paren{p_Y,q_Y}=\metric_{DV}^i\paren{\cellstrucP,\cellstrucQ}$$
% for $i\in\set{1,2}$ when $p_Y,q_Y$ are obtained by restricted the domain of the functions to $Y.$ As such, we assume that $Y$ is any such subset of $Y'$ with at least $\abs{A}+\abs{B}$ elements. For notational convenience we refer to $p_Y$ and $q_Y$ as $p$ and $q$ below. 

{\it Proof that $\metric_{\mathrm{DV}}^1=\metric_{\mathrm{V}}^1$}. Let $\partialmatching \in \PM(\cellstrucP,\cellstrucQ)$ %We aim to construct an isometry $\varphi_\partialmatching \in \mathrm{Iso}(Y)$ corresponding to $\partialmatching$. Notably, $\varphi_{\partialmatching}\colon Y \to Y$ is an isometry of $(Y, \delta)$ if and only if $\varphi_{\partialmatching}$ is a bijection. 
and let $f_A \colon A \to Y$ and $f_B \colon B \to Y$ be injective functions with the following properties:

\begin{itemize}
    \item[(i)] $f_A(\alpha)=f_B(\beta)$ if and only if $(E_\alpha',E_\beta'') \in \partialmatching$;
    \item[(ii)] $f_A(\alpha)\neq f_B(\beta)$ for any $\beta \in B$ if $(E_\alpha',\ast) \in \partialmatching$;
    \item[(iii)] $f_B(\beta)\neq f_A(\alpha)$ for any $\alpha \in A$ if $(\ast, E_\beta'') \in \partialmatching$.
    \item(iv)] $\abs{\N\setminus f_A\paren{A}},\abs{\N\setminus f_B\paren{B}}=\infty.$
\end{itemize}
 % Set $X = \mathrm{im}(f_A)\cup \mathrm{im}(f_B).$
 % We construct an isometry $\varphi$ using a bijective function $\psi \colon Y\setminus X \to Y\setminus X$ such that $\tilde{\varphi}\restriction_{X} = \varphi$ and $\tilde{\varphi}\restriction_{Y \setminus X} = \psi$.
Extend $f_A$ and $f_B$ to bijective functions from $\N$ to $\N.$ By construction, $f_A\circ p, f_B\circ q \in \mathrm{PC}(\Omega;Y)$. We have that
 \begin{align*}
\metric_{V}^1\paren{p,q}&=\metric_V^1\paren{f_A\circ p, f_B\circ q}\\
    &= \inf_{\varphi \in \mathrm{Iso}(Y)} \int_{\Omega} \delta(f_A\circ p, \varphi \circ ( f_B\circ q)) \,d\mathscr{L}^n &&\text{by Corollary~\ref{cor:iso}}\\
     &\leq  \int_{\Omega} \delta(f_A\circ p, \mathrm{Id}_Y \circ ( f_B\circ q)) \,d\mathscr{L}^n \\
    &= \frac{1}{2} \sum_{i\in Y} \mathscr{L}^n(\set{f_A\circ p = i}\triangle\set{ f_B\circ q = i})  &&\text{by Lemma~\ref{lem:l1_sym_diff}}\\
     &= \frac{1}{2}\sum_{(E,F) \in \partialmatching}\mathscr{L}^n(E\triangle F).
\end{align*}
Since this holds for all $\partialmatching\in\PM\paren{\mathcal{P},\mathcal{Q}},$ $\metric_{\mathrm{V}}^{1} \leq \metric_{\mathrm{DV}}^1$.

 %By application of Lemma \ref{lem:l1_sym_diff} and Equation \ref{eq:one-sided-iso}, it follows that $\tilde{\varphi}$ is candidate minimizer of $\metric_V^{(1)}(f_A\circ p,f_B\circ q)$ as
To show the other inequality, let $\varphi\in \mathrm{Iso}(Y).$% and let $A=\mathrm{im}(p)$ and $B=\mathrm{im}(\varphi \circ q)$. The functions partition the domain into cell structures \[\Omega = \bigsqcup_{\alpha \in A} E_{\alpha} = \bigsqcup_{\beta \in B} E_{\beta}'\] where $E_{\alpha} = p^{-1}(\alpha)$ and $E_{\beta}'' = ({\varphi \circ q})^{-1}(\beta)$.
%By Lemma \ref{lem:l1_sym_diff}
%\[\begin{aligned}\int_{\Omega}\delta(p,\varphi\circ q)\,d\mathscr{L}^n &= \frac{1}{2} \sum_{i \in Y}\mathscr{L}^n(\set{p=i}\triangle\set{(\varphi\circ q)=i}).
%\end{aligned}\] 
We construct a partial matching $\partialmatching_{\phi}\in \PM\paren{\cellstrucP,\cellstrucQ}$ as follows: 

\begin{itemize}
    \item[(i)] if $\mathscr{L}^n(\set{p=i})$ and $\mathscr{L}^n(\set{(\varphi\circ q)=i})$ are nonzero, append $(E_i',E_{\phi^{-1}\paren{i}}'') \in \partialmatching_{\varphi}$;
    \item[(ii)] if $\mathscr{L}^n(\set{p=i})$ is nonzero and $\mathscr{L}^n(\set{(\varphi\circ q)=i})=0$, append $(E_i',\ast) \in \partialmatching_{\varphi}$;
    \item[(iii)] append $(\ast, E_{\phi^{-1}\paren{i}}'') \in \partialmatching_{\varphi}$ if the opposite of (ii) is true.
\end{itemize}

Thus
\[\begin{aligned}
    \metric_{\mathrm{V}}^1(p,q) &=\inf_{\varphi\in \Iso\paren{Y}} \int_{\Omega}\delta(p,\varphi\circ q)\,d\mathscr{L}^n  &&\text{by Corollary~\ref{cor:iso}}\\
    &= \frac{1}{2} \inf_{\varphi\in \Iso\paren{Y}}\sum_{i \in Y}\mathscr{L}^n(\set{p=i}\triangle\set{\varphi\circ q=i}) &&\text{by Lemma~\ref{lem:l1_sym_diff}} \\
    &= \frac{1}{2} \inf_{\varphi\in \Iso\paren{Y}}\sum_{(E',E'') \in \partialmatching_{\varphi}}\mathscr{L}^n\paren{E' \triangle E''} \\
    &\geq \metric_{\mathrm{DV}}^1\paren{\cellstrucP,\cellstrucQ}.
\end{aligned}\] 
This concludes our proof that $\metric_{\mathrm{V}}^{1} = \metric_{\mathrm{DV}}^1$.

\textit{Proof that $\metric_{\mathrm{V}}^{2} = \metric_{\mathrm{DV}}^2$.} 
Denote by $\mathcal{C}\paren{\cellstrucP}$ and $\mathcal{C}\paren{p}$ the collections of coarsenings of a cell structure $\cellstrucP$ and a piecewise constant function $p,$ respectively. Also, let $\mathcal{S}\paren{p}$ be the cell structure defined by the level sets of a piecewise constant function $p.$ By the previous case,
$$ \metric^{2}_V(p,q)=\inf_{p'\in \mathcal{C}\paren{p},q'\in\mathcal{C}\paren{q}}\metric^{1}_V(p',q')=\inf_{p'\in \mathcal{C}\paren{p},q'\in\mathcal{C}\paren{q}}\metric_{\mathrm{DV}}^1\paren{\mathcal{S}\paren{p'},\mathcal{S}\paren{q'}}\,.$$
On the other, by definition,
$$\metric_{\mathrm{DV}}^{2}(\cellstrucP, \cellstrucQ)=\inf_{\cellstrucP'\in\mathcal{C}\paren{\cellstrucP},\cellstrucQ'\in\mathcal{C}\paren{\cellstrucQ}}\metric_{\mathrm{DV}}^1\paren{\cellstrucP',\cellstrucQ'}.$$ 
 Thus it suffices to show that if $\cellstrucP' \in \mathcal{C}(\cellstrucP)$ then there exists a $p' \in \mathcal{C}(p)$ such that $\mathcal{S}(p')$ corresponds to $\cellstrucP '$ and vice versa. We begin with a coarsened cell structure $\cellstrucP '$. Let $p \in \PC(\Omega; Y)$ be the index map defined at the beginning of the proof. We construct a coarsening $p'\in\mathcal{C}\paren{p}$ as follows. Each cell $F\in\cellstrucP'$ is union of cells of $\cellstrucP$: $F'=\sqcup_{\alpha\in A\paren{F}}E_{\alpha}'.$ Choose $\alpha\in A\paren{F}$ and set $p'\paren{F}=\alpha.$ Then $p'$ is a coarsening of $p$ whose level sets are the cells of $\cellstrucP'.$

% For each $F_i' \in \cellstrucP'$ we construct a coarsening $p' \in \mathcal{C}(p)$ as follows: \[ p'(F'_i) = \begin{cases}
%     i_1 & \text{if } E_{i}' = E_i \text{ for some $E_i \in \cellstrucP$ }\\
%     i_1   & \text{when } E_i'=E_{i_1}\sqcup E_{i_2} \sqcup \cdots \sqcup E_{i_n}
% \end{cases}.\]
%That is, $p'=p$ when the input is a single component, otherwise it returns the index of an arbitrary cell in the union. By construction $\mathcal{S}(p') =\cellstrucP '$ only differ up to a set of measure zero.  

Conversely, let $p' \in \mathcal{C}(p)$. By definition, the inverse image of its level set is the inverse image of a union of non-adjacent level-sets. As such, $p'$ partitions the domain into
\[\Omega = \bigsqcup_{\alpha \in A}\Omega_{\alpha}^{p'}\] where $\Omega_{\alpha}^{p'} = (p')^{-1}(\alpha) = \bigcup_{i\in I} p^{-1}(\set{\alpha_i})$ for some $I \subset A$. Since each cell $\Omega_{\alpha}^{p'}$ is a union of non-adjacent cells of $\cellstrucP$, it follows that $\mathcal{S}(p') \in \mathcal{C}(\cellstrucP).$
\end{proof}

\begin{prop}\label{prop:min}
Let $\Omega$ be a open, connected, bounded subset of $\R^n$ and let $\cellstrucP$ and $\cellstrucQ$ be cell structures of $\Omega.$  
The minima in Definitions~\ref{defn:continuous} and Definition~\ref{defn:discrete} are realized.
\end{prop}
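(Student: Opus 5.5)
The plan is to prove existence on the discrete side first and then transfer it to the continuous side through the constructions in the proof of Theorem~\ref{thm:main}.

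\emph{Discrete one-to-one minimum.} For a partial matching $\partialmatching$ write
\[
c(\partialmatching)=\tfrac12\sum_{(E,F)\in\partialmatching}\mathscr{L}^n(E\triangle F).
\]
Since $\mathscr{L}^n(E\triangle F)=\abs{E}+\abs{F}-2\abs{E\cap F}$, we get
\[
c(\partialmatching)=\abs{\Omega}-\sum_{(E,F)\in\partialmatching,\,E,F\neq\ast}\abs{E\cap F}.
\]
So minimizing $c$ amounts to maximizing the matched overlap $\sum\abs{E\cap F}$ over partial matchings. When $\cellstrucP$ and $\cellstrucQ$ are finite this is a maximum over a finite set, so it is attained. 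In the countable case I would use a compactness argument.

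Encode a matching as an element of $\{0,1\}^{\cellstrucP\times\cellstrucQ}$, with each row and column sum at most $1$; this set is compact in the product topology. The overlap functional is upper semicontinuous there, and in fact continuous, for the following reason. Since $\sum_{E,F}\abs{E\cap F}=\abs{\Omega}<\infty$, the tails $\sum_{E\notin\cellstrucP_k\text{ or }F\notin\cellstrucQ_k}\abs{E\cap F}$ go to $0$ for finite exhausting subfamilies $\cellstrucP_k,\cellstrucQ_k$. The functional is therefore a uniform limit of continuous functions depending on finitely many coordinates, hence continuous. Unmatched cells are paired with $\ast$, so every element of the compact set is a genuine partial matching, and a maximizer exists.

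\emph{Discrete many-to-many minimum.} This is the step I expect to be the main obstacle, because coarsenings of a countable structure form an infinite family. I would combine the coarsening and the matching into one compact object. An independent matching is determined by a relation $R\subset\cellstrucP\times\cellstrucQ$ whose connected components, viewed in the bipartite graph, give the coarse cells. Each component must have its $\cellstrucP$-side pairwise non-adjacent and its $\cellstrucQ$-side pairwise non-adjacent. Merged cells that are matched to nothing can be split off without increasing the cost, so we may assume unmatched coarse cells are single cells.

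The cost is again $\abs{\Omega}$ minus the matched overlap $\sum_{\text{matched }(E',F')}\abs{E'\cap F'}$. Merging cells only adds overlaps, so the cost equals $\abs{\Omega}-\sum_{(E,F)\text{ in the same component}}\abs{E\cap F}$. The constraints "same component" and "non-adjacency within a component" are closed conditions on an equivalence-relation encoding in $\{0,1\}^{(\cellstrucP\sqcup\cellstrucQ)^2}$: transitivity, symmetry, and forbidding adjacent pairs each involve finitely many coordinates. The objective is continuous by the same tail argument as before. Compactness then gives a minimizer. Transitivity is closed, but connectedness of a component through $R$ is not needed, since any equivalence relation satisfying the constraints yields coarsenings together with a matching.

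\emph{Continuous minima.} For $\metric^1_{\mathrm V}$, take an optimal $\partialmatching$ and the relabelings $f_A,f_B$ from the proof of Theorem~\ref{thm:main}. The step filling $w^\ast$ of Lemma~\ref{lem:step-function} between $f_A\circ p$ and $f_B\circ q$ is a weak type-1 PC filling. By equality~\eqref{eq:main-equality} and Lemma~\ref{lem:step-function}(c), its energy is $\int\delta(f_A\circ p,f_B\circ q)=c(\partialmatching)=\metric^1_{\mathrm{DV}}=\metric^1_{\mathrm V}$. Via Proposition~\ref{prop:VEFunction}, its level sets give a cellular filling $\cellstrucW\in\Fill^1(\cellstrucP,\cellstrucQ)$ achieving the infimum. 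Geometrically, $\cellstrucW$ consists of the cylinders $E\times(0,1/2]$ and $F\times(1/2,1)$, glued for matched pairs.

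For $\metric^2_{\mathrm V}$, do the same with the optimal coarsenings $\cellstrucP',\cellstrucQ'$ and their matching. The resulting filling is in $\Fill^2$, and its volume exchange equals $\metric^2_{\mathrm{DV}}=\metric^2_{\mathrm V}$ by Theorem~\ref{thm:main}.

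One point to check is that these stacked fillings have finite perimeter. This follows because $\cellstrucP$ and $\cellstrucQ$ are Caccioppoli partitions and the interface at $t=1/2$ has area at most $\abs{\Omega}$.
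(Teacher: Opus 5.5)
Your proposal is correct. The discrete one-to-one step is essentially the paper's argument. The paper extracts a diagonal subsequence of the indicators $c_{ijk}$, which amounts to sequential compactness of $\{0,1\}^{A\times B}$, and controls the objective with the tail estimate \eqref{eq:finite_approx}. Working with the overlap form $\abs{\Omega}-\sum\abs{E\cap F}$ makes continuity immediate. Your remark that the row and column constraints are closed also supplies a check the paper leaves implicit: that the limit $\partialmatching^\ast$ is a genuine partial matching.

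The many-to-many step is where you genuinely differ. The paper takes limits of the coarsenings $\cellstrucP^k,\cellstrucQ^k$ separately and does not track the matchings. It then proves $\metric^1_{\mathrm{DV}}(\cellstrucP^k,\cellstrucP^\infty)\le 2\sum_{i>N}\mathscr{L}^n(E_i)$ and passes to the limit via the triangle inequality for $\metric^1_{\mathrm{DV}}$ (Proposition~\ref{prop:dissimilarity}, proved only afterwards). Only then does it invoke one-to-one existence on the pair $(\cellstrucP^\infty,\cellstrucQ^\infty)$.

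You instead fold coarsening and matching into a single equivalence relation on $\cellstrucP\sqcup\cellstrucQ$, so one compactness argument with a continuous objective finishes the proof. This makes the one-to-one case a literal special case: classes contain at most one cell from each side, which is again a closed condition. It also removes the dependence on the metric structure. The paper's route has the merit of staying close to the definition $\metric^2_{\mathrm{DV}}=\inf\metric^1_{\mathrm{DV}}(\cellstrucP',\cellstrucQ')$ and of reusing the first part.

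For the continuous minima, the paper only asserts a reduction to the discrete case via Theorem~\ref{thm:main} and the correspondence in Proposition~\ref{prop:VEFunction}. Your explicit stacked filling, built from $E\times(0,1/2]$ and $F\times(1/2,1)$, is exactly what that reduction requires.

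Two small points to tighten. First, the claim that any admissible equivalence relation yields coarsenings needs the coarse partitions to be Caccioppoli. This follows from countable subadditivity, $P(\bigcup_j E_j;\Omega)\le\sum_j P(E_j;\Omega)$, which gives $\sum_{E'\in\cellstrucP'}P(E';\Omega)\le\sum_{E\in\cellstrucP}P(E;\Omega)<\infty$; the paper states this explicitly for $\cellstrucP^\infty$. Second, $w^\ast$ is a PC filling between $f_A\circ p$ and $f_B\circ q$, not between $p$ and $q$. To match Definition~\ref{def:weak_filling} literally, use $f_A^{-1}\circ w^\ast$, which fills between $p$ and $(f_A^{-1}\circ f_B)\circ q$ and has the same level sets and energy.
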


\begin{proof}
By Theorem~\ref{thm:main} and Proposition~\ref{prop:minfill} it suffices to show that the minima in Definition~\ref{defn:discrete} are realized.
We begin by considering $\metric_{DV}^1.$ Suppose we partition $\Omega$ into $\cellstrucP = \{E_i\}_{i \in A}$ and $\cellstrucQ = \{F_j\}_{j \in B}$, where $A$ and $B$ are both countably infinite subsets of $\mathbb{N}.$ The argument is similar when one of $A$ or $B$ is finite. 
Pick a minimizing sequence $\{\partialmatching_k\}_{k \in \mathbb{N}} \subset \PM(\cellstrucP, \cellstrucQ)$ of 
\[
L(\partialmatching) \coloneq \sum_{(E, F) \in \partialmatching}\mathscr{L}^n(E\triangle F)
\]
over all $\partialmatching \in \PM(\cellstrucP, \cellstrucQ)$.
Before we construct a candidate $\partialmatching^\ast \in \PM\paren{\cellstrucP, \cellstrucQ}$ and show it obtains the infimum, we first note that for any $\partialmatching \in \PM\paren{\cellstrucP, \cellstrucQ}$,
\begin{equation}\label{eq:finite_approx}
|L(\partialmatching) - L_N(\partialmatching)| \leq \sum_{i > N} \mathscr{L}^n(E_i) + \sum_{j > N} \mathscr{L}^n(F_j) \longrightarrow 0 \quad \text{ as } N\to \infty,
\end{equation}
where 
\[
L_N(\partialmatching) \coloneq \sum_{\substack{(E_i, F_j) \in \partialmatching\\ i,j \leq N}} \mathscr{L}^n(E_i\triangle F_j) + \sum_{\substack{i \leq N\\ \not\exists j\leq N: \paren{E_i,F_j}\in  \partialmatching}} \mathscr{L}^n(E_i) + \sum_{\substack{j \leq N\\  \not\exists i\leq N: \paren{E_i,F_j}\in  \partialmatching}} \mathscr{L}^n(F_j).
\]

For any $(i,j,k) \in A\times B\times \mathbb{N}$, we define
\[
c_{ijk} = 
\begin{cases}
1 & \text{ if } (i,j) \in \partialmatching_k\\
0 & \text{ otherwise}
\end{cases}
\]
to encode whether $E_i$ and $F_j$ were matched at step $k$ of the minimizing sequence.  
Enumerate $A\times B$ with $\{(i_m, j_m)\}_{m = 1}^\infty$.
Since $c_{i_1, j_1, k} \in \{0, 1\}$, we can pick an infinite subset $N_1$ of $\mathbb{N}$ such that $c_{i_1, j_1, k}$ is constant for $k \in N_1$; denote this constant by $c_1$ and pick any such $k_1 \in N_1$.  
Similarly, there is an infinite subset $N_2$ of $N_1$ such that $c_2 \coloneq c_{i_2, j_2, k}$ is constant for $k \in N_2$; pick a $k_2 \in N_2$ such that $k_2 > k_1$. 
We repeat this process for all $m \in \{1, 2, \dots\}$ to obtain $\{c_m\}$ and $k_1 < k_2 < \cdots$.

We define $\partialmatching^\ast$ with the following procedure. 
In the first step, we let $\partialmatching^\ast$ contain all the pairs $(i_m, j_m) \in A\times B$ for which $c_{m} = 1$.
Then, if $i' \in A$ is not present in the first entry of any of the pairs just added, we add $(i', \ast)$ to $\partialmatching^\ast$. 
Similarly, if $j' \in B$ is not present in the second entry of any of the any pair added in the first step, we add $(\ast, j')$ to $\partialmatching^\ast$. 

Let $\varepsilon > 0$. 
By (\ref{eq:finite_approx}) we can choose $N$ large enough so that 
\begin{align*}
|L(\partialmatching^\ast) - L(\partialmatching_{k_m})| &\leq |L(\partialmatching^\ast) - L_N(\partialmatching^\ast)| + |L_N(\partialmatching^\ast) - L_N(\partialmatching_{k_m})| + |L_N(\partialmatching_{k_m}) - L(\partialmatching_{k_m})|\\
&\leq \varepsilon + |L_N(\partialmatching^\ast) - L_N(\partialmatching_{k_m})|.
\end{align*}
Because the first summation used to define $L_N$ is a sum over (at most) $N^2$ pairs in $\partialmatching$, for large enough $m$, the three families we sum over for  $L_N(\partialmatching^\ast)$ are the same three families we use for $L_N(\partialmatching_{k_m})$. 
Letting $\varepsilon \to 0$ concludes this portion of the proof.

% We now consider the $\metric_{DV}^2.$ Partition $\Omega$ into $\cellstrucP = \{E_i\}_{i \in \N}$ and $\cellstrucQ = \{F_j\}_{j \in \N}.$ We establish notation related to an independent matching $\partialmatching\in \IM\paren{\cellstrucP,\cellstrucQ}.$ Recall that $\partialmatching$ is a partial matching between coarsenings of $\cellstrucP'$ of $\cellstrucP$ and $\cellstrucQ'$ of $\cellstrucQ.$ For $\paren{i,j}\in\N\times \N,$ let $E$ be the cell of $\cellstrucP'$ so that $E_i\subset E$ and let $F$ be the cell of $\cellstrucQ'$ so that $F_j\subset F.$ We abuse notation and write $\paren{i,j}\in\partialmatching$ if $\paren{E,F}\in\partialmatching.$ Observe that $\partialmatching$ is determined by the pairs of indices that satisfy this relation. As in Theorem~\ref{thm:dual-equality} below, we have that
% $$L\paren{\partialmatching}\coloneq \frac{1}{2}\sum_{(E, F) \in \partialmatching} \mathscr{L}^n(E_i\triangle F_j)=\mathscr{L}^n(\Omega)-\sum_{\paren{i,j}\in\partialmatching}\mathscr{L}^n(E_i\cap F_j)\,.$$
% The remainder of argument continues mutatis mutandis as the one for $\metric_{DV}^1.$ 

We now consider the $\metric_{\mathrm{DV}}^2.$ 
Pick a sequence of coarsenings $\{\cellstrucP^k\}_{k = 1}^\infty$ of $\cellstrucP$ and coarsenings $\{\cellstrucQ^k\}_{k = 1}^\infty$ of $\cellstrucQ$ so that 
\[
\metric_{DV}^1(\cellstrucP^k, \cellstrucQ^k) \longrightarrow \metric_{DV}^2(\cellstrucP, \cellstrucQ) \quad \text{ as } k \to \infty.
\]
We will show that there exist ``limit'' coarsenings of $\{\cellstrucP^k\}$ and $\{\cellstrucQ^k\}$ denoted by $\cellstrucP^\infty$ and $\cellstrucQ^\infty$, respectively, which satisfy
\[
\metric_{DV}^1(\cellstrucP^\infty, \cellstrucQ^\infty) = \metric_{DV}^2(\cellstrucP, \cellstrucQ). 
\]

We first define $\cellstrucP^\infty$. 
Any coarsening $\cellstrucP^k$ defines an equivalence relation $\sim_k$ on $A$, where $i \sim_k j$ if there is a cell $E \in \cellstrucP^k$ so that $(E_i \cup E_j) \subset E$. 
For any $(i, j, k) \in A\times A\times \mathbb{N}$, we define
\[
c^\cellstrucP_{ijk} \coloneq
\begin{cases}
1 & \text{ if } i \sim_k j\\
0 & \text{ otherwise }
\end{cases}.
\]
Enumerating $A\times A$ with $\set{(i_m, j_m)}$ we use a similar diagonalization argument as in the case of $\metric_{DV}^1$ to obtain a sequence $\{k_m\}$ such that  $c_m^\cellstrucP \coloneq c^\cellstrucP_{i_m j_m k_l}$ is a constant function of $l \geq m$.
This allows us to define the equivalence relation $\sim$ where $i_m \sim j_m$ if $c_m^\cellstrucP = 1$, and hence, the coarsening 
\[
\cellstrucP^\infty \coloneq \Big\{E_{[i]} \coloneq \bigcup_{j \in [i]} E_j : [i] \in A/{\sim}\Big\}
\]
of $\cellstrucP$. 
Apply the same diagonal argument for $\cellstrucQ$ along the subsequence already selected for $\cellstrucP$.
Notice that adjacent cells in $\cellstrucP$ (and $\cellstrucP$) cannot belong to the same limiting class, otherwise, they would belong to the same class at a sufficiently large $k$, which is a contradiction. 
By subadditivity and lower semicontinuity of perimeter $P$, $\cellstrucP^\infty$ and $\cellstrucQ^\infty$ are Caccioppoli partitions of $\Omega$. 

Now consider the first $N$ cells $\{E_1, \dots, E_N\}$ of $\cellstrucP$.
Because for all sufficiently large $k\in\set{k_m}$, we have that $i \sim_k j$ if and only if $i \sim j$ (for $i,j \in \{1, \dots, N\}$), we have a cost zero matching between 
\[
\cellstrucP^k|_{1, \dots,N} \coloneq \Big\{E_{[i]} : [i] \in \{1, \dots, N\}/{\sim_k}\Big\}
\]
and $\cellstrucP^\infty|_{1, \dots, N}$ (similarly defined). 
For such $k$, we get
\[
\metric_{DV}^1(\cellstrucP^k, \cellstrucP^\infty) \leq 2\,\sum_{i > N} \mathscr{L}^n(E_i) \longrightarrow 0 \quad \text{ as } N\to \infty.
\]
A similar argument also gives us $\metric_{DV}^1(\cellstrucQ^k, \cellstrucQ^\infty) \longrightarrow 0$ as $N\to \infty$.
Because $\metric_{DV}^1$ satisfies the triangle inequality, 
\[
\metric_{DV}^1(\cellstrucP^\infty, \cellstrucQ^\infty) \leq \metric_{DV}^1(\cellstrucP^\infty, \cellstrucP^k) + \metric_{DV}^1(\cellstrucP^k, \cellstrucQ^k) + \metric_{DV}^1(\cellstrucQ^k, \cellstrucQ^\infty). 
\]
Recalling that $\lim_{k\to \infty}\metric_{DV}^1(\cellstrucP^k, \cellstrucQ^k) = \metric_{DV}^2(\cellstrucP, \cellstrucQ)$, letting $k\to \infty$ in the inequality above gives us that 
\[
\metric_{DV}^1(\cellstrucP^\infty, \cellstrucQ^\infty) \leq \metric_{DV}^2(\cellstrucP, \cellstrucQ).
\]
Now, since $\cellstrucP^\infty$ and $\cellstrucQ^\infty$ are coarsenings of $\cellstrucP$ and $\cellstrucQ$, by definition, $\metric_{DV}^2(\cellstrucP, \cellstrucQ) \leq \metric_{DV}^1(\cellstrucP^\infty, \cellstrucQ^\infty)$. 
By our previous result for the existence result for $\metric_{DV}^1$, there exists an optimal partial matching between $\cellstrucP^\infty$ and $\cellstrucQ^\infty$. 
This concludes the proof.
\end{proof}

\subsection{Metric Properties}\label{subsec:metric}

Recall that if $\metric$ is non-negative, symmetric, and satisfies $\metric\paren{M,M}=0$ for all $M$ then it is called a dissimilarity.
\begin{prop}
    $\metric_{\mathrm{V}}^1$ is a metric and $\metric_{\mathrm{V}}^2$ is a dissimilarity but does not satisfy the triangle inequality. % on $\mathrm{PC}(\Omega; \mathbb{R}^m).$ 
    \label{prop:dissimilarity}
\end{prop}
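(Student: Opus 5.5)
By Theorem~\ref{thm:main} it suffices to work with the discrete quantities $\metric^1_{\mathrm{DV}}$ and $\metric^2_{\mathrm{DV}}$. Both are minima of sums of measures, so non-negativity is immediate. Symmetry follows from $\PM(\cellstrucP,\cellstrucQ)=\PM(\cellstrucQ,\cellstrucP)$ (and likewise for $\IM$), since $E\triangle F=F\triangle E$. For $\metric(\cellstrucP,\cellstrucP)=0$, take the identity matching $\{(E,E):E\in\cellstrucP\}$; it is also an independent matching, with the trivial coarsening.

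For $\metric^1_{\mathrm{V}}$ it remains to prove definiteness and the triangle inequality. For definiteness, suppose $\metric^1_{\mathrm{DV}}(\cellstrucP,\cellstrucQ)=0$. By Proposition~\ref{prop:min} a minimizing matching exists and every term $\mathscr{L}^n(E\triangle F)$ vanishes. Cells matched to $\ast$ would contribute their positive measure, so no cell is unmatched, and matched cells agree up to null sets. Hence $\cellstrucP=\cellstrucQ$ under the paper's identification.

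For the triangle inequality I will use the functional form. Label all three structures in the discrete metric space $Y=\N$, leaving infinitely many unused labels. By Corollary~\ref{cor:iso},
\[
\metric^1_{\mathrm{V}}(p,q)=\inf_{\psi\in\Iso(Y)}\int_\Omega\delta(p,\psi\circ q)\,d\mathscr{L}^n,
\]
where isometries are simply bijections of $\N$. Given near-optimal $\psi_1$ for $(p,q)$ and $\psi_2$ for $q$ against $r$, I expect the composition to do the job. From the pointwise bound
\[
\delta(p,\psi_1\circ\psi_2\circ r)\le\delta(p,\psi_1\circ q)+\delta(\psi_1\circ q,\psi_1\circ\psi_2\circ r),
\]
the invariance $\delta(\psi_1 a,\psi_1 b)=\delta(a,b)$, and then integrating and taking infima, one gets $\metric^1_{\mathrm{V}}(p,r)\le\metric^1_{\mathrm{V}}(p,q)+\metric^1_{\mathrm{V}}(q,r)$. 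Proposition~\ref{prop:VEFunction} transfers this back to cell structures. The only subtle point is that the labeling must leave infinitely many unused labels, so that unmatched cells can be sent to fresh labels; this is the reason for the condition $\abs{\N\setminus A}=\infty$.

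The main obstacle is the counterexample showing that $\metric^2_{\mathrm{V}}$ fails the triangle inequality. My plan uses three structures. Let $\cellstrucP$ have two non-adjacent cells $A,B$ separated by a third cell $C$, so that $\cellstrucP$ coarsens to $\{A\cup B, C\}$. Let $\cellstrucQ=\{A\cup B,C\}$ itself, so $\metric^2(\cellstrucP,\cellstrucQ)=0$. Let $\cellstrucR$ be a structure in which the region $A\cup B$ must be split, for example by a cell $D$ that meets $A$ and $B$ but is cut differently, so that $\metric^2(\cellstrucQ,\cellstrucR)=0$ via a coarsening of $\cellstrucR$. The construction must nevertheless make any coarsening matching between $\cellstrucP$ and $\cellstrucR$ cost positive volume. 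A simpler route is to exploit zero-distance non-transitivity: find $\cellstrucP\neq\cellstrucR$ with $\metric^2(\cellstrucP,\cellstrucQ)=\metric^2(\cellstrucQ,\cellstrucR)=0$ but $\metric^2(\cellstrucP,\cellstrucR)>0$. Concretely, on an interval $\Omega=(0,3)$, take $\cellstrucP=\{(0,1),(1,2),(2,3)\}$, $\cellstrucQ=\{(0,1)\cup(2,3),(1,2)\}$, and $\cellstrucR=\{(0,1)\cup(2,3)\text{-type}\}$ variants, adjusting until the coarsenings of $\cellstrucP$ and of $\cellstrucR$ cannot meet; in higher dimensions, a stripe pattern behaves the same way. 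I would verify positivity of $\metric^2(\cellstrucP,\cellstrucR)$ by enumerating the finitely many coarsenings.
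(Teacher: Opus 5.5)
Your treatment of the metric half is correct. Non-negativity, symmetry, $\metric(\cellstrucP,\cellstrucP)=0$ and definiteness (via the attained minimum of Proposition~\ref{prop:min}) are argued as in the paper. For the triangle inequality you take a different route. The paper composes two optimal partial matchings into a third via triples $(i,j,k)$ and bounds $|M_i\triangle O_k|\le|M_i\triangle N_j|+|N_j\triangle O_k|$ termwise. You instead compose near-optimal bijections of $\N$ in the formula of Corollary~\ref{cor:iso}. Your argument is valid and shorter: it is just the $L^1$ triangle inequality for the discrete metric plus the fact that $\Iso(Y)$ is a group. The trade-off is that it leans on Corollary~\ref{cor:iso} and Proposition~\ref{prop:VEFunction}, with labelings leaving infinitely many labels free. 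The paper's argument is purely combinatorial on $\metric^1_{\mathrm{DV}}$.

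The genuine gap is the second claim. You never exhibit three cell structures violating the triangle inequality for $\metric^2_{\mathrm{V}}$, and the concrete attempt you sketch cannot be completed. On $(0,3)$ with $\cellstrucQ=\{(0,1)\cup(2,3),(1,2)\}$, the two cells of $\cellstrucQ$ are adjacent, so its only coarsening is itself. In one dimension a finite-perimeter cell is a finite union of intervals, so an interval cannot be a union of two or more pairwise non-adjacent cells, because consecutive pieces share an endpoint. Hence the only $\mathcal{R}$ with $\metric^2_{\mathrm{V}}(\cellstrucQ,\mathcal{R})=0$ are $\cellstrucQ$ and your $\cellstrucP$, and both are at distance $0$ from $\cellstrucP$. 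No ``adjusting'' of $\mathcal{R}$ helps.

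Your zero-distance non-transitivity idea does work once two different merges share a cell and thereby create an adjacency. On $(0,4)$ let
$\cellstrucQ=\{(0,1),(1,2),(2,3),(3,4)\}$,
$\cellstrucP=\{(0,1)\cup(2,3),(1,2),(3,4)\}$, and
$\mathcal{R}=\{(0,1)\cup(3,4),(1,2),(2,3)\}$.
Both $\cellstrucP$ and $\mathcal{R}$ are coarsenings of $\cellstrucQ$, so $\metric^2_{\mathrm{V}}(\cellstrucP,\cellstrucQ)=\metric^2_{\mathrm{V}}(\cellstrucQ,\mathcal{R})=0$. The cells of $\mathcal{R}$ are pairwise adjacent, so $\mathcal{R}$ admits no nontrivial coarsening. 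The coarsenings of $\cellstrucP$ are $\cellstrucP$ and $\{(0,1)\cup(2,3),(1,2)\cup(3,4)\}$, neither of which equals $\mathcal{R}$. A direct computation over these finitely many options gives $\metric^2_{\mathrm{V}}(\cellstrucP,\mathcal{R})=1$.

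The paper instead uses a quantitative example in the unit-area disk. Three mutually adjacent thirds $A$ are compared with a single cell $C$, giving distance $2/3$ since $A$ admits no nontrivial coarsening. The intermediate structure $B$ has a thin Y-shaped cell $g_7$ separating the three sectors, which makes them pairwise non-adjacent. Then both $\metric^2_{\mathrm{V}}(A,B)$ and $\metric^2_{\mathrm{V}}(B,C)$ are of order $|g_7|$, the latter by coarsening $B$ to $\{g_4\cup g_5\cup g_6,g_7\}$.
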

\begin{proof}

\begin{figure}
    \centering
\begin{tikzpicture}[
    boundary/.style = {line width=1.2pt, line join=round, line cap=round},
    pointer/.style  = {line width=0.7pt, -{Stealth[length=5pt, width=3pt]}},
    % grains : pic
    %   Draws a disk cut into three grains by a Y (branches up, lower left and
    %   lower right). A positive tube width pulls the grains apart so that a
    %   thin Y-shaped fourth grain, the tube, appears between them.
    %   Parameters
    %   ----------
    %   #1 : number (cm)
    %       Width of the inner tube; 0 gives a plain triple junction.
    %   Returns
    %   -------
    %   None (draws the disk centered at the pic position).
    pics/grains/.style = {code = {
        \pgfmathsetmacro{\halfw}{#1/2}
        \pgfmathsetmacro{\rimoff}{asin(\halfw/\diskR)} % tube half-width at rim (deg)
        \foreach \angA/\angB in {90/210, 210/330, 330/450} {
            \pgfmathsetmacro{\bis}{(\angA+\angB)/2}                  % grain bisector
            \pgfmathsetmacro{\rcorner}{\halfw/sin((\angB-\angA)/2)} % inner corner
            \draw[boundary]
                (\bis:\rcorner) -- ({\angA+\rimoff}:\diskR)
                arc[start angle={\angA+\rimoff}, end angle={\angB-\rimoff},
                    radius=\diskR] -- cycle;
        }
        \draw[boundary] (0,0) circle[radius=\diskR];
    }},
]
    % ---- layout (cm) ----
    \def\diskR{1.5}     % disk radius
    \def\tubeW{0.20}    % tube width in B: thin, so its volume is close to 0
    \def\spacing{3.97}  % center-to-center distance (same ratio to size as DVE)
    \coordinate (A) at (0, 0);
    \coordinate (B) at (\spacing, 0);
    \coordinate (C) at ({2*\spacing}, 0);
 
    \pic at (A) {grains=0};
    \pic at (B) {grains=\tubeW};
    \draw[boundary] (C) circle[radius=\diskR];
 
    % ---- cell labels g_1 .. g_8: upper left, upper right, bottom of each Y ----
    \pgfmathsetmacro{\labR}{0.55*\diskR} % centroid distance of a 120-degree sector
    \foreach \disk/\numL/\numR/\numB in {A/1/2/3, B/4/5/6}
        \path (\disk) ++(150:\labR) node {$g_{\numL}$}
              (\disk) ++( 30:\labR) node {$g_{\numR}$}
              (\disk) ++(270:\labR) node {$g_{\numB}$};
    \node at (C) {$g_8$};
 
    % ---- cell g_7, the tube: label outside, arrow along its lower-right arm ----
    \def\arrowOut{0.55} % arrow length outside the rim
    \def\arrowIn{0.35}  % how far the tip reaches into the tube
    \path (B) ++(330:{\diskR+\arrowOut}) coordinate (tail7);
    \draw[pointer] (tail7) node[anchor=150, inner sep=1pt] {$g_7$}
        -- ++(150:{\arrowOut+\arrowIn});
 
    % Optional disk labels: uncomment to put A, B, C under the disks.
    % \foreach \disk in {A, B, C}
    %     \path (\disk) ++(0,-\diskR) node[below=2pt] {$\disk$};
\end{tikzpicture}
    \caption{Failure of the triangle inequality for $\metric_{V}^2.$}
    \label{fig:many-to-many-failure}
\end{figure}

Let $\mathcal{M}$ and $\mathcal{N}$ be cell structures and $j\in \set{1,2}.$ It is obvious from either the discrete or continuous definitions that 
$$\metric_{\mathrm{V}}^j(\mathcal{M},\mathcal{N})=\metric_{\mathrm{V}}^j(\mathcal{N},\mathcal{M})\geq 0\,.$$ 
If $\metric_{\mathrm{V}}^1(M,N)=0$ then the symmetric difference of every pair of cells in an optimal matching has zero volume. That is, $M$ and $N$ differ up to a rearrangement of a set of measure zero and are equal as partitions of $\Omega.$

To show that $\metric_{\mathrm{DV}}^1$ satisfies the triangle inequality let $\mathcal{M}=\set{M_i}_{i\in I},\mathcal{N}=\set{N_j}_{j\in J},$ and $\mathcal{O}=\set{O_k}_{k\in K}$ be cell structures in $\Omega$ and let $R$ and $S$ be optimal partial matchings between the cells of $\mathcal{M}$ and $\mathcal{N}$ and those of $\mathcal{N}$ and $\mathcal{O},$ respectively. Define 
$$\hat{T}\subset \paren{I\cup \set{\ast}}\times \paren{J\cup \set{\ast}} \times \paren{K\cup \set{\ast}}$$
to be the set of triplets of the following three types
\begin{itemize}
    \item $\paren{i,j,k}$ if $\paren{M_i,N_j}\in R,$ $\paren{N_j,O_k}\in S,$ $N_j\neq \ast,$ and $\paren{M_i,O_k}\neq \paren{\ast,\ast},$ 
    \item $\paren{i,\ast,\ast}$ if $\paren{M_i,\ast}\in R,$
    \item and $\paren{\ast,\ast,k}$ if $\paren{\ast,N_k}\in S.$
\end{itemize}
As 
$$T=\set{\paren{M_i,O_k}:\exists j \text{ s.t.} \paren{i,j,k}\in\hat{T}}$$
is a partial matching between $M$ and $O,$ it follows that 
\begin{align*}
\metric_{\mathrm{DV}}^1(\mathcal{M},\mathcal{O}) & \leq \frac{1}{2}\sum_{\paren{i,j,k}\in \hat{T}}|M_i \triangle O_k |\\
&\leq \frac{1}{2}\sum_{\paren{i,j,k}\in \hat{T}}|M_i \triangle N_j |+|N_j \triangle O_k |\\
&=\frac{1}{2}\sum_{\paren{i,j}\in R: j\notin \hat{J}} |M_i \triangle N_j | + \frac{1}{2}\sum_{\paren{j,k}\in S: j\notin \hat{J}} |N_j \triangle O_k |\\
&\leq \metric_{\mathrm{DV}}^1(\mathcal{M},\mathcal{N}) + \metric_{\mathrm{DV}}^1(\mathcal{N},\mathcal{O}),
\end{align*}
where $\hat{J}=\set{j:\paren{\ast,j}\in R, \paren{j,\ast}\in S}.$

\textbf{Failure of triangle inequality for $\metric_{\mathrm{V}}^2$.}
Let $\Omega = \mathbf{D}^2$ be the open disk of area one in two dimensions. Consider the following three cell structures. Let $A=\set{g_1,g_2,g_3}$ be a partition of the disk into equal thirds, $B=\set{g_4, g_5, g_6, g_7}$ be similar to $A$ with an inclusion $g_7$ such that $g_4, g_5, g_6$ are nonadjacent and $0<|g_7|<\frac{1}{3}$, and $C = \set{g_8}$. A visualization of the cell structures can be found in Figure \ref{fig:many-to-many-failure}. Since every cell in $A$ is adjacent to one another, $\metric_{\mathrm{V}}^2(A,C)=2/3$. However, both $\metric_{\mathrm{V}}^2(A,B)=|g_7|$ and $\metric_{\mathrm{V}}^2(B,C)$ since $\set{g_4,g_5,g_6} \mapsto \set{g_8}$. By construction $\metric_{\mathrm{V}}^2(A,C) \nleq \metric_{\mathrm{V}}^2(A,B) + \metric_{\mathrm{V}}^2(B,C)$.

%By the pseudometric properties of $\metric_{\mathrm{V}}^1$, it suffices to check that $\metric_{\mathrm{V}}^1(p,q)=0$ if and only if $p=q$. Let $p,q \in \mathrm{PC}(\Omega; \mathbb{R}^m)$. Suppose that $p=q$ and pick $\mathrm{Id} \in G$ to be the identity map. Then trivially $\|p-\mathrm{Id}(q)\|_{L^1}=0$. Conversely, if $\metric_{\mathrm{V}}^1(p,q)=0$  
\end{proof}

A notion relevant to our computational experiments of \S\ref{sec:comp_exp} is that of a conditionally negative definite metric.
A symmetric function $\psi\colon Z\times Z \to \mathbb{R}$ is called \textit{\textbf{conditionally negative definite} if for any finite subset $\{z_1, \dots, z_m\}$ of $Z$, the matrix $D = (\psi\paren{z_i, z_j})_{i,j = 1, \dots, m}$ is conditionally negative definite, i.e., for any vector $\boldsymbol{\alpha}^T = (\alpha_1, \dots, \alpha_m) \in \mathbb{R}^m$ such that $\sum \alpha_i = 0$, we have that $\boldsymbol{\alpha}^T D\boldsymbol{\alpha} \leq 0$.}
The next proposition shows that even for cellular structures within intervals of $\mathbb{R}$, the volume exchange metric is \emph{not} conditionally negative definite.
\begin{prop}
If $\Omega$ is an open interval in $\mathbb{R}$, then $\metric_{\mathrm{V}}^1$ is not conditionally negative definite.
\label{prop:negative_definite}
\end{prop}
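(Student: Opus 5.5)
The plan is to reduce to a finite computation. By Theorem~\ref{thm:main}, $\metric_{\mathrm{V}}^1=\metric_{\mathrm{DV}}^1$, so every distance can be evaluated by minimizing over partial matchings. On an interval, cell structures with finitely many cells are just finite sequences of subintervals, and such a distance is a small explicit optimization. It therefore suffices to exhibit finitely many cell structures $\cellstrucP_1,\dots,\cellstrucP_m$ of $\Omega$ and a vector $\boldsymbol{\alpha}$ with $\sum\alpha_i=0$ and $\boldsymbol{\alpha}^TD\boldsymbol{\alpha}>0$, where $D_{ij}=\metric_{\mathrm{DV}}^1(\cellstrucP_i,\cellstrucP_j)$.

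For the target configuration I would use the graph metric of the complete bipartite graph $K_{2,3}$, which is not conditionally negative definite. Give the two ``centers'' weight $x$ and the three ``leaves'' weight $y$, with $2x+3y=0$; take $x=3$, $y=-2$. Center--center distance is $2$, leaf--leaf distance is $2$, and center--leaf distance is $1$. Summing over ordered pairs gives
\[
\boldsymbol{\alpha}^TD\boldsymbol{\alpha}=4x^2+12y^2+12xy=36+48-72=12>0.
\]
This inequality is strict, so any metric whose ratios are within a small relative error of the $K_{2,3}$ pattern also fails to be conditionally negative definite. Consequently I only need cell structures whose $\metric_{\mathrm{DV}}^1$-distances are approximately proportional to the $K_{2,3}$ distances.

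The key step is realizing this pattern on an interval. One guiding computation: for a two-cell structure with cut point $a$ and the trivial one-cell structure, the distance is $\min(a,1-a)$. Two-cell structures alone therefore give essentially an $L^1$ (hence conditionally negative definite) geometry. One must exploit the label-forgetting nature of matchings, together with structures having three or more cells.

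I would first try splitting $\Omega$ into six equal blocks, with the following choices.
\begin{itemize}
\item The two centers are structures whose cells are unions of blocks. They should be arranged so that each leaf can be reached from either center by moving one block's worth of volume, while the two centers are far from each other.
\item The three leaves are rearrangements that are pairwise incompatible, so that no matching between two leaves is cheaper than going through a center.
\end{itemize}
I would compute each pairwise distance by enumerating the few sensible matchings, giving lower bounds by the observation that each unit of block volume lying in mismatched cells costs at least half a block. If the exact $K_{2,3}$ ratios cannot be hit, I would introduce a small parameter $\varepsilon$ in block lengths and check $\boldsymbol{\alpha}^TD\boldsymbol{\alpha}>0$ directly.

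The main obstacle is this realization step. It requires both finding structures with the right distance pattern and certifying the lower bounds on distances, since optimal matchings may pair cells unexpectedly. If the $K_{2,3}$ template resists realization, I would fall back on a brute-force search over structures with at most three cells on a grid of cut points, computing $D$ by linear assignment as in Section~\ref{subsec:compute11}. Once a violating $5\times5$ (or similar) matrix is found, I would verify it by hand.
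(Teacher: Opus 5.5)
Your target is the one the paper uses. Its proof exhibits five cell structures on $(0,4)$ whose distance matrix is exactly the $K_{2,3}$ pattern (three structures pairwise at distance $2$, two at distance $2$, all cross distances $1$). It evaluates this matrix at $\boldsymbol{\alpha}=(2,2,2,-3,-3)$ to get $12$, which is your computation up to sign and ordering. The gap is that the proposition is an existence claim whose entire content is the explicit example, and you never produce one. The six-block construction, the $\varepsilon$-perturbation and the brute-force search are all things you \emph{would} try; no cell structures are specified, and no entry of $D$ is computed or bounded. As written, this is a search strategy rather than a proof.

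The missing step is short, and none of your contingencies is needed. On $\Omega=(0,4)$ take the three ``leaves'' $A_1=\{(0,4)\}$, $A_2=\{(0,2),(2,4)\}$, $A_3=\{(0,1),(1,3),(3,4)\}$ and the two ``centers'' $A_4=\{(0,1),(1,4)\}$, $A_5=\{(0,3),(3,4)\}$. Then use Theorem~\ref{thm:dual-equality}, which gives $\metric^1_{\mathrm{DV}}=\mathscr{L}^1(\Omega)-\max_{\partialmatching}\sum_{(E,F)\in\partialmatching}\mathscr{L}^1(E\cap F)$.

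Your worry about certifying lower bounds largely disappears once you look at the signs. In $\boldsymbol{\alpha}^TD\boldsymbol{\alpha}=\sum_{i\neq j}\alpha_i\alpha_jD_{ij}$ the center--leaf coefficients are negative, so for those six pairs you only need $D_{ij}\le 1$. That means exhibiting one matching with total overlap $3$: for instance $(0,2)\mapsto(0,1)$, $(2,4)\mapsto(1,4)$ for $A_2,A_4$, or $(1,3)\mapsto(0,3)$, $(3,4)\mapsto(3,4)$ for $A_3,A_5$.

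Lower bounds $D_{ij}\ge 2$ are needed only on same-side pairs, and there they are immediate:
\begin{itemize}
\item $A_1$ has one cell, so its overlap with $A_2$ or $A_3$ is at most the longest cell length $2$;
\item each cell of $A_2$ meets each cell of $A_3$ in length at most $1$, and $A_2$ has only two cells;
\item both perfect matchings between $A_4$ and $A_5$ have overlap $2$.
\end{itemize}
Hence $\boldsymbol{\alpha}^TD\boldsymbol{\alpha}\ge 36+48-72=12>0$; in fact $D$ is exactly the $K_{2,3}$ matrix. This completes the argument you outlined.
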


\begin{figure}
\center
		\includegraphics[width=0.75\textwidth]{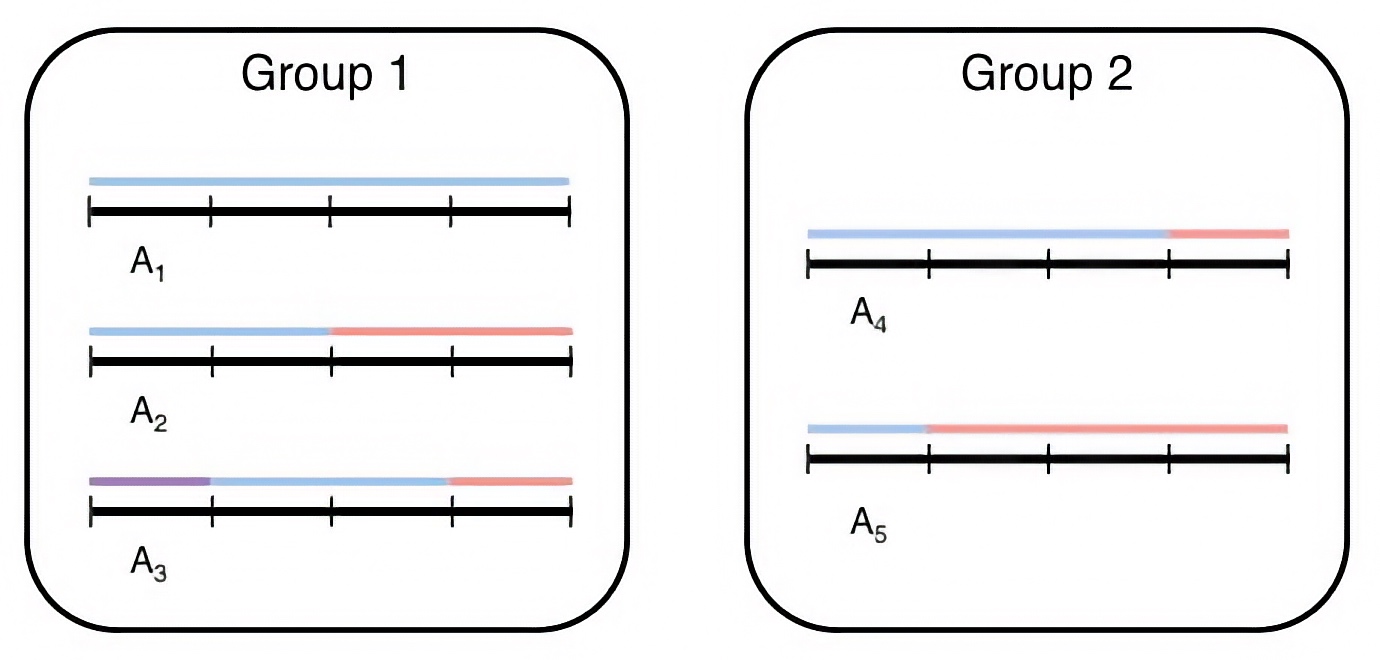} 
\caption{\label{fig:CND} Five cell structures on the interval $\paren{0,4}$.}
\end{figure}

\begin{proof}
The distance matrix 
\[D = \Big(\metric_{\mathrm{V}}^1\paren{A_i, A_j}\Big)_{i,j = 1,\dots, 5} = \begin{pmatrix}
    0 & 2 & 2 & 1 & 1 \\
    2 & 0 & 2 & 1 & 1 \\
    2 & 2 & 0 & 1 & 1 \\
    1 & 1 & 1 & 0 & 2 \\
    1 & 1 & 1 & 2 & 0
\end{pmatrix}\]
for the five cell-structures $A_1, \dots, A_5$ of $\Omega = (0, 4) \subset \mathbb{R}$ depicted in Figure~\ref{fig:CND} is not conditionally negative definite. 
Indeed, for the vector $\boldsymbol{\alpha}^T = (2, 2, 2, -3, -3)$ we get $\boldsymbol{\alpha}^T D \boldsymbol{\alpha} = 12$. 
\end{proof}
A similar example can be constructed when $\Omega$ is the open unit cube in $\mathbb{R}^n$. 
One simply needs to take the cells of each of the five partitions of the open interval in the proof of the previous proposition and take its Cartesian product with $(0, 1)^{n-1}$.

\subsection{A Dual Formulation of $\metric_{DV}^{1}$}\label{subsec:dual}
The following theorem states that the discrete one-to-one volume exchange metric is equal to an equivalent dual formulation:
\begin{equation}\label{def:dual-discrete-volume-exchange-metric}
\metric^{1\ast}_{DV}\paren{\cellstrucP, \cellstrucQ} \coloneq \mathscr{L}^n(\Omega) - \max_{\partialmatching \in \PM(\cellstrucP, \cellstrucQ)} \sum_{(E,F) \in \partialmatching} \mathscr{L}^n(E \cap F).
\end{equation}

\begin{theorem}\label{thm:dual-equality}
Suppose $\cellstrucP$ and $\cellstrucQ$ are cell structures of a common bounded, open, connected domain $\Omega \subset \mathbb{R}^n$.
Then 
\[
\metric_{DV}^1(\cellstrucP, \cellstrucQ) = \metric^{1\ast}_{DV}(\cellstrucP, \cellstrucQ). 
\]
\end{theorem}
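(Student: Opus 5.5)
The plan is to prove the identity matching by matching: for every fixed $\partialmatching \in \PM(\cellstrucP,\cellstrucQ)$ we show
\[
\frac{1}{2}\sum_{(E,F)\in\partialmatching}\mathscr{L}^n(E\triangle F) = \mathscr{L}^n(\Omega) - \sum_{(E,F)\in\partialmatching}\mathscr{L}^n(E\cap F),
\]
with the convention $\ast=\emptyset$. Once this holds for each $\partialmatching$, minimizing the left side and maximizing the right side over the same set $\PM(\cellstrucP,\cellstrucQ)$ give the theorem. The minimum on the left exists by Proposition~\ref{prop:min}. The maximum on the right is then attained at the same matching, because the two objectives differ only by the constant $\mathscr{L}^n(\Omega)$ and a sign.

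For the identity, we use $\mathscr{L}^n(E\triangle F)=\mathscr{L}^n(E)+\mathscr{L}^n(F)-2\mathscr{L}^n(E\cap F)$. This is valid for every pair, including pairs containing $\ast$, where the intersection term vanishes. All quantities are nonnegative and the sums are countable. We may therefore sum over $\partialmatching$ and split into three series, each absolutely convergent.

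Every cell of $\cellstrucP$ appears exactly once in a first coordinate of $\partialmatching$, so $\sum_{(E,F)\in\partialmatching}\mathscr{L}^n(E)=\sum_{E\in\cellstrucP}\mathscr{L}^n(E)=\mathscr{L}^n(\Omega)$, because $\cellstrucP$ partitions $\Omega$ up to null sets. Likewise $\sum_{(E,F)\in\partialmatching}\mathscr{L}^n(F)=\mathscr{L}^n(\Omega)$. Substituting gives $\frac12\bigl(2\mathscr{L}^n(\Omega)-2\sum\mathscr{L}^n(E\cap F)\bigr)$, which is the claimed identity.

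The only delicate point is justifying the rearrangement of the infinite sums when the cell structures have countably many cells. Everything is nonnegative and bounded by $\mathscr{L}^n(\Omega)<\infty$ since $\Omega$ is bounded, so Tonelli's theorem for series applies. In particular, $\sum\mathscr{L}^n(E\cap F)\le\mathscr{L}^n(\Omega)$ because the sets $E\cap F$ over distinct pairs are pairwise disjoint, so subtracting it causes no $\infty-\infty$ issue. I expect no real obstacle beyond this bookkeeping.
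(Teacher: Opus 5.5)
Your proof is correct and follows the paper's route. Like the paper, you prove the identity $\frac{1}{2}\sum_{(E,F)\in\partialmatching}\mathscr{L}^n(E\triangle F)=\mathscr{L}^n(\Omega)-\sum_{(E,F)\in\partialmatching}\mathscr{L}^n(E\cap F)$ for each fixed partial matching, using $\sum_{(E,F)\in\partialmatching}\mathscr{L}^n(E)=\sum_{(E,F)\in\partialmatching}\mathscr{L}^n(F)=\mathscr{L}^n(\Omega)$, and then observe that the max problem is the min problem up to sign and an additive constant. Your explicit justification of the series rearrangement, via nonnegativity and $\mathscr{L}^n(\Omega)<\infty$, is a detail the paper leaves implicit.
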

\begin{proof}

By the definition of a partial matching $\partialmatching\in\PM\paren{\cellstrucP,\cellstrucQ},$ 
\[
\sum_{(E, F) \in \partialmatching} \mathscr{L}^n(E) = \sum_{E \in \cellstrucP} \mathscr{L}^n(E) = \mathscr{L}^n(\Omega) = \sum_{F \in \cellstrucQ} \mathscr{L}^n(F) = \sum_{(E, F) \in \partialmatching} \mathscr{L}^n(F).
\]

Thus, for any partial matching $\partialmatching \in \PM(\cellstrucP, \cellstrucQ)$, 
\[
\begin{aligned}
\mathscr{L}^n(\Omega) - \sum_{(E,F) \in \partialmatching} \mathscr{L}^n(E \cap F)&= \sum_{(E,F) \in \partialmatching} \frac{1}{2}\big(\mathscr{L}^n(E) + \mathscr{L}^n(F)\big) - \mathscr{L}^n(E \cap F)\\
&= \frac{1}{2} \sum_{(E,F) \in \partialmatching} \paren{\mathscr{L}^n(E) - \mathscr{L}^n(E \cap F) + \mathscr{L}^n(F) - \mathscr{L}^n(E\cap F)}\\
&= \frac{1}{2}\sum_{(E,F) \in \partialmatching} \mathscr{L}^n(E\triangle F).
\end{aligned}
\]
Because $d_{DV}^{1\ast}$ can be formulated as a minimization problem, i.e., 
\[
\mathscr{L}^n(\Omega) - \max_{\partialmatching \in \PM(E, F)} \sum_{(E,F) \in \partialmatching} \mathscr{L}^n(E \cap F)
= \min_{\partialmatching \in \PM(\cellstrucP, \cellstrucQ)}\Big\{\mathscr{L}^n(\Omega) - \sum_{(E,F) \in \partialmatching} \mathscr{L}^n(E \cap F)\Big\}, 
\]
we get that $\metric_{DV}^{1\ast}(\cellstrucP, \cellstrucQ) = \metric^{1}_{DV}(\cellstrucP, \cellstrucQ)$. 
\end{proof}

We note that the proof of Theorem~\ref{thm:dual-equality} also says that $\partialmatching^{\mathrm{opt}}$ maximizes $\sum_{(E, F) \in \partialmatching} \mathscr{L}^n\paren{E \cap F}$ over all $\partialmatching \in \PM\paren{\cellstrucP, \cellstrucQ}$ if and only if it minimizes $\sum_{(E,F) \in \partialmatching} \mathscr{L}^n\paren{E\triangle F}$ over all $\partialmatching \in \PM\paren{\cellstrucP, \cellstrucQ}$.

% \section{Computation of the Discrete Volume Exchange Dissimilariaties}

\subsection{Computing $\metric_{\mathrm{DV}}^1$}\label{subsec:compute11}
In the remainder of the paper, we assume that the cell structures have a finite number of cells. We outline a procedure to find the partial matching $R^{\mathrm{opt}}\in \mathrm{PM}(I,J)$ that minimizes the volume exchange using the formulation in Equation \ref{def:dual-discrete-volume-exchange-metric}. 
Towards that end,  we construct a weighted balanced bipartite graph $B=(U,V,E)$ which encapsulates the set of admissible partial matchings. Let $U_1$ include a vertex  $u_i$ for each cell $U_i\in M,$ and let $U_2$ be a collection of ``dummy'' vertices $u_j^{\ast}$ for each cell $V_j$ of $N.$ Set $U=U_1\cup U_2,$ and define $V, V_1,$ and $V_2$ analogously so that $\abs{U}=\abs{V}.$

%We construct a balanced bipartite graph $B=(U,V,E)$ with vertex and edge sets that are related to $M$ and $N$ as follows. Let $U_1 = \{u_i\}_{i \in I}$ be a set of vertices such that each $u_i$ corresponds to a cell $U_i \in M$ and similarly let $V_1 = \{v_j\}_{j \in J}$ for each $V_j \in N$. To ensure that $|U|=|V|$ we additionally include a set of ``dummy"--nodes $U_2 = \{u_j^{\emptyset}\}_{j \in J}$ and $V_2 = \{v_i^{\emptyset}\}_{i \in I}$ and let $U=U_1\cup U_2$ and $V=V_1\cup V_2$.
 
%The edge set $E$ is generated between a pair of vertices if they satisfy one of the two criteria:
$E$ consists of edges of three types:

\begin{itemize}
    \item[(1)] an edge $e_{i}^{\ast}$ with weight $w_{i}^{\ast}=0$ between each vertex $u_i\in U_1$ and the corresponding dummy vertex $v_{i}^{\ast}\in V_2$;
    \item[(2)] an edge $\hat{e}_{j}^{\ast}$ with weight $\hat{w}_{j}^{\ast}=0$ between each vertex $v_j\in V_1$ and the corresponding dummy vertex $u_{j}^{\ast}\in U_2$;
    \item [(3)]an edge $e_{ij}$ is included between vertices $u_i\in U_1$ and $v_j\in V_1$ when their corresponding cells overlap, with weight $w_{ij} = |U_i \cap V_j|$.
\end{itemize}

%Intuitively, edges generated by (1) encode the cost of including/collapsing a cell from the cellular structure. Furthermore, because $\metric_{DV}^{1^\ast}$ can be formulated as a minimization problem presented in Theorem \ref{thm:dual-equality}, it is sufficient to consider the set of overlapping cells as they provide an upper bound on $\metric_{DV}^{1^\ast}$. Condition (2) encodes these admissible pairings, reducing the size of the search space.

%The computation of $\metric_{DV}^{1^\ast}$ is simplified to computing a linear assignment problem on the cost matrix (i.e., the weighted adjacency matrix of $B$). The time complexity of computing the metric under this construction is $\mathrm{O(max}\{|I|,|J|\}^3)$ and can be computed using classical algorithms such as the 

Then
$$\metric_{\mathrm{DV}}^{1^\ast}\paren{M,N}= \abs{\Omega}-\max_{\mathbf{x}} \sum_{i,j=1}^{|U|} \mathrm{Adj}(B)_{ij}\mathbf{x}_{ij}$$ where $\mathrm{Adj}(B)$ is the weighted adjacency matrix of $B$ (where non-edges have weight zero) and $\mathbf{x}_{ij}$ is a binary decision variable with the constraint $\sum_{j=1}^{|U|}\mathbf{x}_{ij}=1$ for all $1\leq i\leq |U|$, similarly for $j$.
The right hand side is a  linear assignment problem that can be solved using classical methods such as the Hungarian algorithm or the Jonker--Volgenant--Castanon algorithm \cite{edmonds1972theoretical, jonker1987shortest, kuhn1955hungarian}, which have time complexity $\mathrm{O(max}\{|I|,|J|\}^3).$

% one must consider the symmetric difference between it and its neighboring cells $\{V_{j_{1}},\dots,V_{j_k}\} \subseteq N$. Each choice of pairing sequentially alters the available cells for the next. The resulting search on the graph $B$ is equivalent to matching a rook polynomial to the graph, a problem that is generally $\mathrm{NP}-$hard \cite{godsil1981theory, valiant1979complexity}. 

\subsection{Computing $\metric_{DV}^2$}\label{subsec:computemm}
% We begin by reminding the reader the definition of independent matchings of cell structures. 
% \begin{definition}\label{def:independent-matching}
% Let $q=\paren{V_1,E_1}$ and $G_2=\paren{V_2,E_2}$ be graphs. An \textbf{independent matching} between $V_1$ and $V_2$ is a subset $R$ of $(V_1\cup \{\emptyset\}) \times (V_2 \cup \{\emptyset\})$ such that  
% \begin{enumerate}
% \item[(a)] the canonical projections $R \to V_1$ and $R \to V_2$ are onto, and 
% \item[(b)] $R \cap (V_1\times V_2)$ is independent in the sense that if $\paren{v_1,w},\paren{v_2,w}\in R \cap (V_1 \times V_2)$ then $v_1$ and $v_2$ are not connected by an edge of $q,$ and if $\paren{v,w_1},\paren{v,w_2}\in R \cap (V_1 \times V_2)$ then $w_1$ and $w_2$ are not connected by an edge of $G_2.$  
% \end{enumerate}
% We denote the collection of independent matchings between $I$ and $J$ as $\IM(I,J)$.
% \end{definition}

We derive the set of independent matching between a pair of cellular structures from the adjacency graph on their cells. The collection of independent matching between cellular structures $M$ and $N$ is in general much larger than the class of partial matchings between them. 
%Unlike the prior metric, the search space for the optimal minimizer $R^{\mathrm{opt}} \in \IM(M,N)$ is significantly larger. 
In particular, the matching realizing $\metric^{2}_{DV}(M,N)$ minimizes the symmetric difference between \textit{subcollections} of non-adjacent cells; the previous metric only permits matchings between individual cells. We outline a procedure to find that optimal matching $R^{\mathrm{opt}} \in \IM(I,J).$ % \in \text{IM}(M,N)$ that minimizes $\metric^{2}_{DV}(M,N)$. 

Suppose $\sigma \subset I$ and let $M^{\sigma}$ denote a subcollection of cells indexed by $\sigma$. Let $\mathbf{x} \in \set{0,1}^{|I|}$ be a binary integer vector. We say that $\mathbf{x}^{\sigma}$ corresponds to $M^{\sigma}$ if $\mathbf{x}^{\sigma}_i=\chi_{U_i\in M^{\sigma}}.$ Suppose $\mathbf{x}^{\sigma}$ and $ \mathbf{y}^{\tau}$ correspond to subcollections $M^\sigma$ and $N^{\tau}$ and let $\mathbf{Z}^{\sigma\tau} \in \R^{|I|\times|J|}$ denote their outer product. 
Consider the real vectors $\bm{\alpha} \in \R^{|I|}$ and $\bm{\beta} \in \R^{|J|}$ such that $\bm{\alpha}_i = |U_i|$ and $\bm{\beta}_j = |V_j|$ and let $\bm{\Gamma} \in \R^{|I|\times|J|}$ be a real matrix such that $\bm{\Gamma}_{ij} = |U_i \cap V_j|$.  
By construction $|M^\sigma| = \bm{\alpha}^\top \mathbf{x}^{\sigma}$, $|N^\tau| = \bm{\beta}^\top \mathbf{y}^{\tau}$, and \[\mathrm{tr}( \bm{\Gamma}^\top \mathbf{Z}^{\sigma\tau}) = \sum_{\substack{U_i \in M^\sigma \\ V_j \in N^\tau }}|U_i\cap V_j|.\] As such, if $M^\sigma$ and $N^{\tau}$ are subcollections paired by an independent matching $R$, then their symmetric difference equals \[|M^\sigma \triangle N^{\tau}| = \bm{\alpha}^\top \mathbf{x}^{\sigma} + \bm{\beta}^\top \mathbf{y}^{\tau} - 2  \mathrm{tr}( \bm{\Gamma}^\top \mathbf{Z}^{\sigma\tau})\] and   
\begin{equation}
\label{eqn:dv_many_to_many}\metric_{DV}^2(M,N) = \frac{1}{2}\min_{ R \in \IM(M,N)}\sum_{ \paren{\sigma,\tau} \in R} \bm{\alpha}^\top \mathbf{x}^{\sigma} + \bm{\beta}^\top \mathbf{y}^{\tau} - 2  \mathrm{tr}( \bm{\Gamma}^\top \mathbf{Z}^{\sigma\tau}).
\end{equation}

% If $\mathbf{x}^{\sigma}$ and $\mathbf{y}^{\tau}$ correspond to subcollections $M^{\sigma}$ and $N^{\tau}$ respectively and set $\mathbf{z}^{\sigma \tau}= \mathbf{x}^{\sigma} (\mathbf{y}^{\tau})^\top$

$R^{\mathrm{opt}}$ can be determined by finding the optimal minimizing decision vectors $\mathbf{x} \in \{0,1\}^{|I|}$ and $\mathbf{y} \in \{0,1\}^{|J|}$ subject to the constraint that the corresponding subcollections consist of nonadjacent cells. In its current form, the many-to-many volume exchange dissimilarity is presented as a binary integer quadratic program, and the general class of binary integer quadratic programs is $\mathrm{NP}-$complete. To reduce the burden of computing $\metric_{DV}^2$ a \textit{branch--cut} algorithm is used to prune the search space.
This is achieved by (1) approximating the solution using a faster quadratic program and (2) tightening the bounds to be integer valued using a branch-and-bound and cutting planes algorithm.
Additional details regarding the branch--cut algorithm and its implementation can be found in \cite{padberg1991branch}.

In the next section, we only compute  $\metric_{DV}^2$ for small sample windows taken from a larger cell structure.
These windows typically contain $\sim 50$ cells or less, and the algorithm described here yields reasonable runtimes.
%See Section~\ref{sec:comp_exp} below for further detail regarding the runtime of the volume exchange algorithms.

\section{Computational Experiments}
\label{sec:comp_exp}

% Comments: Add more depth about dataset, and Dylan's paper. Add Discussion of figures

With the theoretical description of the volume exchange dissimilarities established, this section shows their effectiveness in computational experiments.
Our approach is a modification of an earlier study~\cite{miley2025microstructure} which proposed a practical framework for the comparison of polycrystalline microstructures.
The key idea was to represent a microstructure as a family of window probability distributions parameterized by the window size. % of windows of local configurations at various scales.
Windows were computationally represented as voxelized two-dimensional cell structures with an associated measure having a value of one on cell boundary voxels and zero on cell interior voxels.
A window distance was defined as the unbalanced Wasserstein distance between the associated measures using the taxicab distance on the underlying voxel space; this is referred to as the grain boundary distance below.
For a given window size, an empirical window distribution was sampled from each of the microstructures being compared.
A microstructure distance was then defined as the Wasserstein distance between the associated empirical window distributions using the grain boundary distance on the underlying window space.
% A different distance called the grain boundary distance on voxelized cell structures in two dimensions is defined  (an unbalanced Wasserstein distance between measures assigning the same weight to each voxel in the boundary), and cell structures in a larger ambient space are compared by viewing samples of windows as empirical probability distributions and taking the Wasserstein distance between them. 
This framework was applied to a dataset of synthetic microstructures, and was shown to be able to distinguish microstructures belonging to different classes and to match sampled windows to the most likely originating microstructures.
% the Wasserstein distance between probability distributions of windows is demonstrated to distinguish between different cell structure classes and the grain boundary distance enables a small sample of windows to be correctly matched to the cell structure they were sampled from.

Our approach here follows that of Ref.\ \cite{miley2025microstructure} but with (i) the grain boundary distance on windows replaced by the volume exchange metric and volume exchange dissimilarity and (ii) the Wasserstein distance on empirical window distributions replaced by the energy distance (defined below).
% The volume exchange distance has several advantages over the grain boundary distance, including being much more computationally efficient.
The use of the energy distance also effectively resolves an issue reported in Ref.\ \cite{miley2025microstructure} of there being a non-zero self-distance due to the stochasticity of the empirical window sampling.
A more comprehensive computational study comparing the effectiveness of the volume exchange dissimilarities and the grain boundary distance is intended to be the subject of a forthcoming companion paper.

We use the same set of three-dimensional microstructures as in Ref.\ \cite{miley2025microstructure}.
Figure \ref{fig:microstructures} shows representative two-dimensional sections of the five microstructures.
EI is considered the base equiaxed structure, with the other four deviating from EI in ways that correspond to realizable physical processes.
The larger grain size of EII could be formed by an isothermal anneal, and the large grains in BII could indicate a process of abnormal grain growth.
The grains of RI are highly anisotropic as would occur during a rolling process, and BI contains small grains that could be inclusions from a precipitation reaction.
% A visual of the dataset we will be working with can can be found in figure (\ref{fig:microstructures}).
% From that figure, you will notice that each microstructure has a distinct morphological pattern.
% EI and EII are equiaxed microstructures with different average grain sizes.
% BI and BII are bimodal microstructures that share the same primary structure as EI but have a secondary structure with a different average grain size.
% RI has the same average grain size as EI but has a clear anisotropic morphology.
These represent five natural and geometrically distinct microstructure classes, and the purpose of our computational experiments is to show that the volume exchange dissimilarities can be used to reliably distinguish microstructures of different classes.
% In the previous sections we have provided the theoretical foundation of the volume exchange dissimilarities along with a comprehensive overview of the algorithms used to compute them. Let us now go over a practical application for the volume exchange dissimilarities to be able to distinguish between different microstructures that have visually distinct morphologies. 

The sampling procedure to construct the empirical window distribution is adapted from Ref.\ \cite{miley2025microstructure}.
Starting with a three-dimensional microstructure in a large cube, two-dimensional microstructures are constructed by intersecting the cube with planes in uniform increments along a single axis, with the increment large enough to minimize the possibility of correlations.
While three-dimensional synthetic cell structures could be represented using samples of three-dimensional windows, real data is most often found in the form of two-dimensional slices.
Square windows of a fixed width chosen to be 20 pixels  are sampled uniformly from the two-dimensional slices, rejecting windows that overlap with one of the previously sampled windows.

%Window sampling procedure:

%\begin{itemize} 
%\item From a 3D cubic volume of a microstructure, take sequential planar level sets along a single axis at uniform increments such that grains in one level set are not part of the next.
%\item Sample windows uniformly from a given level set and resample if a window overlaps with one of the previously sampled windows.
%\end{itemize}

%We prevent overlap in windows to ensure as little correlation between sampled windows. Intuitively, these windows act as the support to an underlying probability distribution on a metric space, we call the {\em window space}, given by a metric between windows. The authors in that \cite{miley2025microstructure} used their own dissimilarity between windows they define as the `grain boundary distance'. They would then compute a dissimilarity between window distributions via the Wasserstein Distance. In our experiment, we swap out their use of the grain boundary distance for our volume exchange dissimilarities.

%\subsection{Experiment}\label{lab:experiment}
After sampling windows from the five classes of synthetic microstructures, we compare the sets of sampled windows using the energy distance \cite{szekely2013energystat} which is commonly used in statistical literature and applications, see \cite{mak2018annals, ziel2021energy}.

%To compare distributions of windows, we chose to adopt a metric that has been well studied in statistical literature  known as the {\bf energy distance} \cite{szekely2013energystat}.

\begin{figure}
    \centering
    \includegraphics[width=\textwidth]{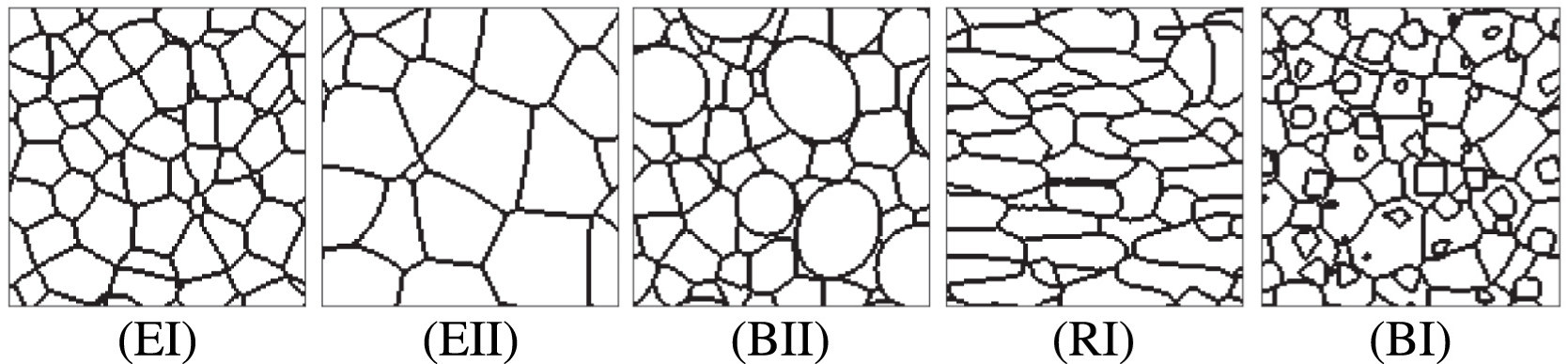}
    \caption{Representative areas ($80\times80$ pixels) of microstructures generated in DREAM.3D after Ref.\ \cite{miley2025microstructure}.
    From left to right, EI is the base equiaxed structure, EII is equiaxed with large grains, BII is the base structure with large inclusions, RI is rolled with highly anisotropic grains, and BI is the base structure with small inclusions.
    % Micrographs are 80 pixels on a side.
    }
    \label{fig:microstructures}
\end{figure}

\begin{definition}
Let $\paren{Z,d_Z}$ be a metric space and let $\mu$ and $\nu$ be probability measures on $Z$.
The {\bf\em energy distance} between $\mu$ and $\nu$ is
%Let $X, X'$ and $Y, Y'$ be random windows distributed according to probability distributions $\mu$ and $\nu$, respectively. 
%We define the {\bf\em energy distance} between $\mu$ and $\nu$ as the square root of Definition~\ref{def:theoretical-energy-distance}.
\begin{equation}\label{eq:theoretical-energy-distance}
    D(\mu, \nu) = \sqrt{2\mathbb{E}_{\mu\times \nu}[\metric_{Z}(X, Y)] - \mathbb{E}_{\mu\times\mu}[\metric_Z(X,X')] - \mathbb{E}_{\nu\times\nu}[\metric_Z(Y,Y')}]
\end{equation}
where $X,X' \sim \mu$ and $Y,Y' \sim \nu.$ 
\end{definition}

The energy distance is a metric on probability distributions on $\paren{Z,\metric_Z}$ if and only if $\metric_Z$ has strong negative type, which is stronger than requiring that $\metric_Z$ is conditionally negative definite.\cite{lyons2013distance} 
However, by Prop.\ \ref{prop:negative_definite} the one-to-one volume exchange metric is {\em not} conditionally negative definite and by Prop.\ \ref{prop:dissimilarity} the many-to-many volume exchange dissimilarity is not even a metric.
As such, the energy distance is not theoretically guaranteed to be a metric nor even be non-negative.
However, the use of Szekely's two sample energy statistic \cite[Definition 6.1]{szekely2013energystat} produces excellent energy distance results in practice.

The energy distance is estimated by letting $\mathrm X = \{x_1,\cdots,x_n\}$ and $\mathrm Y = \{y_1,\cdots,y_n\}$ be windows sampled by the procedure described above.
We first compute a pairwise dissimilarity matrix between the two collections of windows using one of the two volume exchange dissimilarities defined above.
An example of such a pairwise dissimilarity matrix is given in Fig.\ \ref{fig:energy-distance-blocks} where the $\metric_{DV}^1$ is evaluated between the window distributions for EI and EII.
From these pairwise distances, the following V-statistics are defined which represent the average dissimilarities between pairs of samples: 
\begin{align}
\mu_{\mathrm X \mathrm Y} &= \frac{1}{n^2}\sum_{i=1}^n\sum_{j=1}^n\metric_{DV}(x_i,y_j) \\\mu_{\mathrm X} &= \frac{1}{n^2}\sum_{i=1}^n\sum_{j=1}^n\metric_{DV}(x_i,x_j) \\
\mu_{\mathrm Y} &= \frac{1}{n^2}\sum_{i=1}^n\sum_{j=1}^n\metric_{DV}(y_i,y_j)
\end{align}
%$\mu_{\mathrm{XY}}$ represents the average dissimilarity of windows from $\mathrm X$ and $\mathrm Y$. $\mu_{\mathrm X}$ and $\mu_{\mathrm Y}$ represents the average dissimilarity of windows in $\mathrm X$ and $\mathrm Y$, respectively.
% We then estimate the squared 
The energy distance is then estimated from Eq.\ \ref{eq:theoretical-energy-distance} as
% \begin{equation}\label{eq:empirical-energy-distance}
%     \frac{1}{n^2}\left[\sum_{i,j=1}^n2\metric_{DV}(x_i,y_j) - \metric_{DV}(x_i,x_j)-\metric_{DV}(y_i,y_j) \right]
% \end{equation}
\begin{equation}\label{eq:empirical-energy-distance}
    \varepsilon(\mathrm X,\mathrm Y) =  \sqrt{2\mu_{\mathrm{XY}} - \mu_\mathrm X - \mu_\mathrm Y}
\end{equation}

\begin{figure}
    \centering
    \includegraphics[width=0.75\textwidth]{heatmaps3.png}
    \caption{Pairwise dissimilarity matrix for 40 $2D$ windows with the first 20 windows (rows) sampled from EI and the second 20 sampled from EII, with the volume exchanges between two pairs of windows shown in gray on the sides. 
    % The first 20 rows/columns represent windows sampled from EI while the remaining rows/columns represent windows sampled from EII.
    Numerical values are the $\metric_{DV}^1$ volume exchange distance between the corresponding windows and are normalized by window area to be bounded between 0-1.
    Notice the resemblance to a block matrix with different average dissimilarity values as indicated by the red lines.
    The empirical energy distance in Eq.\ \ref{eq:empirical-energy-distance} effectively finds the difference between mean values of the diagonal and off-diagonal blocks.}
    \label{fig:energy-distance-blocks}
\end{figure}

Figure~\ref{fig:heatmaps} illustrates the pairwise energy distance estimates for samples of 1200 windows from the five synthetic microstructures. 
% +: Many-many is more(?) physically motivated, 
% -: M-M not a metric, expensive to compute
EII is the furthest from all other microstructures except $RI$, and this makes sense since EII has substantially larger grains and the volume exchange dissimilarities are sensitive to variations in cell area.
In contrast, observe that BII is the closest non diagonal entry to EI, which is likely due to both microstructures having the same base cell-structure while BII's large inclusions slightly drive up the average distance. 
However, notice that BI also shares the same primary structure as EI but has a significantly higher energy distance to EI relative to BII. 
This likely stems from the average cost to match a cell in EI to a cell in BI being inflated by the area of the inclusions that are embedded within the BI base grains.
Finally, the nonzero energy distance between EI and RI likely occurs because cells from EI must compress and stretch to match a cell in RI.

% This motivates, the energy distance between EI and RI is well suited for capturing differences in morphological anisotropy. 

One can notice that both the one-to-one volume exchange metric and many-to-many volume exchange dissimilarity yield similar heatmaps in Fig. \ref{fig:heatmaps}. 
This motivates the use of the one-to-one volume exchange metric as a computationally cheaper alternative to the many-to-many volume exchange dissimilarity.  

\begin{figure}
    \centering
    \includegraphics[width=1\textwidth]{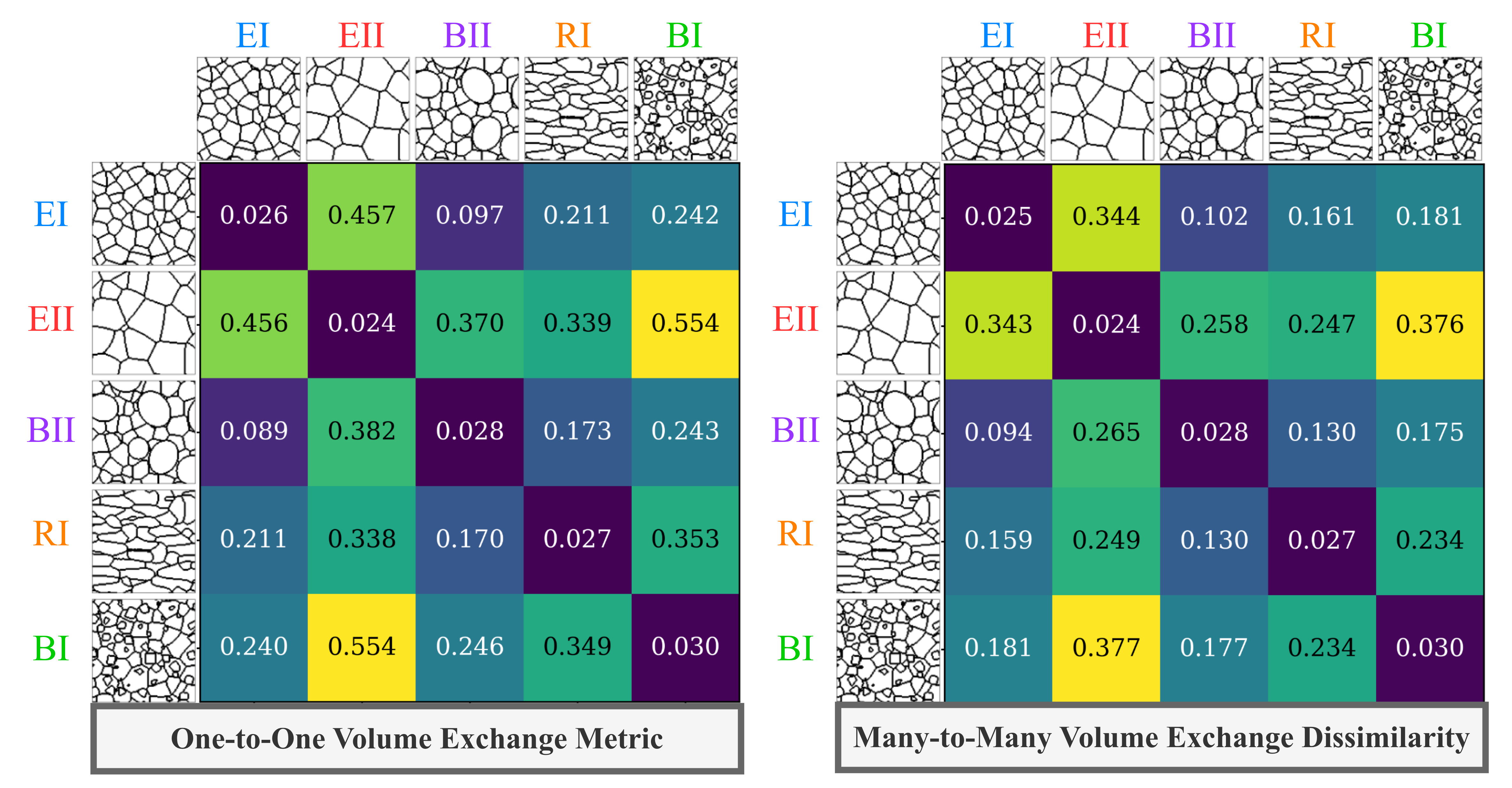}
    \caption{Dissimilarity matrices between window distributions.
    Empirical $2D$ window distributions were generated by sampling 1200 windows uniformly at random from the respective micrographs without replacement.
    Entries were computed using the energy distance in Eq.\ \ref{eq:empirical-energy-distance}.
    The left matrix uses $\metric_{DV}^1$ while the right matrix uses $\metric_{DV}^2$.
    Notice that both matrices satisfy the triangle inequality and that entries along the diagonal are consistently smaller than non-diagonal entries.
    \label{fig:heatmaps}}.
\end{figure}

Figure~\ref{fig:convergence-plots} shows the convergence of the energy distance statistics.  
%To validate the numerical stability of the dissimilarity matrices, we also computed convergence plots in figure (\ref{fig:convergence-plots}) where we plot the energy distances as a function over the number of sampled windows. 
Notice that as the number of samples is increased the relative ordering of energy distances remains stable even for small samples of windows while the self-distance decreases towards zero. Also, it appears that the limiting distance values satisfy the triangle inequality. 
% This stability naturally stems from the estimated expectations converging as our sample size increases.
% This supports the use of the combined volume exchange-energy distance for comparison of cell structures. 
The fact that self energy distances are less than all other distances support the use case of the volume exchange informed energy distance as a way to create  hypothesis tests for microstructure validation procedures. 
Finally, since the relative ordering of the microstructure distances are stable, one does not need to sample a high number of windows, like we do, in practice.

We also compared 3D windows sampled from the synthetic microstructures using the same pipeline. Only $\metric_{DV}^1$ was computed, as $\metric_{DV}^2$ was slow to compute.  These results can be found in Fig. \ref{fig:3d-vol-exchange}. The largest visual discrepancy between the results using 2D windows versus 3D windows comes from the pairwise distances of micrographs between RI and the remaining structures. This likely stems from 3D windows capturing RI's geometric anisotropy from all three axes.

%in the subplots that the ordering of the energy distance between distributions are stable throughout different sample sizes while the distance to one's own distribution converges to 0. This supports the use case of the energy distance utilizing volume exchange dissimilarities for applications in machine learning (classification, clustering, etc.).

% \begin{figure}
%     \centering
%     \includegraphics[width=1\textwidth]{convergenceplots2.png}
%     \caption{Subplots showing the energy distance from the indicated microstructure distribution to every microstructure distribution including itself.
%     % The energy distance from the respective distribution as a function of the number of samples.
%     The top row uses $\metric_{DV}^1$ and the bottom row uses $\metric_{DV}^2$.
%     Notice that the energy distance maintains a consistent ordering for at least 100 sampled windows.
%     Moreover, the distance appears to converge to zero only for self-distances.
%     }
%     \label{fig:convergence-plots}
% \end{figure}

% \begin{figure}[h]
%     \centering
%     \includegraphics[width=1\textwidth]{vertical_convergence_plots.png}
%     \caption{}
%     \label{fig:placeholder}
% \end{figure}

\begin{figure}
    \centering
    \includegraphics[width=0.75\textwidth]{compressedConvergencePlots.png}
    \caption{Subplots showing the energy distance from the indicated microstructure distribution to every microstructure distribution including itself.
    % The energy distance from the respective distribution as a function of the number of samples.
    The top plots use $\metric_{DV}^1$ and the bottom plots use $\metric_{DV}^2$.
    Each data point represents the average of 5 energy distances computed across different sets of 2D windows. 
    Error bars indicate $\pm 1$ standard deviation.
    Notice that the energy distance maintains a consistent ordering for at least 100 sampled windows.
    Moreover, the distance appears to converge to zero only for self-distances.}
    \label{fig:convergence-plots}
\end{figure}

\begin{figure}
    \centering
    \includegraphics[width=0.9\textwidth]{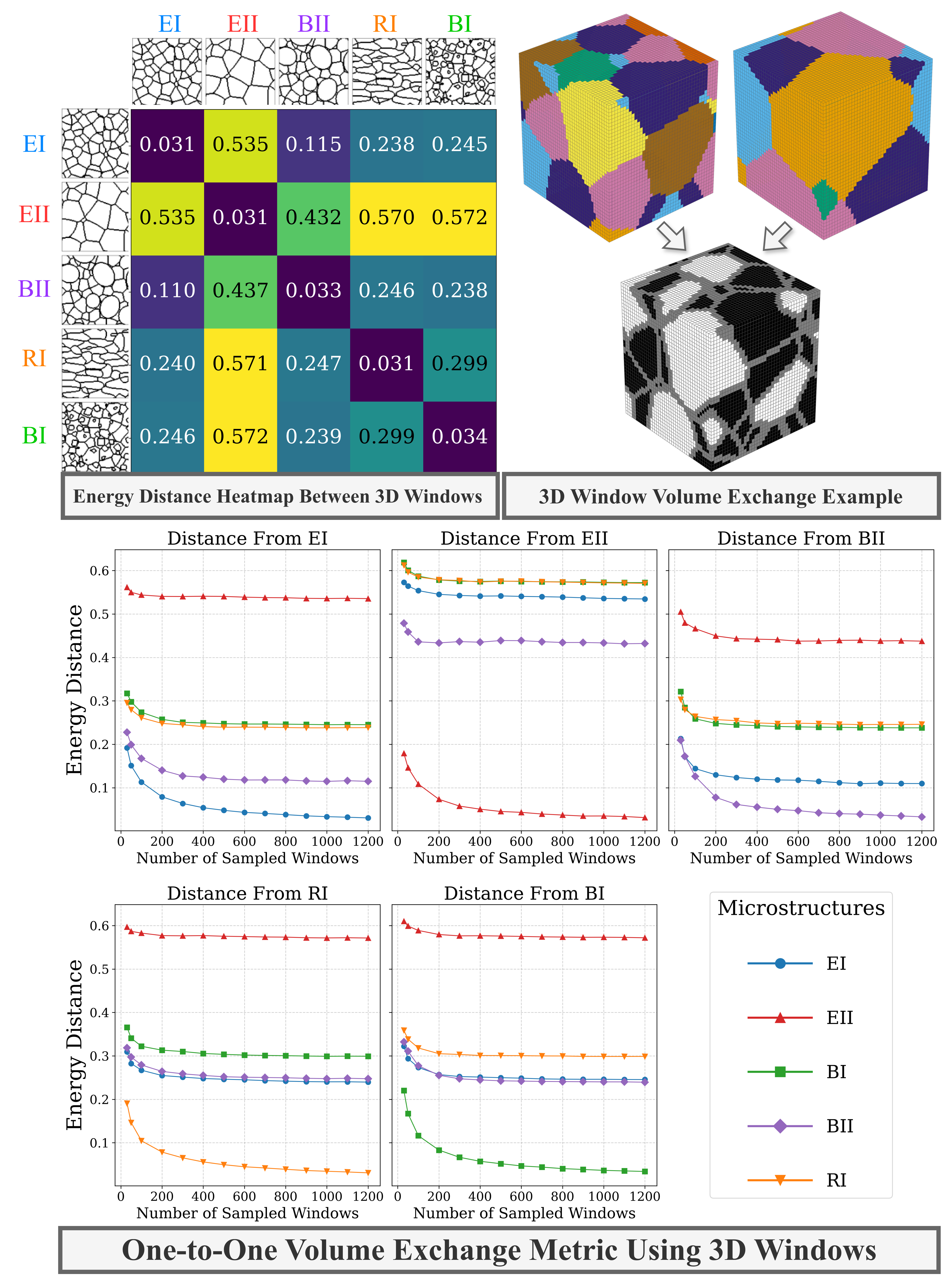}
    \caption{{\bf Top left:} energy distance heatmap using $\metric _{DV}^1$ between 3D window distributions. 1200 windows of dimension $20\times 20\times 20$ were sampled uniformly at random from the respective micrographs without replacement. {\bf Top right:} a visualization of $\metric_{DV}^1$ between 3D windows sampled from EI and EII. Note that for visualization purposes, the 3D windows were scaled uniformly to $40\times 40\times 40$ and an overlay of the union of the grain boundaries between the two windows is shown in gray. {\bf Bottom:} Convergence plots of the energy distance using $\metric_{DV}^1$ on 3D window distributions.}
    \label{fig:3d-vol-exchange}
\end{figure}

The code and data used to generate the Figs. \ref{fig:heatmaps}, \ref{fig:convergence-plots}, and \ref{fig:3d-vol-exchange} can be found on \href{https://github.com/jjung88/volume-exchange}{GitHub} along with additional documentation regarding the implementation.

% \begin{itemize}
% \item Figure on convergence curves for the Wasserstein distance on the 1-1 and many-to-many metrics. 
% \item The slow convergence was motivation to look at the Energy distance. 
% This slow convergence can be explained by Weed and Bach (Sharp asymptotic and finite-sample rates of convergence of empirical measures in Wasserstein distance). 
% Cite and talk about Theorem 1 in the paper (section 4.3). 
% Possibly(?) talk about the covering dimension $d_\ast$. 
% Covering numbers are hard to compute. 
% \item Do we want to estimate the dimension of our microstructures? 
% \item Figures (2) for Energy distance (1-1 and many-to many)
% \item Figure for heat maps (Wasserstein, energy, 1-1, many-to-many)
% With the heat maps it is easier to see that EII is further away on average to the rest of the structures. (look at EII row). 
% In contrast, the EI row is smallest on average. 
% \item Explaining more science?
% \end{itemize}

\section*{LLM Usage Disclosure} 

Any use of generative AI in this manuscript adheres to ethical guidelines for use and acknowledgment of generative AI in academic research \cite{porsdam2024guidelines}. Each author has made a substantial contribution to the work, which has been thoroughly vetted for accuracy, and assumes responsibility for the integrity of their contributions. LLMs were used to implement the code to compute the integer program in Equation~\ref{eqn:dv_many_to_many}, to generate batch scripts to parallelize the computation of pairwise distances, generate Tikz code for Figure \ref{fig:many-to-many-failure}, and to modify Figure~\ref{fig:volume_flux} to create Figure~\ref{fig:volume_flux_manymany}. They were also used to proofread a draft of the manuscript and check for mathematical errors. LLMs were not otherwise used at any stage of the theoretical work or the writing process. 

\section*{Acknowledgments}
S.P. and B.S.\ are grateful for the support of the National Science Foundation under Award No.\ 2232967.
J.K.M.\ is grateful for the support of the National Science Foundation under Award No.\ 2232968.

% Simple scripts were generated in Gemini to parallelize computation of the pairwise Volume Exchange dissimilarity matrices like in Fig. (\ref{fig:energy-distance-blocks}) across multiple CPU nodes on a HPC cluster.

% =======================================

% \bibliographystyle{plain}
\bibliographystyle{unsrtnat}
\bibliography{refs}

\end{document}